\numberwithin{equation}{section}
  \newcolumntype{R}{>{\raggedleft\arraybackslash}X}
  \newcolumntype{L}{>{\raggedright\arraybackslash}X}
  \newcolumntype{C}{>{\centering\arraybackslash}X}
  \newcommand{\IQ}{\ensuremath\mathds{Q}}
  \newcommand{\IR}{\ensuremath\mathds{R}}
  \newcommand{\IP}{\ensuremath\mathds{P}}
  \newcommand{\IV}{\ensuremath\mathds{V}}         
\renewcommand*{\vec}[1]{{\boldsymbol{#1}}}
\DeclareMathAlphabet{\mathbfsf}{\encodingdefault}{\sfdefault}{bx}{n}
\newcommand*{\setE}{\ensuremath{\mathcal{T}}}    
\newcommand*{\transpose}[1]{{#1}^\top}                     
    \renewcommand*{\div}{\vec{\nabla}\cdot}                  
\newcommand{\IZ}{\ensuremath\mathds{Z}} 
\newcommand{\exte}{{\rm{ext}}}
\newcommand{\inte}{{\rm{int}}}
\newcommand{\wB}{\omega^{\mathrm{B}}}
\newcommand{\wI}{\omega^{\mathrm{I}}}
\newcommand{\quand}{\quad \text{and} \quad}
\newcommand{\dd}{\mathrm{d}}
\newcommand{\DG}{\mathrm{DG}}
\newcommand{\dxDG}{\nabla^\DG\cdot}
\newcommand{\dxl}{\nabla^\mathrm{loc}\cdot}
\newcommand{\rr}{s}
\theoremstyle{definition}
\newtheorem{THM}{Theorem}[section] 
\newtheorem{PROP}[THM]{Proposition}
\newtheorem{LEM}[THM]{Lemma}
\theoremstyle{remark}
\newtheorem{remark}{Remark}
\newcommand{\revone}[1]{#1}
\newcommand{\revtwo}[1]{#1}
\newcommand{\revthr}[1]{#1}
\newenvironment{revthrblock}
  {}
\title{A bound-preserving Runge--Kutta discontinuous Galerkin method with compact stencils for hyperbolic conservation laws}
\author{
Chen Liu\footnote{Department of Mathematical Sciences, University of Arkansas, Fayetteville, AR 72701. Email: chenl@uark.edu.},~ 
Zheng Sun\footnote{Department of Mathematics, The University of Alabama, Tuscaloosa, AL 35487. Email: zsun30@ua.edu. Z. Sun is  supported by NSF DMS-2208391.},~
Xiangxiong Zhang\footnote{Department of Mathematics, Purdue University, West Lafayette, IN 47907. Email: zhan1966@purdue.edu. X. Zhang is  supported by NSF DMS-2208518. }
} 
\date{}
\newcommand{\cF}{\mathcal{F}}
\newcommand{\cG}{\mathcal{G}}
\newcommand{\cL}{\mathcal{L}}
\newcommand{\ns}{m}
\newcommand{\al}{a_0}
\newcommand{\nlK}{{l(K)}}
\begin{document}

\maketitle 

\noindent 
\textbf{Abstract:} 
In this paper, we develop bound-preserving techniques for the Runge--Kutta (RK) discontinuous Galerkin (DG) method with compact stencils (cRKDG method) for hyperbolic conservation laws. The cRKDG method was recently introduced in [Q.~Chen, Z.~Sun, and Y.~Xing, \emph{SIAM J. Sci. Comput.}, 46: A1327--A1351, 2024]. It enhances the compactness of the standard RKDG method, resulting in reduced data communication, simplified boundary treatments, and improved suitability for local time marching. This work improves the robustness of the cRKDG method by enforcing desirable physical bounds while preserving its compactness, local conservation, and high-order accuracy. Our method is extended from the seminal work of [X.~Zhang and C.-W.~Shu, \emph{J. Comput. Phys.}, 229: 3091--3120, 2010]. We prove that the cell average of the cRKDG method at each RK stage preserves the physical bounds by expressing it as a convex combination of three types of forward-Euler solutions. A scaling limiter is then applied after each RK stage to enforce pointwise bounds. Additionally, we explore RK methods with less restrictive time step sizes. Because the cRKDG method does not rely on strong-stability-preserving RK time discretization, it avoids its order barriers, allowing us to construct a four-stage, fourth-order bound-preserving cRKDG method. Numerical tests on challenging benchmarks are provided to demonstrate the performance of the proposed method. \\

\noindent 
\textbf{Keywords}: Runge--Kutta discontinuous Galerkin method; compact stencils; bound-preserving; hyperbolic conservation laws; compressible Euler equations

\section{Introduction}
In this paper, we design bound-preserving techniques for the Runge--Kutta (RK) discontinuous Galerkin (DG) method with compact stencils (cRKDG method) for hyperbolic conservation laws introduced by Chen, Sun, and Xing in \cite{chen2024runge}. The proposed bound-preserving cRKDG method preserves the invariant region of the equation systems while maintaining the compactness, local conservation, and high-order accuracy of the solver. 
\par
We consider hyperbolic conservation laws in general form with an unknown $u:\Omega\times [0,T]\to \IR^m$, defined over a spatial domain $\Omega$ and a time interval $[0,T]$: 
\begin{equation}\label{eq:claws}
\partial_t u + \nabla \cdot f(u) = 0, \quad x \in \Omega \subset \IR^d, \quad t\in [0,T]. 
\end{equation}
Here $f = [f_1, f_2, \cdots, f_d]\!: B\subset \IR^m\to \IR^{m\times d}$ is the flux function that may be scalar-, vector- or matrix-valued, depending on the context.
The solution to \eqref{eq:claws} typically admits certain physical bounds and resides within a convex invariant region $B$ (also called an admissible set).  
Numerical methods that fail to preserve such bounds can result in nonphysical solutions, which may make the problem ill-posed and lead to instability during simulations.
For instance, when negative internal energy emerges, the linearized compressible Euler equations are no longer hyperbolic, and the associated initial value problem is ill-posed \cite{zhang2010positivity}.  Such violations of physical bounds in numerical solutions pose significant practical challenges. They have attracted considerable attention both for hyperbolic conservation laws \cite{zhang2010maximum,xu2014parametrized,guermond2019invariant,wu2023geometric} and in other similar contexts \cite{huang2018bound,chuenjarern2019high,sun2019entropy,du2023oscillation,liu2024optimization,liu2024simple}. This paper aims to address these issues for the cRKDG method for \eqref{eq:claws}.
\par
The DG method is a class of finite element methods that employs discontinuous piecewise polynomial spaces. It was first introduced by Reed and Hill in the 1970s for solving the steady-state transport equation \cite{reed1973triangular}. In subsequent pioneering works by Cockburn et al. \cite{rkdg1,rkdg2,rkdg3,rkdg4,rkdg5}, the DG spatial discretization was combined with the RK time stepping method for solving the hyperbolic conservation laws, resulting in the RKDG method. 
The RKDG method features many desirable properties, such as high-order accuracy, good $hp$ adaptivity, compatibility with limiters for robust capturing of shock waves, flexibility for handling complex geometry, high parallel efficiency, etc. The RKDG method has become one of the mainstream numerical methods in computational fluid dynamics \cite{cockburn2001runge}. 
\par
The cRKDG method is a variant version of the RKDG method with improved compactness. In the original RKDG method, the numerical scheme needs information from the neighboring cells for evaluating the numerical flux in each RK stage. Therefore, the stencil size of the RKDG method expands with the number of stages. To address this issue, in \cite{chen2024runge}, we propose the cRKDG method, which uses the interior trace, instead of the numerical flux, for evaluation of the cell interface terms at all inner RK stages. Thus, data communication only occurs at the final stage in each time step. Therefore, the resulting method is compact, in the sense that its stencil only involves the current mesh cell and immediate neighbors. The compactness of the cRKDG method allows for additional advantages of the method, including reduced data communication, high parallel efficiency, simple boundary treatment, suitability for local time marching, etc. The cRKDG method is known to have connections with a few other fully discrete DG schemes \cite{qiu2005discontinuous,dumbser2008unified,gassner2011explicit}. Its construction hybridizes different spatial operators within a time step, sharing similar flavors as some of the recent works \cite{xu2024sdf,chen2024sdrkdg,sun2024reducing}. 
\par
Techniques for preserving the bound of standard RKDG schemes have been extensively studied in the literature. Our research primarily follows the pioneering work of Zhang and Shu \cite{zhang2010maximum,zhang2010positivity}. We also refer to \cite{zhang2011maximum, zhang2012positivity, xu2017bound,shu2020boundpreserving} for further related developments. This stream of research utilizes the following methodology for designing conservative and high-order bound-preserving RKDG schemes. First, it is shown that under an appropriate Courant--Friedrichs--Lewy (CFL) condition, the cell averages of the forward-Euler DG solution resides within the invaraint region $B$. This property is usually referred to as the weak positivity or the weak bound-preserving property. Second, a scaling limiter is applied to squash the solution towards the bound-preserving cell averages to enforce pointwise bound. Third, the bound-preserving solver with forward-Euler method is then extended to higher temporal order by using the strong-stability-preserving (SSP) RK methods \cite{gottlieb2001strong,gottlieb2011strong}. 
\par
In this paper, we develop bound-preserving strategies for the cRKDG solvers, to embrace its numerical advantages while improving its robustness by enforcing appropriate physical bounds. The main challenge in this task comes from two aspects. First, the cRKDG method hybridizes two different spatial operators in a single time step, the original DG operator and the local derivative operator. Hence, a single building block only involving the DG operator will not be sufficient. Second, the cRKDG method is designed with RK methods in the standard Butcher form with no obvious SSP structures. Hence, it is not straightforward to take advantage of the convex property to reduce the problem of preserving the bound for a high-order-in-time method to that for a first-order-in-time method. 
\par
The main idea to circumvent the aforementioned challenge is to formulate the cRKDG method as a convex combination of three different types of forward-Euler solvers: the original DG operator, the forward-in-time local derivative operator, and the backward-in-time local derivative operator. First, we show that, in addition to the weak bound-preserving property of the forward-Euler step for the DG operator, the same property holds for the local derivative operator, both forward and backward in time. This property is proved based on what is referred to in the literature as the Lax–Friedrichs splitting property \cite{wu2015high}. Second, we demonstrate that in many RK methods, including those in Butcher form, each RK step can be expressed as a convex combination of three types of forward-Euler solvers. Consequently, each stage satisfies the weak bound-preserving property. Finally, the scaling limiter is applied after each RK stage to enforce pointwise bounds.
\par
Our new solver not only preserves the physical bound for the cRKDG method, but also maintains its compactness, the local conservation, and high-order spatial accuracy. The method we design applies to both the scalar conservation laws and hyperbolic systems. Although here, we have been particularly focusing on the compressible Euler equations for gas dynamics, the idea can be easily extended to other systems, such as the shallow water equations and the relativistic hydrodynamics (see Remark~\ref{rmk:other_systems}). We have also spent efforts on identifying RK methods that permit large time steps. Notably, since our method does not rely on the SSP-RK methods, we circumvent their order barriers and are able to construct a fourth-order bound-preserving cRKDG method with only four temporal stages. 
\par
The rest of the paper is organized as follows. In Section~\ref{sec:background}, we introduce the notations and  provide the background for the RKDG, cRKDG, and bound-preserving SSP-RKDG methods. 
In Section~\ref{sec:bound_preserving}, we prove the weak bound-preserving properties and design the bound-preserving cRKDG schemes for both scalar conservation laws and hyperbolic systems with particular focus on compressible Euler equations. 
In Section~\ref{sec:numerical_experiment}, we provide numerical tests with scalar conservation laws and Euler equations both in 1D and 2D to demonstrate the performance of the proposed method. Conclusions are given in Section~\ref{sec:conclusion}.

\section{Notations and preliminaries}\label{sec:background}
Let $\setE_h = \{K\}$ be a mesh partition of the spatial domain $\Omega$. We denote by $h_K$ the diameter of a cell $K$ and $h = \max_{K\in \setE_h} h_K$. The discontinuous piecewise polynomial space is defined as 
\begin{equation}
\IV_h = \{v \in L^2(\Omega)\!: v|_K \in [\IZ^k(K)]^\ns,~ \forall K \in \setE_h\}.
\end{equation}
Here $\IZ^k(K) = \IP^k(K)$ or $\IQ^k(K)$, where $\IP^k(K)$ is the linear space for polynomials on $K$ of degree less than or equal to $k$ ($k\geq1$), and $\IQ^k(K)$ is the linear space for polynomials on $K$ of degree less than or equal to $k$ ($k\geq1$) in each variable. For each edge $e_{\ell,K} \in \partial K$, we define $\nu_{\ell,K}$ to be the unit outer normal. 
Moreover, let $v_h^\inte$ and $v_h^\exte$ denote the interior and exterior traces of $u_h$ on $e_{\ell,K}$, respectively. 

\subsection{RKDG and cRKDG methods}
\paragraph{Semidiscrete DG method.}  
In the semidiscrete DG method for \eqref{eq:claws}, we approximate the solution $u(\cdot,t)$ by a discontinuous piecewise polynomial $u_h(\cdot,t) \in \IV_h$ and use the DG operator $\dxDG f$ to approximate the flux divergence $\nabla \cdot f$. It yields  
\begin{equation}\label{eq:semi}
    \partial_t u_h + \dxDG f(u_h) = 0.
\end{equation}
Here, $\dxDG f(\cdot):\IV_h \to \IV_h$ is the DG operator, which is defined as the following. 
\begin{align}\label{eq:dxDG}
\int_K [\dxDG f(u_h)]\cdot  v_h  \dd x=& -\int_K f(u_h) : \nabla v_h \dd x + \sum_{\ell=1}^{\nlK} \int_{e_{\ell,K}} \widehat{f\cdot\nu_{\ell,K}}\big(u_h^\inte, u_h^\exte \big) \cdot  v_h \dd l\quad \forall v_h \in \IV_h,
\end{align}
where ``:'' denotes the Frobenius inner product of two matrices and $\widehat{f\cdot\nu_{\ell,K}}\big(u_h^\inte,u_h^\exte \big)$ is the numerical flux. For instance, one can take the Lax--Friedrichs flux 
\begin{equation}
\widehat{f\cdot \nu_{\ell,K}} 
= \frac{f(u_h^\inte) + f(u_h^\exte)}{2}\cdot\nu_{\ell,K} - \frac{\alpha_{\ell,K}}{2}\left(u_h^\exte - u_h^\inte\right)
\quad\text{with}\quad \alpha_{\ell,K} = \max |\mathrm{eig}\left(\partial_u f \cdot \nu_{\ell,K}\right)|. 
\end{equation}

\paragraph{RKDG method.} For the RKDG scheme, an explicit RK time-stepping method is used to discretize \eqref{eq:semi}. Consider an explicit RK method with the Butcher tableau
\begin{equation}\label{eq:butcher}
\begin{array}{c|c}
c & A\\ \hline
~ & b\\
\end{array}, \quad 
A = (a_{ij})_{s\times s}, \quad b = (b_1,\cdots, b_s), \quad c = (c_1, \cdots, c_s)^\top,
\end{equation}
where $A$ is a lower triangular matrix, namely,  $a_{ij} = 0$ for $i\leq j$. 
Then, the fully discrete RKDG scheme with the time step size $\Delta t$ is written as 
\begin{subequations}\label{eq:rkdg}
\begin{align}
u_h^{(i)} &= u_h^n -  \Delta t\sum_{j = 1}^{i-1}  a_{ij} \dxDG f\left(u_h^{(j)}\right), \quad i = 1, 2, \cdots, s,\label{eq-rkdg1}\\
u_h^{n+1} &= u_h^n - \Delta t \sum_{i = 1}^s b_i \dxDG  f\left(u_h^{(i)} \right).\label{eq:rkdg-2}
\end{align}
\end{subequations}

In many applications, the SSP-RK methods \cite{gottlieb2001strong,gottlieb2011strong} are used for time integration. Such RK schemes can be written in Shu--Osher form as convex combinations of the forward-Euler steps. The corresponding fully discrete SSP-RKDG schemes take the following form. 
\begin{subequations} \label{rk:ssp way1}
\begin{align}
u_h^{(0)} &= u_h^n, \\
u_h^{(i)} &= \sum_{j=0}^{i-1}\left(\alpha_{i j} u_h^{(j)}-\Delta t \beta_{i j} \dxDG  f\left(u_h^{(j)}\right)\right), \quad \alpha_{ij},~\beta_{ij} \geq 0, \quad i=1, \cdots, s, \\
u_h^{n+1} &= u_h^{(s)}.
\end{align}		
\end{subequations}

\paragraph{cRKDG method.} For the cRKDG method \cite{chen2024runge}, let us introduce the local derivative operator $\dxl f(\cdot):\IV_h\to \IV_h$, such that
\begin{equation}
\int_K [\dxl f(u_h)]\cdot v_h  \dd x = - \int_K f(u_h) : \nabla v_h \dd x + \sum_{\ell=1}^{\nlK} \int_{e_{\ell,K}} \left(f(u_h^\inte)\cdot \nu_{\ell,K}\right)  \cdot  v_h \dd l \quad \forall v_h \in \IV_h.
\end{equation}
Note that, compared to $\dxDG f$, we use the interior trace instead of the numerical flux on the cell interfaces for $\dxl f$. The cRKDG scheme is given by replacing $\dxDG f$ in \eqref{eq-rkdg1} by $\dxl f$, namely, 
\begin{subequations}\label{eq:crkdg}
\begin{align}
u_h^{(i)} &= u_h^n -  \Delta t\sum_{j = 1}^{i-1}  a_{ij} \dxl f\left(u_h^{(j)}\right), \quad  i = 1, 2, \cdots, s,\label{eq-crkdg1}\\
u_h^{n+1} &= u_h^n - \Delta t \sum_{i = 1}^s b_i \dxDG  f\left(u_h^{(i)} \right).\label{eq:crkdg-2}
\end{align}
\end{subequations}
It is worth emphasizing that to preserve the conservative property, the cRKDG method must be constructed using the RK method in Butcher form \eqref{eq:rkdg}, rather than adopting the RK method in Shu--Osher form, as in \eqref{rk:ssp way1}.

\paragraph{Further notations.} For the sake of convenience in notation, let us define
\begin{equation}
\cF(u_h) = -\dxDG f(u_h) 
\quand \cG(u_h) = -\dxl f(u_h)
\end{equation}
for the DG operator and the local derivative operator, respectively. We denote by $\overline{v}_K:=\frac{1}{|K|}\int_K v(x)\dd x$ the cell average of $v$ on $K\in \setE_h$. Moreover, we define
\begin{equation}
\lambda = \max_{\ell,K}\Delta t\frac{|e_{\ell,K}|}{|K|} \quand \al = \max_{x,\ell,K}|\mathrm{eig}\left(\partial_u f(u)\cdot \nu_{\ell,K}\right)|.
\end{equation}
As a convention, we define $|e_{\ell,K}|=1$ if $d = 1$. In the rest of the paper, the symbol $c$ with appropriate subscripts is used to denote a generic positive constant independent of $h$ and $\Delta t$. 

\subsection{Bound-preserving SSP-RKDG method}
In this section, we provide a brief review of the bound-preserving SSP-RKDG method \cite{zhang2010maximum,zhang2010positivity,zhang2012maximum,qin2016bound,xing2010positivity,xing2013positivity,wu2023geometric}, which is constructed following the approach outlined below. 
\begin{itemize}[noitemsep]
\item[1.] Show that the solution of forward-Euler method preserves the weak bound-preserving property. In other words, under a time step constraint, the solution cell averages at the next time level remain in the invariant region. 
\item[2.] Apply a limiter to scale the solution polynomials towards the cell averages to preserve the pointwise bound for forward-Euler solutions.
\item[3.] Extend to high-order time stepping methods with the SSP-RK method. 
\end{itemize}

\subsubsection{Quadrature and convex decomposition of cell averages} In the design of the SSP-RKDG method, the integration along cell edges in \eqref{eq:dxDG} is approximated by a quadrature rule, i.e., the integral along the edge $e_{\ell,K}$ is computed by 
\begin{equation}
\int_{e_{\ell,K}} v \dd l \approx |e_{\ell,K}|\sum_{\beta = 1}^{{\rr}_\ell} \wB_{\beta,\ell,K} v(x_{\beta,\ell,K}), \qquad \forall \ell, K.
\end{equation}
Here, $\wB_{\beta,\ell,K}$ and $x_{\beta,\ell,K}$ ($\beta = 1, \cdots, {\rr}_\ell$) denote the associated quadrature weights and abscissae.
This quadrature rule should be exact for the integration of constants, i.e., it satisfies the Proposition~\ref{assp:edge}. 
\begin{PROP}\label{assp:edge}\it
For all cell $K$ and edge $\ell$, the quadrature rule on $e_{\ell,K}$ is exact for integrating constants. We have
\begin{equation}\label{eq:exact-edges}
\sum_{\beta = 1}^{{\rr}_\ell} \wB_{\beta,\ell,K} = 1. 
\end{equation}
\end{PROP}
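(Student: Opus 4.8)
The plan is to obtain the identity by testing the exactness hypothesis against the simplest possible integrand, namely the constant function $v\equiv 1$ on the edge $e_{\ell,K}$. First I would record the exact value of the integral: directly from the definition of the edge measure, $\int_{e_{\ell,K}} 1\,\dd l = |e_{\ell,K}|$, with the understanding that $|e_{\ell,K}|=1$ in the one-dimensional case by the stated convention. Next, since the quadrature rule $\int_{e_{\ell,K}} v\,\dd l \approx |e_{\ell,K}|\sum_{\beta=1}^{{\rr}_\ell}\wB_{\beta,\ell,K}\,v(x_{\beta,\ell,K})$ is by assumption exact whenever $v$ is constant, applying it to $v\equiv 1$ reproduces $|e_{\ell,K}|$ exactly, yielding $|e_{\ell,K}| = |e_{\ell,K}|\sum_{\beta=1}^{{\rr}_\ell}\wB_{\beta,\ell,K}$.

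The conclusion $\sum_{\beta=1}^{{\rr}_\ell}\wB_{\beta,\ell,K}=1$ then follows immediately upon dividing through by $|e_{\ell,K}|$, which is permissible because every edge of a nondegenerate mesh cell has positive measure (and the quantity is set to $1$ by convention when $d=1$).

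I do not expect any genuine obstacle here: the statement is in effect a restatement of the exactness-for-constants requirement already imposed on the quadrature, so the ``proof'' is really just the observation that this requirement, specialized to $v\equiv 1$, forces the weights to sum to one. The only point deserving a moment's care is confirming that the one-line argument is uniform across spatial dimensions — in particular that reading $\int_{e_{\ell,K}}1\,\dd l$ as the edge measure, together with the $d=1$ convention $|e_{\ell,K}|=1$, makes the normalization $\sum_{\beta}\wB_{\beta,\ell,K}=1$ hold verbatim in the one-dimensional case as well.
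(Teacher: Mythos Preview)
Your proposal is correct and matches the paper's treatment: the paper states this proposition as an assumed property of the edge quadrature rather than proving it, with the weight identity \eqref{eq:exact-edges} being the immediate consequence of exactness on $v\equiv 1$. Your one-line verification is exactly the intended reading, and no further argument is given or needed in the paper.
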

A key ingredient in showing the weak bound-preserving property is to introduce an appropriate convex decomposition of the cell averages. See \cite{cui2023classic,cui2024optimal} and references therein. 
\begin{PROP}\label{assp:CAD}\it
There exists a convex decomposition of the cell average that includes all quadrature points at the cell interfaces $\{x_{\beta,\ell,K}\!:~ \beta = 1, \cdots, {\rr}_\ell,~ \ell = 1, \cdots, l(K)\}$. 
\begin{equation}\label{eq:cd}
\overline{v}_K = \sum_{\alpha = 1}^{{\rr}_K}\wI_{\alpha,K} v(x_{\alpha,K}) + \sum_{\ell=1}^{\nlK}\sum_{\beta = 1}^{{\rr}_\ell} \wI_{\beta,\ell,K} v(x_{\beta,\ell,K}), \quad 0<\wI_{\alpha,K}, \wI_{\beta,\ell,K}<1\quad \forall v\in \IZ^k(K).
\end{equation}
\end{PROP}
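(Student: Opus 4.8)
The plan is to reduce \eqref{eq:cd} to a handful of reference-cell computations and then to build the decomposition from Gauss--Lobatto-type quadratures, as in the bound-preserving literature that Proposition~\ref{assp:CAD} points to. An affine change of variables maps $\IZ^k(K)$ onto $\IZ^k(\widehat K)$, sends the cell average to the cell average, and sends the interface quadrature nodes to the interface quadrature nodes; since the weights appearing in \eqref{eq:cd} are the normalized (dimensionless) ones, it suffices to construct one decomposition on each reference cell occurring in $\setE_h$, namely the reference box paired with $\IQ^k$ and the reference simplex paired with $\IP^k$.

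In one space dimension take $N=\lceil (k+3)/2\rceil$. The $N$-point Gauss--Lobatto rule on $\widehat K$ has strictly positive weights, is exact on $\IP^{2N-3}\supseteq\IP^k$, and includes the two endpoints of $\widehat K$, which are precisely the (single) interface quadrature points $x_{\beta,\ell,K}$ when $d=1$. Renormalizing by $|\widehat K|$ and mapping back yields \eqref{eq:cd}, the interior nodes being the interior Gauss--Lobatto nodes. For a tensor-product cell with $v\in\IQ^k$ I would use the construction of Zhang and Shu: for each coordinate direction form the tensor rule obtained by placing the $N$-point Gauss--Lobatto rule in that direction (this supplies the pair of faces normal to that direction, together with a Gauss rule in the transverse variables on those faces) and the ordinary Gauss rule in the remaining directions, and then take the convex combination of these $d$ tensor rules with the standard positive coefficients. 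Each such rule is exact on $\IQ^k$, has positive weights, and integrates $1$ to $|\widehat K|$, so the convex combination does too; and the union of the boundary nodes of the $d$ rules is exactly the collection $\{x_{\beta,\ell,K}\}$ once the face quadrature of Proposition~\ref{assp:edge} is chosen to be the matching transverse tensor-Gauss rule.

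For a simplicial cell with $v\in\IP^k$ the corresponding explicit decompositions are more delicate, and here I would invoke the constructions of \cite{cui2023classic,cui2024optimal} and the works they build on, which produce convex decompositions of the triangular/tetrahedral cell average containing all the prescribed face Gauss nodes with positive weights and exact on $\IP^k$. For a face quadrature satisfying only Proposition~\ref{assp:edge} (and for cells of arbitrary shape) one can instead argue abstractly: the map $v\mapsto\overline v_K$ is a positive linear functional on the finite-dimensional space $\IZ^k(K)$, and by norm equivalence each point evaluation $v\mapsto v(x_{\beta,\ell,K})$ is dominated by it, in the sense that $v(x_{\beta,\ell,K})\le c\,\overline v_K$ for every $v\in\IZ^k(K)$ with $v\ge 0$ on $K$; hence, assigning sufficiently small positive weights $\wI_{\beta,\ell,K}$ to the interface nodes leaves $v\mapsto\overline v_K-\sum_{\ell,\beta}\wI_{\beta,\ell,K}v(x_{\beta,\ell,K})$ a positive linear functional on $\IZ^k(K)$, which by Tchakaloff's theorem (equivalently, Carath\'eodory's theorem for the moment cone) is represented by a positive-weight cubature on finitely many nodes $x_{\alpha,K}$ in $K$. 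Testing the resulting identity against $v\equiv 1$ and using \eqref{eq:exact-edges} gives $\sum_\alpha\wI_{\alpha,K}+\sum_{\ell,\beta}\wI_{\beta,\ell,K}=1$, which is \eqref{eq:cd}.

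The step I expect to be the real obstacle is the simplicial case with $k\ge 2$: producing \emph{explicitly} a decomposition that is simultaneously exact on $\IP^k$, has all weights positive, and contains the prescribed face nodes is genuinely nontrivial, and is precisely what the cited papers accomplish; the abstract Tchakaloff argument sidesteps it but yields non-explicit, hence non-implementable, nodes and weights. Everything else is routine: positivity of the weights follows from positivity of the Gauss and Gauss--Lobatto weights and of the combination coefficients (or from the smallness choice in the abstract argument), the partition-of-unity identity follows by testing against constants and invoking Proposition~\ref{assp:edge}, and exactness on $\IZ^k(K)$ follows from exactness of the underlying quadrature rules.
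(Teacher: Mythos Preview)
The paper does not prove Proposition~\ref{assp:CAD}; it is stated as a preliminary (note the label \texttt{assp:CAD}) and justified only by the sentence ``See \cite{cui2023classic,cui2024optimal} and references therein,'' together with Figure~\ref{fig:quadrature_rule} illustrating the Gauss/Gauss--Lobatto tensor nodes used in practice. In other words, the paper treats \eqref{eq:cd} as a standing structural assumption supplied by the bound-preserving literature, not as something to be established here.

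Your proposal is a faithful reconstruction of exactly that literature: the 1D Gauss--Lobatto rule and the Zhang--Shu tensor construction for $\IQ^k$ on boxes are precisely what the paper's Figure~\ref{fig:quadrature_rule} depicts, and for simplices you correctly point to the same references the paper cites. The abstract Tchakaloff/Carath\'eodory argument you add is a nice existence proof that goes beyond what the paper claims; it is valid (positivity of the residual functional follows from the norm-equivalence bound you state, and Tchakaloff then furnishes the interior nodes), though, as you note, non-constructive. So your write-up is correct and, where comparable, takes the same route as the sources the paper defers to; there is simply no in-paper proof to compare against.
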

An illustration of the quadratures that enjoy Propositions~\ref{assp:edge} and \ref{assp:CAD} and are used in third-order and fourth-order schemes on 2D structured meshes is given in Figure~\ref{fig:quadrature_rule}.
Note that in particular, when taking $v(x) \equiv 1$, \eqref{eq:cd} in Proposition~\ref{assp:CAD} implies
\begin{equation}\label{eq:const1}
1 = \sum_{\alpha = 1}^{{\rr}_K}\wI_{\alpha,K} + \sum_{\ell=1}^{\nlK}\sum_{\beta = 1}^{{\rr}_\ell} \wI_{\beta,\ell,K}, \quad 0<\wI_{\alpha,K}, \wI_{\beta,\ell,K}<1.
\end{equation}
\begin{figure}[ht!]
\begin{center}
\begin{tabularx}{0.9\linewidth}{@{}c@{~~}C@{~}C@{~}C@{}}
\begin{sideways}{\hspace{0.5cm} third-order scheme}\end{sideways} &
\includegraphics[width=0.275\textwidth]{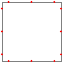} &
\includegraphics[width=0.275\textwidth]{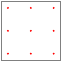}  &
\includegraphics[width=0.275\textwidth]{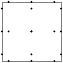} \\
\begin{sideways}{\hspace{0.5cm} fourth-order scheme}\end{sideways} &
\includegraphics[width=0.275\textwidth]{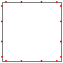} &
\includegraphics[width=0.275\textwidth]{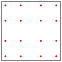}  &
\includegraphics[width=0.275\textwidth]{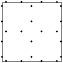} \\
\end{tabularx}
\caption{Quadratures in a square cell. From left to right: the quadrature points for edge integrals in Proposition~\ref{assp:edge} ($\{x_{\beta,\ell,K}\}$), for volume integrals, and for the convex decomposition in Proposition~\ref{assp:CAD} ($\{x_{\alpha,K},x_{\beta,\ell,K}\}$) for the weak bound-preserving property. The red points are constructed by Gauss quadrature and the black points are constructed by tensor product of Gauss quadrature and Gauss--Lobatto quadrature. The black points are used only in defining the bound-preserving limiter, and they are not used in calculating any numerical integration.}
\label{fig:quadrature_rule}
\end{center}
\end{figure}

In the rest of the paper, we denote by $S_K = \{x_{\alpha,K}, x_{\beta,\ell,K}\}$ the set of the points in the above convex decomposition on $K$. 
Note that $u_h$ is double-valued at cell interfaces, including all $\{x_{\beta,\ell,K}\}\subset S_K$. We will call $u_h \in B$ on  ${S}_K$ if  $u_h(x_{\alpha,K}) \in B$ and  $u_h^\inte(x_{\beta,\ell,K}) \in B$ for all $\alpha$ and $\beta$; we will call  
$u_h \in B$ on  $\overline{S}_K$ if $u_h \in B$ on ${S}_K$ and, in addition, $u_h^\exte(x_{\beta,\ell,K}) \in B$ for all $\beta,\ell$. We will use the short notations $u_{\alpha} = u_h(x_{\alpha})$ and $u_{\beta,\ell} = u_h^\inte(x_{\beta,\ell})$.

\subsubsection{Scalar hyperbolic conservation laws}

The solution to a scalar hyperbolic conservation law admits the maximum principle (e.g. \cite{dafermos2005hyperbolic}): if the initial condition is within the bound 
\begin{equation}\label{eq:scalar-bound}
    B = [m,M],
\end{equation}
then its solution will remain in the same region for all future time. 

The following proposition states that the forward-Euler method in conjunction with the DG spatial discretization preserves the weak maximum principle. See \cite{zhang2010maximum} for the 1D and 2D Cartesian meshes and \cite{zhang2012maximum} for the 2D triangular meshes. Further discussions can be found at \cite{cui2023classic,cui2024optimal}. 
\begin{PROP}[Weak maximum principle with $\cF$]\label{prop:weakp-scalar}\it
Consider scalar hyperbolic conservation laws in 1D or 2D over Cartesian or triangular meshes. Under the CFL condition 
\begin{equation}\label{CFL}
        \lambda \al \leq c_\mathrm{\cF},
\end{equation}
for $B$ defined in \eqref{eq:scalar-bound}, and for any $K\in \setE_h$, we have  
\begin{equation}
u_h \in B ~\text{on}~ \overline{S}_K \quad\Rightarrow\quad \overline{u_h + \Delta t \cF(u_h)}_K \in B.
\end{equation}
\revone{Here $c_\mathrm{\cF}$ is a positive constant. In the case that the same quadrature rule is used along all edges, $c_\mathrm{\cF}= \min_{\beta,\ell,K}\frac{\omega_{\beta,\ell,K}^\mathrm{I}}{\omega_{\beta,\ell,K}^\mathrm{B}}$ for Cartesian meshes and $c_\mathrm{\cF}= \min_{\beta,\ell,K}\frac{|e_{\ell,K}|}{\sum_{l=1}^{3}|e_{l,K}|}\frac{\omega_{\beta,\ell,K}^\mathrm{I}}{\omega_{\beta,\ell,K}^\mathrm{B}}$ for triangular meshes.}
\end{PROP}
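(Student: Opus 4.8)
The plan is to reproduce, in the present notation, the argument of Zhang and Shu \cite{zhang2010maximum,zhang2012maximum}. The goal is to write the updated cell average $\overline{u_h + \Delta t\,\cF(u_h)}_K$ as a convex combination, over the points of $\overline{S}_K$, of values already in $B$ together with a few three-point ``monotone-scheme'' updates that inherit the maximum principle from the Lax--Friedrichs flux; convexity of $B$ then closes the argument. First I would fix $K\in\setE_h$ and test the semidiscrete DG scheme against $v_h$ equal to the constant $1$ on $K$ and $0$ on all other cells. Since $\nabla v_h\equiv 0$ on $K$, the volume term in \eqref{eq:dxDG} disappears and one obtains
\begin{equation*}
\overline{u_h + \Delta t\,\cF(u_h)}_K = \overline{u_h}_K - \frac{\Delta t}{|K|}\sum_{\ell=1}^{\nlK}\int_{e_{\ell,K}}\widehat{f\cdot\nu_{\ell,K}}\big(u_h^\inte,u_h^\exte\big)\,\dd l .
\end{equation*}
Next I would replace each edge integral by the quadrature rule, which by Proposition~\ref{assp:edge} is exact for constants, and then substitute the convex decomposition of $\overline{u_h}_K$ from Proposition~\ref{assp:CAD}. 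This rewrites the right-hand side as a linear combination of the interior values $u_\alpha=u_h(x_{\alpha,K})$ with positive weights $\wI_{\alpha,K}$, plus, for each interface quadrature node $x_{\beta,\ell,K}$, a term of the form $\wI_{\beta,\ell,K}\,u_{\beta,\ell}-\tfrac{\Delta t|e_{\ell,K}|}{|K|}\wB_{\beta,\ell,K}\,\widehat{f\cdot\nu_{\ell,K}}\big(u_{\beta,\ell}^\inte,u_{\beta,\ell}^\exte\big)$.

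The interior part is already a convex combination of points $u_\alpha\in B$, so the substantive task is the interface contributions. Here I would, node by node, combine the weight $\wI_{\beta,\ell,K}$ with the flux term and with suitably inserted intermediate fluxes so that each interface term becomes $\wI_{\beta,\ell,K}$ (or a fraction of it) times a genuine three-point update
\begin{equation*}
H(a;b,c)=a-\mu\big(\hat g(a,b)-\hat g(c,a)\big),\qquad \mu=\frac{\Delta t|e_{\ell,K}|}{|K|}\,\frac{\wB_{\beta,\ell,K}}{\wI_{\beta,\ell,K}},
\end{equation*}
where $\hat g$ is a Lax--Friedrichs flux and $a,b,c$ are trace values at points of $\overline{S}_K$, hence in $B$. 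The precise choice of intermediate fluxes and the cancellation of the leftover flux terms is the mesh-dependent part of the proof: in 1D the intermediate fluxes telescope between the two endpoints of the cell; on Cartesian meshes one works one coordinate direction at a time; on triangles one invokes the closed-cell identity $\sum_{\ell}|e_{\ell,K}|\nu_{\ell,K}=0$ together with the specific convex decomposition of \cite{zhang2012maximum}, whose coupling of the three faces is what forces the additional factor $|e_{\ell,K}|/\sum_{l=1}^{3}|e_{l,K}|$ into $c_{\cF}$.

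It then remains to check that $H$ maps $[m,M]^3$ into $[m,M]$. Expanding the Lax--Friedrichs flux and using $|\mathrm{eig}(\partial_u f\cdot\nu_{\ell,K})|\le\al$, one sees that $H$ is non-decreasing in $b$ and in $c$ (through the monotone functions $u\mapsto f(u)\cdot\nu_{\ell,K}\pm\al\,u$), while the coefficient of $a$ equals $1-\mu\,\al$, which is nonnegative exactly when $\mu\,\al\le1$. Since $\hat g$ is consistent, $H(u;u,u)=u$, so monotonicity in all three arguments forces $H(a;b,c)\in[m,M]$ whenever $a,b,c\in[m,M]$. Collecting the constraint $\mu\,\al\le1$ over all interface quadrature nodes and recalling $\lambda=\max_{\ell,K}\Delta t|e_{\ell,K}|/|K|$ and $\al=\max_{x,\ell,K}|\mathrm{eig}(\partial_u f\cdot\nu_{\ell,K})|$ yields exactly the CFL condition \eqref{CFL} with the stated constant $c_{\cF}$. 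Under \eqref{CFL} the updated cell average is therefore a convex combination — the weights sum to $1$, as one verifies by evaluating the identity on a constant state — of points of $\overline{S}_K$, all lying in $B$, and hence lies in $B$.

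The step I expect to be the main obstacle is the interface regrouping: arranging the intermediate-flux insertions and the normal cancellations so that honest three-point monotone updates emerge even when the edge quadrature nodes on different faces of $K$ are misaligned, and extracting the sharp constant $c_{\cF}$, is precisely where the 1D, 2D-Cartesian and 2D-triangular cases genuinely differ and where the particular convex decompositions furnished by Proposition~\ref{assp:CAD} (see also \cite{cui2023classic,cui2024optimal}) are needed. Once that structure is in place, the monotonicity estimate for $H$ and the convexity conclusion are routine.
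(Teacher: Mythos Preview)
Your proposal is correct and follows exactly the Zhang--Shu argument from \cite{zhang2010maximum,zhang2012maximum}, which is precisely what the paper relies on: the paper does not prove Proposition~\ref{prop:weakp-scalar} at all but simply states it as a known background result with citations to those references. Your sketch is therefore more detailed than anything in the paper itself, and the approach you outline is the standard one the paper is invoking.
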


After ensuring the weak maximum principle, a scaling limiter is applied to enforce the pointwise maximum principle of the solution $u_h$ on $S_K$.

\begin{PROP}\label{prop:limiter-scalar}\it
Consider scalar hyperbolic conservation laws in 1D or 2D. Let $p_K(x)$ be the solution polynomial on the mesh cell $K$, with its cell average $\overline{u}_{h,K}\in B = [m,M]$. Then define
\begin{equation}
\tilde{p}_K(x) = \theta \left(p_K(x) - \overline{u}_{h,K}\right) + \overline{u}_{h,K}, \quad 
\theta = \min\left\{\left|\frac{M-\overline{u}_{h,K}}{M_K-\overline{u}_{h,K}}\right|,\, \left|\frac{m-\overline{u}_{h,K}}{m_K-\overline{u}_{h,K}}\right|,\, 1\right\}
\end{equation}
with
\begin{equation}
M_K = \max_{x\in S_K} p_K(x) \quand m_K = \min_{x\in S_K} p_K(x).
\end{equation}
We have $p_K(x)\in B$, for all $x \in S_K$.
\end{PROP}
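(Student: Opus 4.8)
The plan is to read the scaling as a one-parameter family interpolating between the polynomial $p_K$ (at parameter value $1$) and the constant state $\overline{u}_{h,K}$ (at parameter value $0$), and to verify directly that the prescribed $\theta$ is small enough to pull every point value on $S_K$ into $B=[m,M]$, while being as close to $1$ as the one-sided constraints allow. Since $\tilde p_K(x)-\overline{u}_{h,K}=\theta\,(p_K(x)-\overline{u}_{h,K})$, the whole argument reduces to an elementary sign analysis of $p_K(x)-\overline{u}_{h,K}$ at each $x\in S_K$.

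First I would record two preliminaries. (i) Proposition~\ref{assp:CAD} writes $\overline{u}_{h,K}$ as a convex combination, with strictly positive weights summing to one, of the values $\{p_K(x)\!: x\in S_K\}$; hence $m_K\le\overline{u}_{h,K}\le M_K$, so $M_K-\overline{u}_{h,K}\ge0$ and $\overline{u}_{h,K}-m_K\ge0$, and the two ratios appearing in $\theta$ equal $\frac{M-\overline{u}_{h,K}}{M_K-\overline{u}_{h,K}}$ and $\frac{\overline{u}_{h,K}-m}{\overline{u}_{h,K}-m_K}$ (with the convention that a vanishing denominator makes the ratio $+\infty$, in which case the corresponding one-sided bound is vacuous). (ii) All three entries of the $\min$ are nonnegative, so $0\le\theta\le1$.

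Next I would check the upper bound $\tilde p_K(x)\le M$ for a fixed $x\in S_K$. If $p_K(x)\le\overline{u}_{h,K}$, then since $\theta\ge0$ we get $\tilde p_K(x)\le\overline{u}_{h,K}\le M$. If $p_K(x)>\overline{u}_{h,K}$, then $p_K(x)\le M_K$ yields $\theta\le\frac{M-\overline{u}_{h,K}}{M_K-\overline{u}_{h,K}}\le\frac{M-\overline{u}_{h,K}}{p_K(x)-\overline{u}_{h,K}}$, hence $\theta\,(p_K(x)-\overline{u}_{h,K})\le M-\overline{u}_{h,K}$, i.e. $\tilde p_K(x)\le M$. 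The lower bound $\tilde p_K(x)\ge m$ is symmetric: on the set where $p_K(x)\ge\overline{u}_{h,K}$ it is immediate from $\theta\ge0$, and where $p_K(x)<\overline{u}_{h,K}$ one uses $p_K(x)\ge m_K$ together with the second ratio. Combining the two one-sided estimates over all $x\in S_K$ gives $\tilde p_K(x)\in[m,M]$ for every $x\in S_K$, which is the claim.

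I do not anticipate any genuine obstacle: the proof is a short case distinction. The only points needing a little care are (a) justifying $m_K\le\overline{u}_{h,K}\le M_K$ from the convex decomposition of Proposition~\ref{assp:CAD}, which is exactly what makes the ratios nonnegative and hence $\theta$ behave as intended, and (b) the degenerate cases $M_K=\overline{u}_{h,K}$ or $m_K=\overline{u}_{h,K}$, where the corresponding one-sided constraint is trivially satisfied and that term is simply dropped from the $\min$ (equivalently, read as $+\infty$). I would also note in passing that $\tilde p_K\in\IZ^k(K)$ and $\overline{\tilde p}_{h,K}=\overline{u}_{h,K}$, so the limiter is conservative and retains high-order accuracy near smooth regions, even though these facts are not required for the stated conclusion.
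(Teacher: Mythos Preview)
Your argument is correct and is exactly the standard Zhang--Shu justification for the scaling limiter. Note, however, that the paper does not actually supply a proof of this proposition: it appears in Section~\ref{sec:background} as review material, stated without proof and attributed to \cite{zhang2010maximum,zhang2017positivity}. So there is no ``paper's own proof'' to compare against; your write-up simply fills in what the authors take for granted from the literature.

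Two minor remarks. First, the conclusion as printed in the paper, ``$p_K(x)\in B$ for all $x\in S_K$'', is a typo for $\tilde p_K(x)\in B$; you correctly proved the intended statement. Second, your use of Proposition~\ref{assp:CAD} to obtain $m_K\le\overline{u}_{h,K}\le M_K$ is the right way to dispose of the absolute values and to handle the degenerate cases; this is precisely how the argument is presented in the original references.
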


\begin{remark}\it
The scaling limiter in Proposition~\ref{prop:limiter-scalar} does not affect the spatial accuracy of the solution. The proof for the 1D case can be found in \cite{zhang2017positivity}. 
\end{remark}
From Proposition~\ref{prop:weakp-scalar} and Proposition~\ref{prop:limiter-scalar}, we know that under a sufficiently small time step size and with the scaling limiter, the forward-Euler solution to the scalar hyperbolic conservation laws preserves the maximum principle on $S_K$. This property can be extended to higher temporal order by using the SSP-RK methods. For example, for the third-order scheme, one can apply
\begin{subequations}
\begin{align}
u_h^{(1)} &= u_h^n + \Delta t \cF(u_h^n),\\
u_h^{(2)} &= \frac{3}{4} u_h^{n} + \frac{1}{4} \left(u_h^{(1)} + \Delta t \cF(u_h^{(1)})\right),\\ 
u_h^{n+1} &= \frac{1}{3} u_h^{n} + \frac{2}{3} \left(u_h^{(2)} + \Delta t \cF(u_h^{(2)})\right).
\end{align}
\end{subequations}
The scaling limiter is applied after each  RK stage if there exists a point in $S_K$ that violates the bounds. 

\subsubsection{Hyperbolic systems: compressible Euler equations}\label{sec:BP-system}
For systems of hyperbolic conservation laws, we primarily focus on the Euler equations for gas dynamics and utilize it as a concrete example for clarity of presentation. However, our analysis and numerical strategies can also be extended to general hyperbolic systems, including the shallow water equations and relativistic hydrodynamics.
\par
The compressible Euler equations take the form of \eqref{eq:claws} with 
\begin{equation}\label{eq:Euler}
    u = \begin{pmatrix}\rho\\
    m^\top\\
    E
    \end{pmatrix}\quand f(u) = \begin{pmatrix}
    m\\
    \frac{1}{\rho}m\otimes m + p I_d\\
    (E+p)\frac{m}{\rho}
    \end{pmatrix}. 
\end{equation}
Here 
\begin{equation}\label{eq:Euler-2}
m = \rho w, \quad 
E =\frac{|m|^2}{2\rho} + \revtwo{\rho} e, \quand 
p = (\gamma-1)\rho e. 
\end{equation}
For conservative variables, $\rho$ is the density, $m:\Omega \to \IR^d$ is the momentum, and $E$ is the total energy. In addition, $w:\Omega \to \IR^d$ is the velocity, $e$ is the internal energy, and $p$ is the pressure. The parameter $\gamma > 1$ is a constant ($\gamma = 1.4$ for air). 
The solution to the Euler equations preserves the invariant region 
\begin{equation}\label{eq:Euler-bound}
B = \left\{u = (\rho, m^\top, E)^\top\!: \rho > 0 \quand p = (\gamma - 1)\left(E - \frac{|m|^2}{2\rho}\right) > 0\right\}.
\end{equation}
Note that if \eqref{eq:Euler-bound} is violated, namely if the density $\rho$ or pressure $p$ becomes negative, then the system \eqref{eq:Euler} will be non-hyperbolic, and thus the initial value problem will be ill-posed.
For Euler equations, we have $\al = \max||w\cdot\nu|+c|$ and $c = \sqrt{\gamma p / \rho}$.  

It was proved that the forward-Euler DG scheme preserves the weak positivity of the Euler equations. See \cite{zhang2010positivity} for the 1D or 2D Cartesian meshes and \cite{zhang2012maximum} for the 2D triangular meshes. 

\begin{PROP}[Weak bound-preserving property with $\cF$]\label{prop:weakp-euler}\it
Consider Euler equations \eqref{eq:Euler} in 1D or 2D over Cartesian or triangular meshes. Under a specific CFL condition 
\begin{equation}\label{eq:CFL-Euler-F}
\lambda \al 
\leq c_\cF,
\end{equation}
for $B$ defined in \eqref{eq:Euler-bound}, and for any $K \in \setE_h$, we have
\begin{equation}
u_h \in B ~\text{on}~ \overline{S}_K \quad\Rightarrow\quad 
\overline{u_h + \Delta t \cF(u_h)}_K \in B.
\end{equation}
\revone{Here $c_\mathrm{\cF}$ is a positive constant. In the case that the same quadrature rule is used along all edges, $c_\mathrm{\cF}= \min_{\beta,\ell,K}\frac{\omega_{\beta,\ell,K}^\mathrm{I}}{\omega_{\beta,\ell,K}^\mathrm{B}}$ for Cartesian meshes and $c_\mathrm{\cF}= \min_{\beta,\ell,K}\frac{|e_{\ell,K}|}{\sum_{l=1}^{3}|e_{l,K}|}\frac{\omega_{\beta,\ell,K}^\mathrm{I}}{\omega_{\beta,\ell,K}^\mathrm{B}}$ for triangular meshes.}
\end{PROP}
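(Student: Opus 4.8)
The statement is, up to notation, the Zhang--Shu positivity theorem \cite{zhang2010positivity,zhang2012maximum}, so the plan is to reassemble its proof from the ingredients already at hand: the convex decomposition of the cell average in Proposition~\ref{assp:CAD} together with the Lax--Friedrichs splitting property of the Euler flux. First I would test the forward-Euler update $u_h + \Delta t\,\cF(u_h) = u_h - \Delta t\,\dxDG f(u_h)$ against $v_h \equiv 1$ on $K$. Since $\nabla v_h = 0$, the volume term in \eqref{eq:dxDG} drops, and applying the edge quadrature rule yields
\begin{equation*}
\overline{u_h + \Delta t\,\cF(u_h)}_K = \overline{u}_{h,K} - \frac{\Delta t}{|K|}\sum_{\ell=1}^{\nlK}|e_{\ell,K}|\sum_{\beta=1}^{{\rr}_\ell}\wB_{\beta,\ell,K}\,\widehat{f\cdot\nu_{\ell,K}}\bigl(u_{\beta,\ell}^\inte,u_{\beta,\ell}^\exte\bigr).
\end{equation*}

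Next I would substitute $\overline{u}_{h,K}$ by its convex decomposition \eqref{eq:cd}. The interior nodes $x_{\alpha,K}$ occur only there, with weights $\wI_{\alpha,K}$, and contribute the admissible states $u_\alpha$ directly. For each interface quadrature point $x_{\beta,\ell,K}$ I would merge the flux contribution through that point into the matching term $\wI_{\beta,\ell,K}\,u_{\beta,\ell}^\inte$, rewriting the updated average as
\begin{equation*}
\overline{u_h + \Delta t\,\cF(u_h)}_K = \sum_{\alpha}\wI_{\alpha,K}\,u_\alpha + \sum_{\ell=1}^{\nlK}\sum_{\beta=1}^{{\rr}_\ell}\wI_{\beta,\ell,K}\,H_{\beta,\ell,K},\qquad H_{\beta,\ell,K}:=u_{\beta,\ell}^\inte - \sigma_{\beta,\ell,K}\,\widehat{f\cdot\nu_{\ell,K}}\bigl(u_{\beta,\ell}^\inte,u_{\beta,\ell}^\exte\bigr),
\end{equation*}
where $\sigma_{\beta,\ell,K} = \Delta t\,|e_{\ell,K}|\,\wB_{\beta,\ell,K}/(|K|\,\wI_{\beta,\ell,K})$. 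By \eqref{eq:const1} the coefficients $\wI_{\alpha,K},\wI_{\beta,\ell,K}$ are nonnegative and sum to one, so by convexity of $B$ it is enough to prove $H_{\beta,\ell,K}\in B$ for every interface node.

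For that I would write the Lax--Friedrichs flux as $\widehat{f\cdot\nu}(v,w) = \hf\al\bigl[(v + \al^{-1}f(v)\cdot\nu) - (w - \al^{-1}f(w)\cdot\nu)\bigr]$ and substitute into $H_{\beta,\ell,K}$; a short rearrangement exhibits it as the convex combination
\begin{equation*}
H_{\beta,\ell,K} = (1-\sigma_{\beta,\ell,K}\al)\,u_{\beta,\ell}^\inte + \hf\sigma_{\beta,\ell,K}\al\bigl(u_{\beta,\ell}^\inte - \al^{-1}f(u_{\beta,\ell}^\inte)\cdot\nu_{\ell,K}\bigr) + \hf\sigma_{\beta,\ell,K}\al\bigl(u_{\beta,\ell}^\exte - \al^{-1}f(u_{\beta,\ell}^\exte)\cdot\nu_{\ell,K}\bigr).
\end{equation*}
The two split states $\hf(u\pm\al^{-1}f(u)\cdot\nu)$ lie in $B$ by the Lax--Friedrichs splitting property of the Euler system: the density $\hf(\rho\pm\al^{-1}m\cdot\nu) = \hf\rho(1\pm\al^{-1}w\cdot\nu)$ is positive because $\al$ strictly dominates $|w\cdot\nu|$, and the pressure stays positive because $p$ is a concave function of the conservative variables (see \cite{zhang2010positivity}); and $u_{\beta,\ell}^\inte,u_{\beta,\ell}^\exte\in B$ by the hypothesis $u_h\in B$ on $\overline{S}_K$. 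The only remaining requirement is $1-\sigma_{\beta,\ell,K}\al\ge 0$, which on Cartesian meshes is exactly $\lambda\al\le\wI_{\beta,\ell,K}/\wB_{\beta,\ell,K}$, giving $c_\cF = \min_{\beta,\ell,K}\wI_{\beta,\ell,K}/\wB_{\beta,\ell,K}$; on triangular meshes the same bookkeeping produces the additional geometric factor $|e_{\ell,K}|/\sum_{l}|e_{l,K}|$ in $c_\cF$, reflecting how the three edge fluxes must be distributed among the boundary nodes of the convex decomposition (cf. \cite{zhang2012maximum}). Combining the two displays, $\overline{u_h + \Delta t\,\cF(u_h)}_K$ is a convex combination of points of $B$ and hence lies in $B$.

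The main obstacle is the third step, proving $H_{\beta,\ell,K}\in B$: it requires the Euler-specific fact that $\hf(u\pm\al^{-1}f(u)\cdot\nu)$ has positive pressure --- the one ingredient genuinely beyond the scalar Proposition~\ref{prop:weakp-scalar} --- and careful tracking of the nonnegativity of every coefficient after regrouping, since this is precisely what fixes the admissible constant $c_\cF$ (and what introduces the mesh-geometry factor on triangles). For the pressure-positivity lemma I would invoke \cite{zhang2010positivity} rather than redo its proof.
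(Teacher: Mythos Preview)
The paper does not prove Proposition~\ref{prop:weakp-euler}; it simply cites \cite{zhang2010positivity,zhang2012maximum}. Your sketch is correct and would serve as a proof. It differs in one respect from the original Zhang--Shu argument: there, after inserting the convex decomposition, one \emph{adds and subtracts} an auxiliary numerical flux $\widehat{f}(u^\inte_{\beta,\ell},u^\inte_{\beta,\ell'})$ between paired edges (left/right in 1D, opposite faces on Cartesian cells, the three edges on triangles) so that each boundary block becomes a genuine forward-Euler step of the \emph{first-order} scheme, which is known to be positivity-preserving for any flux with that property (Godunov, HLLE, Lax--Friedrichs, \ldots). Your decomposition instead lets each edge quadrature point absorb only its own outgoing flux and then splits $H_{\beta,\ell,K}$ directly via the Lax--Friedrichs splitting $u\pm\al^{-1}f(u)\cdot\nu\in B$; this is exactly the device the present paper uses to prove Lemma~\ref{prop:weakp-euler-G} for $\cG$, and it yields the same CFL constant, but it is tied to the Lax--Friedrichs flux rather than working for a generic positivity-preserving flux. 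Two small points: the factor $\hf$ in front of the split states in your penultimate paragraph is a slip (the states in your displayed convex combination are $u\pm\al^{-1}f(u)\cdot\nu$, not half of them --- harmless here since $B$ is a cone); and the pressure positivity of those split states is precisely Proposition~\ref{prop:system-pointwise} of the paper, so you can invoke that instead of re-citing \cite{zhang2010positivity}.
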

As that for the scalar conservation laws, after showing weak bound-preserving property, we can apply a scaling limiter to preserve the pointwise bound over $S_K$. In below, we detail the scaling limiter for the Euler equations, which are also the hyperbolic systems we will test numerically in Section \ref{sec:numerical_experiment}. For those of the shallow water equations and the relativistic hydrodynamics, we refer to \cite{xing2010positivity,xing2013positivity} and \cite{qin2016bound}. 
\par
From Proposition~\ref{prop:weakp-euler}, we know that in each cell $K\in\setE_h$, the cell average of the DG solution $u_h = \transpose{(\rho_h, m_h^\top, E_h)}$ produced by the forward-Euler method belongs to the invariant region $B$.
Therefore, there exists a small positive number $\epsilon$, such that 
\begin{align}
\overline{\rho}_h \geq \epsilon \quad\text{and}\quad
\overline{p}_h = (\gamma - 1)\Big(\overline{E}_h - \frac{|\overline{m}_h|^2}{2\overline{\rho}_h}\Big) \geq \epsilon.
\end{align}
In practice, $\epsilon$ can be set to $10^{-8}$ or $10^{-13}$ in the computation \cite[Section~4.2]{zhang2017positivity}.
Define the numerical admissible set for the compressible Euler equations:
\begin{align}
B^\epsilon = \left\{u = \transpose{(\rho, m^\top, E)}\!:~ 
\rho \geq \epsilon \quand 
p = (\gamma-1)\Big(E - \frac{|m|^2}{2\rho}\Big) \geq \epsilon \right\}.
\end{align}
Now it suffices to apply a scaling limiter to modify the high-order moments of the DG solution, so that the scaled solution falls within $B^\epsilon$ and its cell averages are preserved. 
To this end, first, we enforce positivity of density by setting
\begin{subequations}\label{eq:Euler-limiter}
\begin{align}\label{eq:Euler-limiter1}
\widehat{\rho}_K = \theta_1 (\rho_K - \overline{\rho}_K) + \overline{\rho}_K,
\quad
\theta_1 = \min\biggl\{1,\, \frac{\overline{\rho}_K-\epsilon}{\overline{\rho}_K-\min\limits_{x_q\in S_K}\rho_K(x_q)}\biggr\},
\end{align}
where $\overline{\rho}_K$ denotes the cell average of density on cell $K$, namely $\rho_K = \rho_h|_K$. Hereinafter, we define $m_K$ and $E_K$ similarly.
Notice that $\widehat{\rho}_K$ and $\rho_K$ have the same cell average, and $\widehat{\rho}_K = {\rho}_K$ if $\min\limits_{x_q\in S_K}\rho_K(x_q)\geq \epsilon$.
Then, we define $\widehat{u}_h = \transpose{(\widehat{\rho}_h, \transpose{m_h}, E_h)}$ and enforce positivity of internal energy by setting
\begin{align}\label{eq:Euler-limiter2}
\widetilde{u}_K = \theta_2 (\widehat{u}_K - \overline{u}_K) + \overline{u}_K,
\quad
\theta_2 = \min\biggl\{1,\, \frac{\overline {\rho e}_K - \epsilon}{\overline{\rho e}_K - \min\limits_{x_q\in S_K}\rho e_K(x_q)}\biggr\},
\end{align}
\end{subequations}
where $\overline{\rho e}_K = \overline E_K - \frac{|\overline{m}_K|^2}{2\overline\rho_K}$
and $\rho e_K(x_q) = E_K(x_q) - \frac{|m_K(x_q)|^2}{2\rho_K(x_q)}$.
Note that $\widetilde{u}_K$ has the same cell average as $\widehat{u}_K$.
We refer to \cite{zhang2010positivity,zhang2017positivity,xu2017bound} for the justification of its high-order accuracy.  
\begin{PROP}\it
Consider the Euler equations in 1D or 2D. Let $u_{h,K}(x)$ be the solution polynomial on the mesh cell $K$, admitting the cell average of $\overline{u}_{h,K} \in B^\epsilon$. Then after applying the scalar limiters \eqref{eq:Euler-limiter1} and \eqref{eq:Euler-limiter2}, the resulting solution polynomial satisfies $\widetilde{u}_{h,K}\in B^\epsilon$.
\end{PROP}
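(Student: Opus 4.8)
The plan is to follow the classical Zhang--Shu limiter analysis \cite{zhang2010positivity}, treating the two rescalings in \eqref{eq:Euler-limiter1} and \eqref{eq:Euler-limiter2} one after the other and reducing the whole statement to two elementary facts: that a rescaling toward the cell average with parameter $\theta\in[0,1]$ is a pointwise convex combination, and that the internal-energy functional $u=\transpose{(\rho,\transpose m,E)}\mapsto \rho e(u)=E-\frac{|m|^2}{2\rho}$ is concave on $\{\rho>0\}$. Throughout, the claim ``$\widetilde u_{h,K}\in B^\epsilon$'' is understood on the point set $S_K$, i.e. $\widetilde u_{h,K}(x)\in B^\epsilon$ for every $x\in S_K$, since the limiter thresholds are built from $\min_{x_q\in S_K}$.

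First I would analyze the density limiter. Because $\theta_1\in[0,1]$, for each $x\in S_K$ the scaled value $\widehat\rho_K(x)=\theta_1\rho_K(x)+(1-\theta_1)\overline\rho_K$ is a convex combination of $\rho_K(x)$ and $\overline\rho_K$. If $\min_{x_q\in S_K}\rho_K(x_q)\geq\epsilon$ then $\theta_1=1$ and there is nothing to prove; otherwise, writing $\rho_{\min}=\min_{x_q\in S_K}\rho_K(x_q)<\epsilon\leq\overline\rho_K$, the denominator $\overline\rho_K-\rho_{\min}$ is positive, and at the minimizing point one computes directly $\widehat\rho_K=\overline\rho_K-\theta_1(\overline\rho_K-\rho_{\min})=\epsilon$. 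Since $x\mapsto\widehat\rho_K(x)$ is an affine function of $\rho_K(x)$ with nonnegative slope $\theta_1$, it follows that $\widehat\rho_K(x)\geq\epsilon$ for all $x\in S_K$. The limiter touches only the density component, so $\widehat u_K=\transpose{(\widehat\rho_K,\transpose{m_K},E_K)}$ has the same cell average as $u_K$, whence $\overline{\widehat u}_K=\overline u_K\in B^\epsilon$; in particular $\overline{\rho e}_K=\overline E_K-\frac{|\overline m_K|^2}{2\overline\rho_K}\geq\epsilon$ as used below.

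Next I would analyze the internal-energy limiter. The key structural input — and the only nonroutine point — is the concavity of $g(u)=\rho e(u)=E-\frac{|m|^2}{2\rho}$ on the half-space $\{\rho>0\}$ (the map $(m,\rho)\mapsto|m|^2/\rho$ is the perspective of a convex function, hence convex, so $-|m|^2/(2\rho)$ is concave and adding the linear term $E$ preserves concavity). For $x\in S_K$ write $\widetilde u_K(x)=\theta_2\widehat u_K(x)+(1-\theta_2)\overline u_K$ with $\theta_2\in[0,1]$: this is a convex combination, so its density $\theta_2\widehat\rho_K(x)+(1-\theta_2)\overline\rho_K\geq\epsilon$ (a convex combination of quantities $\geq\epsilon$), and by concavity
\[
g\big(\widetilde u_K(x)\big)\;\geq\;\theta_2\,g\big(\widehat u_K(x)\big)+(1-\theta_2)\,g(\overline u_K)\;=\;\theta_2\,\rho e\big(\widehat u_K(x)\big)+(1-\theta_2)\,\overline{\rho e}_K .
\]
If $\min_{x_q\in S_K}\rho e_K(x_q)\geq\epsilon$ then $\theta_2=1$ and $\widetilde u_K=\widehat u_K$; otherwise, with $(\rho e)_{\min}=\min_{x_q\in S_K}\rho e_K(x_q)<\epsilon\leq\overline{\rho e}_K$ and $\theta_2=\frac{\overline{\rho e}_K-\epsilon}{\overline{\rho e}_K-(\rho e)_{\min}}$, substituting $\rho e(\widehat u_K(x))\geq(\rho e)_{\min}$ into the displayed inequality yields $g(\widetilde u_K(x))\geq\overline{\rho e}_K-\theta_2(\overline{\rho e}_K-(\rho e)_{\min})=\epsilon$ for every $x\in S_K$. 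Since $\widetilde u_K$ again has the same cell average as $\widehat u_K$, hence as $u_K$, the cell average is preserved, and combining the two steps gives $\widetilde u_{h,K}(x)\in B^\epsilon$ for all $x\in S_K$.

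The whole argument is therefore elementary once the concavity of $\rho e$ in the conservative variables is granted, which I expect to be the only real ``obstacle'' — and even that is classical. I would additionally remark, without detail, that the limiters are deliberately applied in the order density-then-energy so that $\widehat\rho_K$ is already admissible when $\rho e(\widehat u_K(x))$ is formed (and that lifting the density can only raise $\rho e$ pointwise, so using $\rho_K$ in place of $\widehat\rho_K$ inside \eqref{eq:Euler-limiter2} yields the same conclusion), and that the hypothesis $\overline u_{h,K}\in B^\epsilon$ needed to start the argument is exactly what Proposition~\ref{prop:weakp-euler} provides for the forward-Euler solution with $\epsilon$ chosen small enough.
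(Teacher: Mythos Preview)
The paper does not supply its own proof of this proposition; it is stated as a standard result with references to \cite{zhang2010positivity,zhang2017positivity,xu2017bound}. Your argument is precisely the classical Zhang--Shu limiter analysis those references contain, and the two main steps (density via affine monotonicity, pressure via concavity of $u\mapsto E-|m|^2/(2\rho)$ on $\{\rho>0\}$) are correct.

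One caveat on your closing parenthetical: the claim that ``lifting the density can only raise $\rho e$ pointwise'' is not right as stated. The density rescaling pulls $\rho_K$ toward $\overline\rho_K$, so at points where $\rho_K(x)>\overline\rho_K$ the density is \emph{lowered}, which \emph{lowers} $\rho e$ there; hence $\min_{S_K}\rho e(\widehat u_K)$ can be strictly smaller than $\min_{S_K}\rho e_K$ computed with the unlimited $\rho_K$, and the paper's literal threshold (which uses $\rho_K$ in the formula for $\rho e_K(x_q)$) need not suffice. The safe reading --- and the one your main argument actually uses --- is to form the internal-energy minimum with $\widehat\rho_K$. This affects only your side remark, not the proof itself.
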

The extension to the higher-order time stepping methods can be achieved via the SSP-RK methods similar to that has been discussed for scalar conservation laws. 

\section{Bound-preserving cRKDG method}\label{sec:bound_preserving}
The primary challenge in designing a bound-preserving cRKDG scheme is two-fold. 
First, the scheme involves not only the DG operator $\dxDG f$, but also the local derivative operator $\dxl f$. 
Second, the cRKDG schemes are based on RK methods in the Butcher form, as opposed to the Shu--Osher form.
As a result, the strategy used to prove bound preservation of SSP-RKDG methods cannot be directly applied to cRKDG schemes, i.e., our problem cannot be reduced to a single building block --- preserving the bound of $u_h + \Delta t \cF(u_h)$.   
\par
To address this challenge, we express the cRKDG scheme as convex combinations of forward-Euler steps, which involve not only $u_h + \Delta t \cF(u_h)$, but also $u_h + \Delta{t}\cG(u_h)$ and $u_h - \Delta{t}\cG(u_h)$. Our general route is outlined as follows. 
\begin{itemize}[noitemsep]
\item[1.] In addition to $u_h + \Delta t \cF(u_h)$, prove that $u_h+ \Delta t \cG(u_h)$ and $u_h- \Delta t \cG(u_h)$ also preserve the weak bound-preserving property. 
\item[2.] Write each stage of the RK method as a convex combination of the forward-Euler steps with $\cF$, $\cG$ and $-\cG$. Thus, each RK stage preserves the weak bound-preserving property.
\item[3.] After each RK stage, apply scaling limiters to preserve the bound on $S_K$. 
\end{itemize}
We emphasize that the convex decomposition of each RK stage is only employed to establish the weak bound-preserving property but not the implementation of the method. The scaling limiter will be applied after each complete RK stage, not after each forward-Euler step. 

\subsection{Forward-Euler steps with $\pm \cG$}
Let us define 
\begin{equation}
c_\cG = \min_{\beta,\ell,K}\frac{\wI_{\beta,\ell,K}}{\wB_{\beta,\ell,K}}.
\end{equation}

\subsubsection{Scalar conservation laws}

In Lemma \ref{prop:weakp-scalar-G}, we prove a counterpart to Proposition~\ref{prop:weakp-scalar} for the local derivative operator $\cG$.
\begin{LEM}[Weak maximum principle with $\pm \cG$]\label{prop:weakp-scalar-G}\it
Consider scalar conservation laws. If 
\begin{equation}\label{eq:CFL-G-scalar}
\lambda \al\leq c_\cG,
\end{equation}
then we have  
\begin{equation}
u_h \in B ~\text{on}~ S_K \quad\Rightarrow\quad 
\overline{ u_h \pm \Delta t \cG(u_h)}_K \in B.
\end{equation}
\end{LEM}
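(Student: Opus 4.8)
The plan is to mimic the proof of Proposition~\ref{prop:weakp-scalar} (the weak maximum principle for the DG operator $\cF$) but with the key simplification that $\cG$ uses only the \emph{interior} trace on cell interfaces. First I would write out the cell average of $\overline{u_h \pm \Delta t \cG(u_h)}_K$ by testing the definition of $\dxl f$ against $v_h \equiv 1$ on $K$, which kills the volume term $\int_K f(u_h):\nabla v_h\,\dd x$ and leaves
\begin{equation}
\overline{u_h \pm \Delta t\cG(u_h)}_K = \overline{u_h}_K \mp \frac{\Delta t}{|K|}\sum_{\ell=1}^{\nlK}\int_{e_{\ell,K}} f(u_h^\inte)\cdot\nu_{\ell,K}\,\dd l.
\end{equation}
Applying the edge quadrature rule (Proposition~\ref{assp:edge}) this becomes a combination of the interior point values $u_{\beta,\ell} = u_h^\inte(x_{\beta,\ell})$ only, with no exterior values appearing — this is exactly why the hypothesis is $u_h\in B$ on $S_K$ rather than on $\overline{S}_K$.

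Next I would invoke the convex decomposition of the cell average from Proposition~\ref{assp:CAD}, $\overline{u_h}_K = \sum_\alpha \wI_{\alpha,K} u_\alpha + \sum_{\ell,\beta}\wI_{\beta,\ell,K}u_{\beta,\ell}$, and substitute it into the expression above. The terms involving the interior volume points $u_\alpha$ are untouched; the terms involving the interface points $u_{\beta,\ell}$ combine with the flux contribution. On each edge the local quantity to analyze has the form $\wI_{\beta,\ell,K} u_{\beta,\ell} \mp \lambda\,\wB_{\beta,\ell,K}\, f(u_{\beta,\ell})\cdot\nu_{\ell,K}$ (after normalizing $\Delta t |e_{\ell,K}|/|K|$ into $\lambda$). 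The goal is to recognize this — possibly after splitting off a compensating multiple of $u_{\beta,\ell}$ — as a forward-Euler update of a scalar transport-type quantity that stays in $B=[m,M]$ under the CFL condition $\lambda\al \le c_\cG$. Here the relevant structure is the ``Lax--Friedrichs splitting'' referenced in the introduction: because the hypothesis controls both the forward step $u + \Delta t\cG(u)$ and the backward step $u - \Delta t\cG(u)$, one really needs that $u \mp \lambda\,\frac{\wB_{\beta,\ell,K}}{\wI_{\beta,\ell,K}}\big(f(u)\cdot\nu\big)$ — viewed as a single-point forward-Euler map — preserves $[m,M]$ for both signs, which holds precisely when $\lambda \frac{\wB_{\beta,\ell,K}}{\wI_{\beta,\ell,K}}\al \le 1$, i.e. $\lambda\al\le c_\cG = \min_{\beta,\ell,K}\wI_{\beta,\ell,K}/\wB_{\beta,\ell,K}$. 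The whole cell average is then exhibited as a convex combination (weights $\wI_{\alpha,K}$ on the volume points, weights $\wI_{\beta,\ell,K}$ on the modified interface terms, summing to $1$ by \eqref{eq:const1}) of quantities each lying in $B$, and convexity of $B=[m,M]$ finishes it.

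The main obstacle I expect is handling the sign: for the DG operator the numerical flux provides built-in upwind dissipation, so only the forward step needs to be bound-preserving, whereas here the central (interior-trace) flux has no dissipation and we must cover \emph{both} $+\cG$ and $-\cG$. This forces the argument to rely on a two-sided estimate at each quadrature point — essentially that $|f(u)\cdot\nu| \le \al\, \text{(distance of }u\text{ to the nearer endpoint of }[m,M])$ is \emph{not} what we have; instead we need the cleaner fact that the scalar map $u \mapsto u - a(f(u)\cdot\nu - f(v)\cdot\nu)$ is monotone for $|a|\,\al\le 1$, applied around a suitable reference state (e.g. using $f(m)$ or $f(M)$, or differencing against a constant since adding a constant flux does not change $\cG$ applied with $v_h\equiv 1$... careful here). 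Making this two-sided monotonicity argument rigorous on general meshes, and checking it does not need the exterior trace, is the crux; the bookkeeping of quadrature weights and the final convexity assembly are routine once that point is settled. I would also note that, unlike Proposition~\ref{prop:weakp-scalar}, no restriction to specific mesh types (Cartesian/triangular) is needed because there is no coupling to neighbors, which is worth remarking on.
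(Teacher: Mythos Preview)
Your primary plan---show that each interface term $u_{\beta,\ell}\mp \lambda\frac{\wB_{\beta,\ell}}{\wI_{\beta,\ell}}f(u_{\beta,\ell})\cdot\nu_\ell$ individually lies in $[m,M]$, then assemble by convexity---does not work for scalar conservation laws. The pointwise ``Lax--Friedrichs splitting'' $u\pm\al^{-1}f(u)\cdot\nu\in[m,M]$ is \emph{false} in general: take $f(u)=u$, $\nu=1$, $m=1$, $M=2$, $\al=1$; then $u-\al^{-1}f(u)=0\notin[1,2]$ already at $u=1$. So the convex-combination-of-terms-in-$B$ argument, which \emph{is} how the paper handles the Euler system in Lemma~\ref{prop:weakp-euler-G}, breaks down here because there is no scalar analogue of Proposition~\ref{prop:system-pointwise}.

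The paper's actual proof for the scalar case is the consistency-plus-monotonicity argument you half-glimpse in your last paragraph. One defines the full expression $H(\{u_\alpha\},\{u_{\beta,\ell}\})=\overline{u_h\pm\Delta t\cG(u_h)}$ and shows (i) $H(\{b\},\{b\})=b$ for any constant $b$, using the closed-boundary identity $\sum_\ell|e_\ell|\nu_\ell=\sum_\ell\int_{e_\ell}\nu_\ell\,\dd l=0$; and (ii) $\partial H/\partial u_\alpha=\wI_\alpha>0$ and $\partial H/\partial u_{\beta,\ell}=\wI_{\beta,\ell}\mp\Delta t\frac{|e_\ell|}{|K|}\wB_{\beta,\ell}\,\partial_u f(u_{\beta,\ell})\cdot\nu_\ell\ge 0$ under the CFL condition $\lambda\al\le c_\cG$. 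Then $m=H(\{m\},\{m\})\le H\le H(\{M\},\{M\})=M$. Your remark about ``differencing against a constant since adding a constant flux does not change $\cG$ applied with $v_h\equiv 1$'' is exactly the consistency step (i)---that is the missing idea you need to promote to the center of the argument, replacing the failed per-point splitting.
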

\begin{proof}
For ease of notation, we omit all subscripts $K$ when there is no confusion. Note that 
\begin{equation}\label{eq:Gbar}
\Delta t \overline{\cG(u_h)} = -\frac{\Delta t}{|K|}\sum_{\ell=1}^{\nlK}|e_{\ell}|\sum_{\beta = 1}^{{\rr}_\ell}\wB_{\beta,\ell}f(u_{\beta,\ell})\cdot \nu_{\ell} .
\end{equation}
Using the convex decomposition, we have
 \begin{equation}\label{eq:ubar}
        \overline{u}_h = \sum_{\alpha = 1}^{s}\wI_{\alpha} u_{\alpha} + \sum_{\ell=1}^{\nlK}\sum_{\beta = 1}^{{\rr}_\ell}\wI_{\beta,\ell} u_{\beta,\ell}, \qquad 0<\wI_\alpha, \wI_{\beta,\ell}<1.
    \end{equation}
    Therefore, 
\begin{equation}\label{eq:mp-H}
\begin{aligned}
    \overline{ u_h \pm \Delta t \cG(u_h)}   = \overline{u}_h \pm \Delta t \overline{\cG(u_h)} =\;& \sum_{\alpha = 1}^{{\rr}_K}\wI_{\alpha} u_{\alpha} +\sum_{\ell=1}^{\nlK}\sum_{\beta = 1}^{{\rr}_\ell} \wI_{\beta,\ell} \left(u_{\beta,\ell}\mp \Delta 
 t\frac{ |e_\ell|}{|K|}\frac{\wB_{\beta,\ell}}{ \wI_{\beta,\ell} }f(u_{\beta,\ell})\cdot \nu_{\ell}\right)\\
    :=\;& H(\{u_{\alpha}\},\{u_{\beta,\ell}\}),
    \end{aligned}
\end{equation}
Here $H(\{u_{\alpha}\},\{u_{\beta,\ell}\})$ means $H (\cdot)$ is a function depending on all nodal values of $u_h$ on $S_K$.

In below, we show the consistency and monotonicity of $H$.

First, given a constant $b$, we have the consistency $H(\{b\},\{b\})= b$. This property can be proved as follows. Using \eqref{eq:mp-H} and noting that $f(u_{\beta,\ell}) = f(b)$ is a constant, we have
\begin{align}
H(\{b\},\{b\}) 
= \overline{b} \pm \Delta t \overline{\cG(b)} 
= b \mp \frac{\Delta t}{|K|}f(b)\cdot\sum_{\ell = 1}^{l(K)} |e_{\ell}|\left(\sum_{\beta = 1}^{{\rr}_\ell} w_{\beta,\ell}^B\right) \nu_{\ell}.
\end{align}
Then we apply \eqref{eq:exact-edges} and note that $\nu_{\ell}$ is a constant on $e_\ell$. It yields
\begin{align}
 H(\{b\},\{b\}) 
 = b \mp \frac{\Delta t}{|K|}f(b)\cdot \sum_{\ell=1}^{l(K)} |e_{\ell}| \nu_{\ell} 
 = b \mp \frac{\Delta t}{|K|}f(b)\cdot \left(\sum_{\ell=1}^{l(K)} \int_{e_{\ell}}\nu_{\ell}\dd l\right) 
 = b.
\end{align}
For the last equality, we have used the identity $\sum_{\ell=1}^{l(K)} \int_{e_{\ell}}\nu_{\ell}\dd l  = 0$.  

Second, note that 
\begin{equation}
    \frac{\partial H}{\partial u_{\alpha}} = w_{\alpha}>0.
\end{equation}
With the time step constraint \eqref{eq:CFL-G-scalar}, we have
\begin{equation}
    \frac{\partial H}{\partial u_{\beta,\ell}} = \wI_{\beta,\ell} \mp  \Delta t \frac{|e_\ell|}{|K|}{\wB_{\beta,\ell}}\partial_u f(u_{\beta,\ell})\cdot \nu_{\beta,\ell}\geq \wI_{\beta,\ell} - \lambda \al \wB_{\beta,\ell}\geq 0.
\end{equation}
Hence $H$ is a monotonically increasing function with respect to all inputs.

Recall that $m\leq u_h \leq M$ on $S$, namely, $m\leq u_{\alpha},u_{\beta,\ell} \leq M$. Therefore, we can use the consistency and the monotonicity of $H$ to show that 
\begin{equation}
    m = H(\{m\},\{m\}) \leq H(\{u_{\alpha}\},\{u_{\beta,\ell}\}) \leq H(\{M\},\{M\})  = M. 
\end{equation}
Recall that we defined $\overline{ u_h \pm \Delta t \cG(u_h)} = H(\{u_{\alpha}\},\{u_{\beta,\ell}\})$. The proof of the lemma is completed. 
\end{proof}

\subsubsection{Hyperbolic systems: compressible Euler equations}

In Lemma \ref{prop:weakp-euler-G}, we establish a counterpart to Proposition~\ref{prop:weakp-euler} for the local derivative operator $\cG$. The proof of Lemma \ref{prop:weakp-euler-G} is based on the so-called Lax--Friedrichs splitting property, which is stated in Proposition~\ref{prop:system-pointwise}.  

\begin{PROP}[Lax--Friedrichs splitting property]\label{prop:system-pointwise}\it
For Euler equations in \eqref{eq:Euler}, with the invariant region $B$ defined in \eqref{eq:Euler-bound}, \revtwo{and for any unit vector $\nu$,} we have 
\begin{equation}
u \in B \quad\Rightarrow\quad u \pm \al^{-1}f(u)\cdot \nu \in B.
\end{equation}
\end{PROP}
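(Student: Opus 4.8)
The plan is to reduce the vector statement $u \pm \al^{-1} f(u)\cdot \nu \in B$ to scalar inequalities by checking the two defining constraints of $B$ separately: positivity of density and positivity of pressure (equivalently, internal energy). Fix a unit vector $\nu$ and write $w_\nu = w\cdot\nu$ for the normal velocity, so that $\al \ge |w_\nu| + c$ with $c = \sqrt{\gamma p/\rho}$. For the density component, the flux contributes $m\cdot\nu = \rho w_\nu$, so the candidate density is $\rho^\pm = \rho(1 \pm \al^{-1} w_\nu)$; since $|w_\nu| \le \al$ (indeed $|w_\nu| \le \al - c < \al$), we get $\rho^\pm > 0$ immediately. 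This is the easy part.

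The substantive part is the pressure/internal-energy bound for the modified state $u^\pm = u \pm \al^{-1} f(u)\cdot\nu$. I would write out the three components of $u^\pm$ explicitly using \eqref{eq:Euler}: density $\rho^\pm = \rho \pm \al^{-1}\rho w_\nu$, momentum $m^\pm = m \pm \al^{-1}(\rho w_\nu w + p\nu)$, and energy $E^\pm = E \pm \al^{-1}(E+p)w_\nu$. Then I would compute the internal energy $\rho e(u^\pm) = E^\pm - \frac{|m^\pm|^2}{2\rho^\pm}$ and show it is positive. The cleanest route is to exploit Galilean invariance and rotational invariance: choose coordinates aligned with $\nu$, and pass to a reference frame moving with normal velocity so that one can treat the essential one-dimensional structure; the internal energy $\rho e$ is frame-independent, while the transverse momentum components only enter through $|m^\pm|^2$ in a way that can be bounded. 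After the reduction, the inequality $\rho e(u^\pm) > 0$ becomes a quadratic-type estimate in the parameter $\al^{-1}$, and one uses $\al \ge |w_\nu| + c$ together with $c^2 = \gamma p/\rho$ and $\rho e = p/(\gamma-1)$ to close it. This is essentially the classical Lax--Friedrichs splitting computation; I expect the algebra to be the main obstacle, and it helps to first record the one-dimensional ($d=1$, scalar velocity) case carefully and then note that the multi-dimensional case follows because the tangential momentum is unchanged in the moving frame and $\rho e$ depends only on $\rho$, the normal momentum, and $E$ in that frame.

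An alternative, and perhaps slicker, approach is to use the known concavity of the map $u \mapsto \rho e(u)$ on $\{\rho > 0\}$ together with a representation of $u^\pm$ as related to exact solutions: $u + \al^{-1} f(u)\cdot\nu$ and $u - \al^{-1}f(u)\cdot\nu$ can be seen as the one-step forward-Euler updates of the one-dimensional problem $\partial_t u + \partial_\xi(f(u)\cdot\nu) = 0$ using upwind-type fluxes under the CFL number $1$; since $B$ is the invariant region of the Euler system, such updates preserve $B$. However, to keep the argument self-contained I would favor the direct computation: verify $\rho^\pm > 0$, then verify $p(u^\pm) > 0$ by the explicit quadratic estimate. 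I would present it as: Step 1, coordinate reduction to the normal direction; Step 2, density positivity; Step 3, the internal-energy inequality via the bound $\al \ge |w_\nu| + c$; Step 4, assembling the two to conclude $u^\pm \in B$. The hard part will be Step 3 — managing the cross terms in $|m^\pm|^2$ and showing the resulting expression is dominated by $\al^{-2}$ times the sound-speed terms — but it is a finite computation with no conceptual gap.
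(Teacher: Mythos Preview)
Your proposal is correct and follows the paper's main proof: check $\rho^\pm > 0$ from $|w_\nu| < \al$, then compute $p(u^\pm)$ explicitly and close the resulting inequality using $\al \ge |w_\nu| + c$ and $\gamma > 1$. The paper carries out this computation directly in arbitrary dimension without your suggested Galilean reduction (which, incidentally, should shift by the \emph{tangential} velocity to reduce to the one-dimensional structure, not the normal one), and also records a second, shorter proof in an appendix via the geometric quasilinearization framework of Wu--Shu, which linearizes the constraint set $B$ and sidesteps the explicit pressure algebra.
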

\begin{proof}
We first show the positivity of density. From $u\in B$, we know $\rho >0$. The sound speed $c>0$ gives $\al =\max||\omega\cdot\nu|+c|>|\omega\cdot \nu|$. By looking into the first component of $u\mp \al^{-1} f(u)\cdot \nu$, we get 
\begin{equation}
\rho \mp \al^{-1}\rho w\cdot \nu = (1\mp \al^{-1}w\cdot \nu)\rho > 0.
\end{equation}
Next, let us show the positivity of pressure. From \eqref{eq:Euler}, we have
\begin{align}
u\mp \al^{-1} f(u)\cdot \nu = 
\begin{pmatrix}
(1\mp \al^{-1}w\cdot \nu)\rho \\
(1\mp \al^{-1} w\cdot \nu)m \mp \al^{-1} p\nu \\
(1\mp \al^{-1} w\cdot \nu)E \mp \al^{-1} w\cdot \nu p
\end{pmatrix}.
\end{align}
Recall that $p = (\gamma - 1)(E-\frac{1}{2}\frac{|m|^2}{\rho})$. 
Let the bracket $[\cdot]$ below denote the dependence.
By performing a direct calculation similar to the one in \cite[Remark 2.4]{zhang2010positivity}, we have
\begin{equation}\label{eq:compLF}
\begin{aligned}
&\frac{1}{\gamma-1}p[u\mp \al^{-1} f(u)\cdot \nu] \\
=&\, (1\mp \al^{-1} w\cdot \nu)E \mp \al^{-1} w\cdot \nu p - \frac{1}{2} \frac{|(1\mp \al^{-1} w\cdot \nu)m \mp \al^{-1} p\nu|^2}{(1\mp \al^{-1}w\cdot \nu)\rho}\\
=&\, (1\mp \al^{-1} w\cdot \nu)\Big(E - \frac{1}{2}\frac{|m|^2}{\rho}\Big) 
\mp \Big(\al^{-1} w\cdot\nu p - \al^{-1}m\cdot\nu\frac{p}{\rho}\Big) 
- \frac{1}{2}\frac{p^2}{\al^2(1\mp \al^{-1} w\cdot \nu)\rho}\\
=&\, (1\mp \al^{-1} w\cdot \nu)\left(\frac{p}{\gamma-1} - \frac{1}{2(\al\mp w\cdot \nu)^2}\frac{p^2}{\rho}\right)\mp \Big(\al^{-1} w\cdot\nu p - \al^{-1}m\cdot\nu\frac{p}{\rho}\Big).
\end{aligned}
\end{equation}
Recall that $w$ is defined by $m/\rho$. Thus, the last term in \eqref{eq:compLF} vanishes, which gives
\begin{align}
p[u\mp \al^{-1} f(u)\cdot \nu] = \left(1 - \frac{\gamma-1}{2(\al\mp w\cdot\nu)^2}\frac{p}{\rho}\right)(1\mp \al^{-1} w\cdot\nu)p. 
\end{align}
Notice that $1\mp \al^{-1} w\cdot \nu>0$. We have 
\begin{align}
p[u\mp \al^{-1} f(u)\cdot \nu] > 0 ~~&\Leftrightarrow~~
\frac{(\gamma-1)}{2(\al\mp  w\cdot \nu)^2}\frac{p}{\rho} < 1 \nonumber\\ ~~&\Leftrightarrow~~ 
\gamma\frac{p}{\rho}<\frac{2\gamma}{\gamma - 1}(\al \mp w\cdot \nu)^2 ~~\Leftrightarrow~~ \sqrt{\gamma\frac{p}{\rho}} < \sqrt{\frac{2\gamma}{\gamma - 1}}(\al \mp w\cdot \nu).
\end{align}
The sound speed $c = \sqrt{\gamma p/\rho}$ implies $\al \geq ||w\cdot \nu|+c|$. Therefore, from the constant $\gamma>1$, we have $\sqrt{\frac{2\gamma}{\gamma-1}} = \sqrt{2+\frac{2}{\gamma-1}}>1$, which gives $ p[u\mp \al^{-1} f(u)] > 0$.
\end{proof}
\revone{
\begin{remark}
A simple alternative proof of Proposition~\ref{prop:system-pointwise} can be obtained using the geometric quasilinearization. We refer to Appendix~\ref{app:GQL} for details.     
\end{remark}
}
\begin{LEM}[Weak bound-preserving property with  $\pm\cG$]\label{prop:weakp-euler-G}\it
Suppose that Proposition~\ref{prop:system-pointwise} holds for a hyperbolic system. If
\begin{equation}\label{eq:CFL-Euler-G}
\lambda \al \leq c_\cG,
\end{equation}
then we have  
\begin{equation}
u_h \in B ~\text{on}~ S_K \quad\Rightarrow\quad
\overline{ u_h \pm \Delta t \cG(u_h)}_K \in B.
\end{equation}
\end{LEM}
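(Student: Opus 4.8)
The plan is to follow the structure of the proof of Lemma~\ref{prop:weakp-scalar-G}, but to replace the scalar consistency/monotonicity argument --- which does not extend to systems --- by a convexity argument built on the pointwise Lax--Friedrichs splitting of Proposition~\ref{prop:system-pointwise}. First I would drop the subscript $K$ and, exactly as in \eqref{eq:Gbar}--\eqref{eq:mp-H} (now applying the convex decomposition \eqref{eq:cd} componentwise to the vector-valued polynomial $u_h$), rewrite the stage cell average as
\[
\overline{u_h \pm \Delta t\,\cG(u_h)}
= \sum_{\alpha=1}^{{\rr}_K}\wI_\alpha\,u_\alpha
+ \sum_{\ell=1}^{\nlK}\sum_{\beta=1}^{{\rr}_\ell}\wI_{\beta,\ell}\Big(u_{\beta,\ell}\mp \mu_{\beta,\ell}\,f(u_{\beta,\ell})\cdot\nu_\ell\Big),
\qquad \mu_{\beta,\ell}:=\Delta t\,\frac{|e_\ell|}{|K|}\,\frac{\wB_{\beta,\ell}}{\wI_{\beta,\ell}},
\]
where by \eqref{eq:const1} the weights $\wI_\alpha,\wI_{\beta,\ell}$ are positive and sum to one.

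Next I would bound $\mu_{\beta,\ell}$: since $\Delta t\,|e_\ell|/|K|\le\lambda$ and the CFL condition \eqref{eq:CFL-Euler-G} gives $\lambda\al\le c_\cG\le \wI_{\beta,\ell}/\wB_{\beta,\ell}$ for every $\beta,\ell$, it follows that $0\le \mu_{\beta,\ell}\,\al\le 1$. The key algebraic identity is then the convex splitting
\[
u_{\beta,\ell}\mp\mu_{\beta,\ell}\,f(u_{\beta,\ell})\cdot\nu_\ell
=\big(1-\mu_{\beta,\ell}\al\big)\,u_{\beta,\ell}
+\mu_{\beta,\ell}\al\,\big(u_{\beta,\ell}\mp\al^{-1}f(u_{\beta,\ell})\cdot\nu_\ell\big).
\]
Because $u_h\in B$ on $S_K$ we have $u_{\beta,\ell}\in B$, and Proposition~\ref{prop:system-pointwise} applied with the unit vector $\nu_\ell$ gives $u_{\beta,\ell}\mp\al^{-1}f(u_{\beta,\ell})\cdot\nu_\ell\in B$; since $B$ is convex, the right-hand side lies in $B$. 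Hence every boundary term $u_{\beta,\ell}\mp\mu_{\beta,\ell}f(u_{\beta,\ell})\cdot\nu_\ell$ lies in $B$, as does each $u_\alpha$.

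Finally, the displayed formula exhibits $\overline{u_h\pm\Delta t\,\cG(u_h)}$ as a convex combination of points of $B$, so it lies in $B$, which proves the lemma. I do not expect a genuine obstacle here: the essential content --- the pointwise splitting property --- is already provided by Proposition~\ref{prop:system-pointwise}, and the remaining work is only to check that $\mu_{\beta,\ell}\al\le 1$ under the stated CFL and to invoke convexity of the admissible set in place of the scalar monotonicity argument. Since the proof uses nothing about the Euler flux beyond Proposition~\ref{prop:system-pointwise}, it applies verbatim to any hyperbolic system satisfying that proposition, which is exactly the generality in which the lemma is stated.
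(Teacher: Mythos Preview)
Your proposal is correct and follows essentially the same approach as the paper. The paper writes the decomposition \eqref{eq:G-Euler-convex} directly in the expanded three-sum form, whereas you first group by $\wI_{\beta,\ell}$ and then perform the inner splitting $u_{\beta,\ell}\mp\mu_{\beta,\ell}f(u_{\beta,\ell})\cdot\nu_\ell=(1-\mu_{\beta,\ell}\al)u_{\beta,\ell}+\mu_{\beta,\ell}\al(u_{\beta,\ell}\mp\al^{-1}f(u_{\beta,\ell})\cdot\nu_\ell)$; these are algebraically identical, and both conclude by invoking Proposition~\ref{prop:system-pointwise} and the convexity of $B$.
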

\begin{proof}
    Similar to the proof in Lemma \ref{prop:weakp-scalar-G}, we omit all subscripts $K$ when there is no confusion. With a similar computation, we can acquire \eqref{eq:Gbar} and \eqref{eq:ubar} for the hyperbolic system. It gives
  \begin{equation}\label{eq:G-Euler-convex}
\begin{aligned}
    \overline{ u_h \pm \Delta t \cG(u_h)}
    &= \sum_{\alpha = 1}^{{\rr}_K}\wI_{\alpha} u_{\alpha} +\sum_{\ell=1}^{\nlK}\sum_{\beta = 1}^{{\rr}_\ell} \left(\wI_{\beta,\ell}-\Delta 
 t\frac{ |e_\ell|}{|K|} \al{\wB_{\beta,\ell}}\right)u_{\beta,\ell}\\
    &+\sum_{\ell=1}^{\nlK}\sum_{\beta = 1}^{{\rr}_\ell} \left(\Delta 
 t\frac{ |e_\ell|}{|K|}\al{\wB_{\beta,\ell}}\right)\left(u_{\beta,\ell}\mp \al^{-1}f(u_{\beta,\ell})\cdot \nu_{\ell}\right).
\end{aligned}
\end{equation}
Invoking Proposition~\ref{prop:system-pointwise}, we have $u_{\beta,\ell}\mp {\al^{-1}}f(u_{\beta,\ell})\cdot \nu_{\ell} \in B$. Under the CFL condition \eqref{eq:CFL-Euler-G},  $\wI_{\alpha}$, $\wI_{\beta,\ell}-\Delta 
 t\frac{|e_\ell|}{|K|} a_0{\wB_{\beta,\ell}}$, and $\Delta 
 t\frac{|e_\ell|}{|K|}\al{\wB_{\beta,\ell}}$ are nonnegative numbers in $[0,1]$ summing to $1$. Therefore, \eqref{eq:G-Euler-convex} indicates that $\overline{ u_h \pm \Delta t \cG(u_h)}_K$ is a convex combination of  $u_{\alpha},u_{\beta,\ell}, u_{\beta,\ell}\mp {\al^{-1}}f(u_{\beta,\ell})\cdot \nu_{\ell}\in B$. Since $B$ is a convex region, we have $\overline{ u_h \pm \Delta t \cG(u_h)}\in B$. 
\end{proof}

\subsection{Bound-preserving framework for cRKDG schemes}
In this section, we consider discretization of scalar hyperbolic conservation laws with the invariant region $B = [m,M]$ and hyperbolic systems with the invariant regions $B$ defined accordingly. When referring to the scaling limiters that preserve bounds over $S_K$, we refer to the limiter in Proposition~\ref{prop:limiter-scalar} for scalar conservation laws and the limiters in \eqref{eq:Euler-limiter1} and \eqref{eq:Euler-limiter2} for Euler equations. For other hyperbolic systems, we refer to the limiters in \cite{xing2010positivity,xing2013positivity} for shallow water equations and  \cite{qin2016bound} for relativistic hydrodynamics. Moreover, we assume Propositions \ref{assp:edge} and \ref{assp:CAD} are valid.

\begin{LEM}\label{lem:bp-stage}\it
Consider the $i$th stage of a cRKDG method $u_h^{(i)}$. Suppose \revthr{that the cell average of} $u_h^{(i)}$ can be written as a convex combination of \revthr{those of} forward-Euler steps with $\cF$ and $\pm \cG$ for previous stages. In other words, it can be written as a convex combintation of 
\begin{equation}\label{eq:FEs}
\revthr{\overline{u_h^{(j)} + \Delta t \mu_{ij}^\cF \cF\left(u_h^{(j)}\right)}} \quand \revthr{\overline{u_h^{(j)} \pm \Delta t \mu_{ij}^{\pm\cG} \cG\left(u_h^{(j)}\right)}}\quad \text{with} \quad j<i, \quad \mu_{ij}^\cF, \mu_{ij}^{\pm\cG}>0.
\end{equation} 
Consequently, if
\begin{equation}\label{eq:CFL}
\lambda \al
\leq \min_{j<i}\left(\frac{c_\cF}{\mu_{ij}^\cF},\frac{c_\cG}{{\mu_{ij}^{\pm\cG}}}\right),
\end{equation}
then we have 
\begin{equation}
u_h^{(j)} \in B ~\text{on}~ \overline{S}_K ~\text{for all}~ j<i
\quad\Rightarrow\quad 
{\overline{u}}_{h,K}^{(i)}\in B  ~\text{on}~ S_K.
\end{equation}
\end{LEM}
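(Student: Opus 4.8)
The plan is to combine the pointwise weak bound-preserving results for the three elementary building blocks (Proposition~\ref{prop:weakp-scalar}/\ref{prop:weakp-euler} for $\cF$, Lemma~\ref{prop:weakp-scalar-G}/\ref{prop:weakp-euler-G} for $\pm\cG$) with the convexity of $B$ and the stated hypothesis that $\overline{u}_{h,K}^{(i)}$ is a convex combination of cell averages of forward-Euler steps of the form \eqref{eq:FEs}. So first I would write the hypothesis explicitly: there exist nonnegative weights $\gamma_{ij}^\cF$, $\gamma_{ij}^{+\cG}$, $\gamma_{ij}^{-\cG}$ (possibly together with a weight on $\overline{u}_h^{(j)}$ itself, which is the $\mu\to 0$ limiting case and is trivially in $B$) summing to $1$, such that
\begin{equation*}
\overline{u}_{h,K}^{(i)} = \sum_{j<i}\Bigl(\gamma_{ij}^\cF\,\overline{u_h^{(j)}+\Delta t\mu_{ij}^\cF\cF(u_h^{(j)})}_K + \gamma_{ij}^{+\cG}\,\overline{u_h^{(j)}+\Delta t\mu_{ij}^{+\cG}\cG(u_h^{(j)})}_K + \gamma_{ij}^{-\cG}\,\overline{u_h^{(j)}-\Delta t\mu_{ij}^{-\cG}\cG(u_h^{(j)})}_K\Bigr).
\end{equation*}

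Next I would verify that each summand lies in $B$. For a term involving $\cF$ with multiplier $\mu_{ij}^\cF$: the effective CFL number is $\mu_{ij}^\cF\lambda\al$, and the CFL hypothesis \eqref{eq:CFL} gives $\mu_{ij}^\cF\lambda\al\le c_\cF$, so Proposition~\ref{prop:weakp-scalar} (scalar) or Proposition~\ref{prop:weakp-euler} (Euler) applies with time step $\mu_{ij}^\cF\Delta t$, using that $u_h^{(j)}\in B$ on $\overline{S}_K$, to conclude $\overline{u_h^{(j)}+\Delta t\mu_{ij}^\cF\cF(u_h^{(j)})}_K\in B$. Similarly, for a $\pm\cG$ term with multiplier $\mu_{ij}^{\pm\cG}$, the hypothesis gives $\mu_{ij}^{\pm\cG}\lambda\al\le c_\cG$, so Lemma~\ref{prop:weakp-scalar-G} (scalar) or Lemma~\ref{prop:weakp-euler-G} (Euler) applies with time step $\mu_{ij}^{\pm\cG}\Delta t$ and using only $u_h^{(j)}\in B$ on $S_K$ — which is weaker and implied by $u_h^{(j)}\in B$ on $\overline{S}_K$ — to conclude that cell average is in $B$. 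One small point to note in passing is that the $\cF$-blocks need the stronger hypothesis $u_h^{(j)}\in B$ on $\overline{S}_K$ (including exterior traces) whereas the $\pm\cG$-blocks need only $S_K$; since we are given the former for all $j<i$, both are covered.

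Finally, since $B$ is convex (a fact stated for the scalar case $B=[m,M]$ and for the Euler case \eqref{eq:Euler-bound}, and assumed for the general systems), a convex combination of points of $B$ lies in $B$; hence $\overline{u}_{h,K}^{(i)}\in B$. The conclusion "$\overline{u}_{h,K}^{(i)}\in B$ on $S_K$" should be read as the cell-average membership $\overline{u}_{h,K}^{(i)}\in B$ (the "on $S_K$" phrasing is the standing convention once the limiter is to be applied), so nothing further is needed beyond the convexity step. I do not expect a genuine obstacle here: the content of the lemma is essentially the bookkeeping that the multipliers $\mu_{ij}^\cF,\mu_{ij}^{\pm\cG}$ rescale the effective CFL number, and that scaling $\Delta t\mapsto\mu\Delta t$ is exactly what the building-block propositions allow. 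The only thing to be careful about is matching the CFL constants correctly — i.e. that it is $\mu_{ij}^\cF\lambda\al\le c_\cF$ (not $\lambda\al\le c_\cF$) that is needed — which is precisely what the $\min_{j<i}$ over $c_\cF/\mu_{ij}^\cF$ and $c_\cG/\mu_{ij}^{\pm\cG}$ in \eqref{eq:CFL} encodes.
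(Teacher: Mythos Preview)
Your proposal is correct and follows essentially the same approach as the paper: apply the building-block results (Propositions~\ref{prop:weakp-scalar}, \ref{prop:weakp-euler} and Lemmas~\ref{prop:weakp-scalar-G}, \ref{prop:weakp-euler-G}) to each forward-Euler term under the rescaled CFL condition, then invoke convexity of $B$ to conclude that the convex combination $\overline{u}_{h,K}^{(i)}$ lies in $B$. Your version is somewhat more explicit about the distinction between $S_K$ and $\overline{S}_K$ and about the rescaling $\Delta t\mapsto\mu\Delta t$, but the substance is the same as the paper's short proof.
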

\begin{proof}
Note that under the given time step constraint \eqref{eq:CFL}, from Propositions~\ref{prop:weakp-scalar}, \ref{prop:weakp-scalar-G}, \ref{prop:weakp-euler}, and \ref{prop:weakp-euler-G}, we have 
\begin{equation}
\overline{u_h^{(j)} + \Delta t \mu_{ij}^\cF \cF(u_h^{(j)})}_K \in B \quand \overline{u_h^{(j)} \pm \Delta t \mu_{ij}^\mathrm{\pm\cG} \cG(u_h^{(j)})}_K\in B.
\end{equation}
As a result, using the fact that $\overline{\sum_{i = 1}^j c_i v_i}_K = \sum_{i = 1}^jc_i \overline{v_i}_K$, one can see that the cell average of the convex combinations of the forward-Euler steps \eqref{eq:FEs} will also be in the region $B$. Thus, $u_h^{(i)}$ satisfies the weak bound-preserving property. 
\end{proof}
\revthr{With Lemma~\ref{lem:bp-stage}, we can prove the main result of the paper, Theorem~\ref{thm:bpcrkdg}, whose detailed proof is provided in Appendix~\ref{app:proofmain}.}
\begin{THM}[Bound-preserving cRKDG method]\label{thm:bpcrkdg}\it
Consider a cRKDG method. Suppose \revthr{that, without using limiters,} each of its stages can be written as a convex \revtwo{combination} of forward-Euler steps involving $\cF$ and $\pm \cG$ for previous stages. Under the CFL condition 
\begin{equation}
    \lambda \al \leq \min_{i,j} \left( \frac{c_\cF}{\mu_{ij}^\cF},\frac{c_\cG}{\mu_{ij}^{\pm\cG}}\right),
\end{equation}
if we apply the corresponding scaling limiter after each RK stage, then we have 
\begin{equation}
u_h^n \in B ~\text{on}~ S_K ~\text{for all}~ K 
\quad\Rightarrow\quad 
u_h^{n+1} \in B ~\text{on}~ S_K ~\text{for all}~ K.
\end{equation}
\end{THM}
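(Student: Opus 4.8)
The plan is to induct on the Runge--Kutta stage index and interleave the weak bound-preserving property from Lemma~\ref{lem:bp-stage} with the pointwise scaling limiters. Concretely, I would track two invariants as the computation proceeds: (a) after forming stage $i$ but before limiting, the \emph{cell average} $\overline{u}_{h,K}^{(i)}$ lies in $B$ for every $K$; (b) after applying the scaling limiter to stage $i$, the solution polynomial satisfies $u_h^{(i)}\in B$ on $S_K$ for every $K$, i.e.\ every nodal value in the volume set and every \emph{interior} trace at the interface quadrature points lies in $B$, while the cell average is unchanged.

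The base case is the hypothesis $u_h^n\in B$ on $S_K$ for all $K$; identifying $u_h^{(0)}=u_h^n$ gives invariant (b) at $i=0$ for free. For the inductive step, suppose (b) holds for all stages $j<i$. The subtlety is that Lemma~\ref{lem:bp-stage} (and the underlying Propositions~\ref{prop:weakp-scalar}, \ref{prop:weakp-scalar-G}, \ref{prop:weakp-euler}, \ref{prop:weakp-euler-G}) requires $u_h^{(j)}\in B$ on $\overline{S}_K$ for the $\cF$ building blocks, not merely on $S_K$ --- that is, the \emph{exterior} traces $u_h^{(j),\exte}(x_{\beta,\ell,K})\in B$ are also needed. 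But the exterior trace of $u_h^{(j)}$ on $e_{\ell,K}$ is the interior trace of $u_h^{(j)}$ viewed from the neighboring cell, which is controlled by invariant (b) applied to that neighbor. Hence (b) holding for \emph{all} cells at stages $j<i$ is exactly what upgrades the per-cell statement ``$u_h^{(j)}\in B$ on $S_K$'' to the global statement ``$u_h^{(j)}\in B$ on $\overline{S}_K$'' needed to invoke Lemma~\ref{lem:bp-stage}. Once that is in place, the CFL condition $\lambda a_0\le\min_{i,j}\bigl(c_\cF/\mu_{ij}^\cF,\,c_\cG/\mu_{ij}^{\pm\cG}\bigr)$ is stronger than the stage-wise condition \eqref{eq:CFL}, so Lemma~\ref{lem:bp-stage} yields invariant (a) for stage $i$: $\overline{u}_{h,K}^{(i)}\in B$.

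Next I would apply the corresponding scaling limiter (Proposition~\ref{prop:limiter-scalar} in the scalar case, \eqref{eq:Euler-limiter1}--\eqref{eq:Euler-limiter2} for Euler, the analogues in \cite{xing2010positivity,xing2013positivity,qin2016bound} otherwise) to $u_h^{(i)}$ on each $K$. The limiter preserves the cell average and, given $\overline{u}_{h,K}^{(i)}\in B$ (or in $B^\epsilon$ after the harmless $\epsilon$-relaxation), guarantees the limited polynomial lies in $B$ at every point of $S_K$ --- in particular at all interior interface nodes $u_{\beta,\ell}$. This re-establishes invariant (b) at stage $i$, closing the induction. Running the induction up to $i=s$ and then applying the final update \eqref{eq:crkdg-2}: since \eqref{eq:crkdg-2} has the same structure as a stage (a convex combination of $\cF$/$\pm\cG$ forward-Euler steps on the already-limited stages $u_h^{(1)},\dots,u_h^{(s)}$ and $u_h^n$, all of which now satisfy the property on $\overline{S}_K$ by invariant (b) globally), one more application of Lemma~\ref{lem:bp-stage} gives $\overline{u}_{h,K}^{n+1}\in B$, and one more limiter application delivers $u_h^{n+1}\in B$ on $S_K$ for all $K$.

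I expect the main obstacle --- really the only conceptual point --- to be the $S_K$-versus-$\overline{S}_K$ bookkeeping just described: making precise that the limiter only certifies membership in $B$ at the points of $S_K$ (interior traces), whereas the weak bound-preserving propositions for $\cF$ consume exterior traces too, and that these exterior traces are supplied by neighboring cells' interior-trace guarantees. This is why the limiter must be applied after \emph{every} RK stage (so every cell's interface data is admissible before the next stage's $\cF$-evaluation) rather than only at the end of the step, and it is also why the hypothesis and conclusion of the theorem are phrased on $S_K$ while the internal machinery temporarily needs $\overline{S}_K$. Everything else is a routine combination of convexity (Lemma~\ref{lem:bp-stage}) and the accuracy-preserving limiter lemmas already stated. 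I would also remark in passing that the convex-combination representation of each stage is a hypothesis of the theorem, verified separately for the specific RK schemes (e.g.\ the third-order scheme and the four-stage fourth-order scheme) in the relevant section/appendix, so the proof here treats it as given.
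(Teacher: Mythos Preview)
Your induction skeleton and the $S_K$-versus-$\overline{S}_K$ bookkeeping are correct, but you have missed what the paper singles out as the \emph{main} technical difficulty (Appendix~\ref{app:proofmain}). The theorem's hypothesis is that each stage admits a convex decomposition into forward-Euler steps \emph{for the scheme without limiters}. Once limiters are interleaved, the scheme changes: stage $i$ is computed from the \emph{limited} predecessors, $u_h^{(i)} = \tilde{u}_h^{n} + \Delta t \sum_{j<i} a_{ij}\,\cG(\tilde{u}_h^{(j)})$. The algebraic derivation of the convex decomposition (e.g.\ \eqref{eq:rk2-decomp}, \eqref{eq:rk3-decomp-1}--\eqref{eq:rk3-decomp-2}) works by substituting the defining relation $u_h^{(j)} = u_h^{n} + \Delta t \sum_{\ell<j} a_{j\ell}\,\cG(u_h^{(\ell)})$ back in, but the limited object $\tilde{u}_h^{(j)}$ does \emph{not} satisfy that relation --- only the pre-limiter $u_h^{(j)}$ does. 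If you redo the algebra with limiters present, the building blocks that emerge have the mismatched form $u_h^{(j)} \pm \Delta t\,\mu_{ij}^{\pm\cG}\,\cG(\tilde{u}_h^{(j)})$: the state is the \emph{unlimited} $u_h^{(j)}$ while the operator acts on the \emph{limited} $\tilde{u}_h^{(j)}$. This is not a genuine forward-Euler step, so the weak bound-preserving Propositions~\ref{prop:weakp-scalar}, \ref{prop:weakp-euler} and Lemmas~\ref{prop:weakp-scalar-G}, \ref{prop:weakp-euler-G} do not apply to it, and your direct appeal to Lemma~\ref{lem:bp-stage} is premature. (The paper makes this concrete for the second-order case by contrasting ``Scheme~1L'' and ``Scheme~2L'' in \eqref{eq:scheme1l}--\eqref{eq:bp-limit}.)

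The missing step is this: first prove that the mixed decomposition above actually holds for the limited scheme (this is the content of the paper's Proposition~\ref{prop:inner-stage}), and then observe that the scaling limiter preserves cell averages, so $\overline{u_h^{(j)}} = \overline{\tilde{u}_h^{(j)}}$ and hence
\[
\overline{u_h^{(j)} \pm \Delta t\,\mu_{ij}^{\pm\cG}\,\cG(\tilde{u}_h^{(j)})}
=\overline{\tilde{u}_h^{(j)} \pm \Delta t\,\mu_{ij}^{\pm\cG}\,\cG(\tilde{u}_h^{(j)})}.
\]
At the level of cell averages the mismatch disappears, the building blocks become legitimate forward-Euler cell averages of limited data $\tilde{u}_h^{(j)}\in B$ on $\overline{S}_K$, and only then does Lemma~\ref{lem:bp-stage} apply. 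Once you insert this argument your induction closes; without it there is a genuine gap, and your claim that the $S_K$/$\overline{S}_K$ bookkeeping is ``really the only conceptual point'' is not accurate.
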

\revone{
\begin{remark}[CFL condition for stability] In the analysis of this paper, the time step constraint is imposed solely to ensure the weak bound-preserving property of the cRKDG method. In practice, however, the time step must also satisfy the stability requirement. The CFL numbers for the $(k+1)$th-order cRKDG methods with $\IP^k$ elements, where $k = 1, 2, 3$, are 0.333, 0.178, and 0.103, respectively \cite{chen2024runge}. These values are obtained via a Fourier-type analysis. For comparison, the corresponding CFL numbers ensuring stability for the RKDG methods are 0.333, 0.209, and 0.145, respectively \cite{cockburn2001runge}.
\end{remark}}

\revthr{
\begin{remark}[Applying limiter after each stage]
We clarify that, the limiter is applied after each cRKDG stage, not after each forward-Euler step. The decomposition of an RK stage into a convex combination of forward-Euler steps is used solely to prove the weak bound-preserving property of each stage and is not required in the implementation of the solver.
\end{remark}}
\begin{remark}[Compactness]\it
    Since the scaling limiter requires only local information on each mesh cell, the proposed bound-preserving cRKDG method shares the same stencil as the original cRKDG method and preserves its compactness.
\end{remark}
\begin{remark}[Extension to other hyperbolic systems]\it\label{rmk:other_systems}
Although we have been focusing on the Euler equations, the idea can be naturally extended to other hyperbolic systems, such as the shallow water equations and the relativistic hydrodynamics. As can be seen, the Lax--Friedrichs splitting property is central to the weak bound-preserving property of $\pm\cG$. For a different hyperbolic system, if we can prove similar propositions as those in Propositions~\ref{prop:weakp-euler} and \ref{prop:system-pointwise}, then we can design the bound-preserving cRKDG schemes for the new system. 
\par
For shallow water equations, the weak positivity with $\cF$ has been proved in \cite{xing2010positivity,xing2013positivity} over 1D or 2D Cartesian or triangular meshes; the Lax--Friedrichs splitting property can be proved by following similar lines as those in Proposition~\ref{prop:system-pointwise}.
\par
For relativistic hydrodynamics, the weak bound-preserving property for $\cF$ has been proved in \cite{qin2016bound} for Cartesian meshes; the proof of the Lax--Friedrichs splitting property can be found at \cite[Lemma 2.3]{wu2015high} and also in \cite{qin2016bound}. 
\end{remark}
\begin{remark}[Extension to compressible Navier--Stokes equations]\it
The same discussion also applies to compressible Navier--Stokes equations with positivity-preserving numerical flux in \cite{zhang2017positivity}. The design of the cRKDG method for convection-diffusion equations with applications to the Navier--Stokes equations along with its positivity-preserving strategy will be discussed in future work.  
\end{remark}

We end this section by mentioning that the fully discrete cRKDG scheme in Theorem~\ref{thm:bpcrkdg} preserves the local conservation in the sense that
\begin{equation}
\overline{u}_{h,K}^{n+1} = \overline{u}_{h,K}^{n} -\frac{\Delta t}{|K|}\sum_{\ell = 1}^{l(K)} \widetilde{\int}_{e_{\ell,K}} \left(\sum_{i = 1}^sb_i \widehat{f\cdot\nu_{\ell,K}}^{(i)}\right)\dd l. 
\end{equation}
Here, $\widetilde{\int}\cdot \dd l$ is the numerical quadrature in Proposition~\ref{assp:edge} for approximating the edge integral, and $\widehat{f\cdot\nu_{\ell,K}}^{(i)}$ is the numerical flux computed with the $i$th stage solution $u_h^{(i)}$. As a result, the cRKDG scheme also preserves the global conservation 
\begin{align}
\int_\Omega u_h^{n+1} \dd x = \int_\Omega u_h^{n} \dd x.
\end{align}
\begin{remark}[Extension to higher dimensions]\it
Although we primarily focus on 1D and 2D cases, the results in this paper can be easily extended to higher dimensions. 
\end{remark}

\subsection{Choices of RK methods}

In this section, we construct RK methods in Butcher form, where the stages are expressed as convex combinations of the forward-Euler steps \eqref{eq:FEs}. We determine RK coefficients that result in less restrictive CFL constraints as prescribed by Theorem \ref{thm:bpcrkdg}. Our focus is exclusively on RK methods with nonnegative coefficients. Hence, $\alpha$, $\beta$, $a$, and $b$, possibly with subscripts, all represent nonnegative numbers in this section. For simplicity, we will omit the subscript $h$. All decimal numbers are rounded to four decimal places.

Note that \( c_\cF \) and \( c_\cG \) may vary depending on the settings. To avoid unnecessary complications and to design RK methods suitable for general cases, we do not distinguish between operators \(\cF\) and \(\pm\cG\) and instead minimize the largest possible \(\mu_{ij}^\cF\) and \(\mu_{ij}^{\pm \cG}\).
This parameter search is similar in flavor to the maximization of SSP coefficients in the design of optimal SSP-RK methods \cite{gottlieb2011strong}. 

We note that despite efforts to maximize the time step size in Theorem \ref{thm:bpcrkdg}, which provides a more relaxed theoretical constraint for bound preservation, the corresponding \(\Delta t\) will not be used in our numerical experiments. Instead, an adaptive time-marching strategy, as outlined in Section~\ref{sec:timestepsize}, is employed.  

\subsubsection{The second-order case}
The cRKDG scheme with the generic second-order RK time integrator takes the form:
\begin{subequations}\label{eq:RK2}
\begin{align}
u^{(2)} &= u^n + a_{21}\Delta t \cG(u^n),\label{eq:rk2-1}\\
u^{n+1} &= u^n + \Delta t \left(b_1 \cF(u^n) + b_2 \cF(u^{(2)})\right).\label{eq:rk2-2}
\end{align}
\end{subequations}
Here the parameters $a_{21}$, $b_1$ and $b_2$ are RK coefficients. 

Our first task is to write \eqref{eq:RK2} as a convex combination of \eqref{eq:FEs}. Note that the first-stage \eqref{eq:rk2-1} is already in the form of a forward-Euler step. We only need to decompose \eqref{eq:rk2-2}. We add and subtract the term $\alpha u^{(2)}$, with $\alpha$ being an underdetermined coefficient. By invoking the definition of $u^{(2)}$ in \eqref{eq:rk2-1}, we have 
\begin{equation}\label{eq:rk2-decomp}
\begin{aligned}
u^{n+1} &= u^n -\alpha u^{(2)} + \Delta t b_1 \cF(u^n) + \alpha u^{(2)} + \Delta t b_2 \cF(u^{(2)})\\
&= \left(\left(1-\alpha\right)u^n - \Delta t \alpha a_{21}  \cG(u^n) +  \Delta t b_1 \cF(u^n)\right) + \alpha u^{(2)} + \Delta t b_2 \cF(u^{(2)})\\
&= \left(1-\alpha\right)\left(u^n - \Delta t \frac{\alpha a_{21}}{1-\alpha}  \cG(u^n) +  \Delta t \frac{b_1}{1-\alpha} \cF(u^n)\right) + \alpha \left(u^{(2)} + \Delta t \frac{b_2}{\alpha} \cF(u^{(2)})\right).
\end{aligned}
\end{equation}
Further splitting $u^n$ gives
\begin{equation}\label{eq:rk2decomp}
\begin{aligned}
	u^{n+1} = &\left(1-\alpha\right)\beta\left(u^n -  \frac{\Delta t \alpha a_{21}}{(1-\alpha)\beta}  \cG(u^n)\right) + \left(1-\alpha\right)(1-\beta)\left(u^n + \frac{\Delta t  b_1}{(1-\alpha)(1-\beta)} \cF(u^n)\right)\\
 &+ \alpha \left(u^{(2)} + \frac{\Delta t b_2}{\alpha} \cF(u^{(2)})\right).
 \end{aligned}
 \end{equation}
Now, with the parameters $0\leq \alpha,\beta\leq 1$, $u^{n+1}$ has also been written as a convex combination of the forward-Euler steps \eqref{eq:FEs}.

Then, we minimize the coefficients $\mu_{ij}^\cF$ and $\mu_{ij}^{\pm\cG}$. \revone{There are three types of constraints that must be considered. First, \eqref{eq:rk2decomp} must be a convex combination of forward-Euler steps, which requires $0 \leq \alpha, \beta \leq 1$. Second, the forward-Euler step with the DG operator $\cF$ must advance the solution forward in time; in addition to the previous condition, this requires $0 \leq b_1, b_2$. Finally, to achieve second-order temporal accuracy, the RK method must satisfy the order conditions $b_1 + b_2 = 1$ and $b_2 a_{21} = \frac{1}{2}$, which can be derived by matching the coefficients in the Taylor expansion of the discrete solution with those of the continuous solution. For further details, we refer to \cite[Section II]{hairer1993solving}.}

\revone{With the aforementioned constraints, we have the following optimization problem:}
\begin{subequations}\label{eq:opt-2-0}
\begin{align}	&\min_{\alpha,\beta,a_{21},b_1,b_2} \max\left\{a_{21},\frac{\alpha a_{21}}{(1-\alpha)\beta},\frac{b_{1}}{(1-\alpha)(1-\beta)},\frac{b_2}{\alpha}\right\}\label{eq:opt-obj-2}\\
  &\text{subject to} \quad  0\leq \alpha,\beta \leq 1, \quad 0\leq b_1,b_2,\quad b_1 + b_2 = 1, \quad b_2a_{21} = \frac{1}{2}\label{eq:opt-cstr-2}.
\end{align}
\end{subequations}
Here, the function value becomes infinity when its denominator vanishes. With the observation 
\begin{equation}
\min_{0\leq \beta\leq 1} \max\left(\frac{a}{\beta}, \frac{b}{1-\beta}\right) = a+b,
\end{equation} we can get rid of $\beta$ and reduce the optimization problem \eqref{eq:opt-obj-2} and \eqref{eq:opt-cstr-2} as 
\begin{subequations}\label{eq:opt-2}
\begin{align}
	&\min_{\alpha,a_{21},b_1,b_2} \max\left\{a_{21},\frac{\alpha a_{21}+b_1}{1-\alpha},\frac{b_2}{\alpha}\right\}\label{eq:opt-obj-2-1}
\\
  &\text{subject to}\quad  0\leq \alpha \leq 1, \quad 0\leq b_1,b_2, \quad b_1 + b_2 = 1, \quad b_2a_{21} = \frac{1}{2}. 
\end{align}    
\end{subequations}
We use the $\texttt{fminimax}$ function in MATLAB to solve the minimax constraint problem. The optimal solution is non-unique. A simple one we found is 
\begin{equation}
	\alpha = \sqrt{3} -1, \quad a_{21} = \frac{1}{2},\quad b_1 = 0, \quad b_2 = 1.  
\end{equation}
The objective function \eqref{eq:opt-obj-2-1} is approximately $1.3660$. This results in the explicit midpoint rule.
\noindent
\textbf{The second-order method:}
\begin{subequations}\label{eq:cRKDG2}
\begin{align}
u^{(2)} &= u^n + \frac{\Delta t }{2} \cG(u^n),\label{eq:0-rk2-1}\\
u^{n+1} &= u^n + \Delta t  \cF(u^{(2)}).\label{eq:0-rk2-2}
\end{align}
\end{subequations}
In the bound-preserving cRKDG method, a scaling limiter is applied twice --- after \eqref{eq:0-rk2-1} and \eqref{eq:0-rk2-2}, respectively --- to enforce the pointwise bounds of the numerical solution. The method is typically coupled with $\IP^1$- or $\IQ^1$-DG method to achieve overall second-order accuracy.
\begin{remark}\it
Note in general, we have the decomposition 
\begin{equation}
u + \Delta t \mu^{\cF} \cF(u) \pm \Delta t \mu^{\pm \cG} \cG(u) = \beta\left(u + \Delta t \frac{\mu^\cF}\beta \cF(u) \right) +\left(1-\beta\right)\left( u +  \Delta t \frac{\mu^{\pm \cG}}{1-\beta} \cG(u)\right).
\end{equation}
When minimizing the maximum coefficients, we have 
\begin{equation}
\min_{0\leq \beta\leq 1} \max\left\{\frac{\mu^{\cF}}{\beta}, \frac{\mu^{\pm \cG}}{1-\beta}\right\} = \mu^{\cF}+\mu^{\pm \cG}.
\end{equation} 
As a result, when dealing with a forward-Euler step with both $\cF$ and $\pm\cG$, we can skip further decompositions and take the combined value $\mu^{\cF}+\mu^{\pm \cG}$ as the effective coefficient. This fact will be used in the construction of the third- and fourth-order schemes. 
\end{remark}
\revtwo{
\begin{remark}
    The key to the parameter search of the cRKDG method lies in expressing it in a generalized Shu--Osher form, as in \eqref{eq:rk2-1} and \eqref{eq:rk2decomp}. We consider it ``generalized" because we distinguish the forward-Euler steps that share the same stage $u^{(i)}$ but involve different spatial operators. 
    Compared with the standard SSP-RKDG method, the parameter search of the cRKDG method has the following constraint and flexibility. 
    \begin{enumerate}
        \item (Constraint)  When written in Butcher form, the final stage of the cRKDG method must not contain the operator $\mathcal{G}$. This restriction is equivalent to imposing additional constraints on the generalized Shu--Osher coefficients.
        \item (Flexibility) When expressed in the generalized Shu--Osher form, the cRKDG method allows the coefficients in front of $\cG$ to be negative.  
    \end{enumerate}
    The constraint tends to reduce the SSP coefficients of the cRKDG method relative to those of the standard SSP-RKDG method, resulting in a more restrictive time-step condition. On the other hand, the flexibility enables the method to overcome certain order barriers, as will be demonstrated in the fourth-order case. This flexibility is akin to allowing downwind stages in the standard SSP-RKDG scheme, which can also overcome the order barrier but may introduce complications when imposing downwind flux at the outflow boundary. However, thanks to the compactness of $\cG$, the cRKDG method does not face this issue.  
\end{remark}
}

\subsubsection{The third-order case}
Now we repeat the procedure to obtain a third-order RK scheme. The generic third-order RK scheme takes the form
\begin{subequations}
\begin{align}
	u^{(2)} &= u^n + \Delta t a_{21} \cG(u^n),\\
	u^{(3)} &= u^n + \Delta t \left(a_{31} \cG(u^n) + a_{32} \cG(u^{(2)})\right),\\
	u^{n+1} &= u^n + \Delta t \left(b_1 \cF(u^n)+b_2 \cF(u^{(2)}) + b_3 \cF(u^{(3)})\right).
\end{align}
\end{subequations}
$u^{(2)}$ has already been in the form of a forward-Euler step. The decomposition of $u^{(3)}$ can be done by following similar lines as in \eqref{eq:rk2-decomp}. 
\begin{equation}\label{eq:rk3-decomp-1}
\begin{aligned}
	u^{(3)} &= u^n -\alpha_1 u^{(2)} + \Delta t a_{31} \cG(u^n) + \alpha_1 u^{(2)} + \Delta t a_{32} \cG(u^{(2)})\\
			&= \left(1-\alpha_1\right)u^n-\Delta t \alpha_1 a_{21} \cG(u^n) + \Delta t a_{31} \cG(u^n) + \alpha_1 u^{(2)} + \Delta t a_{32}\cG(u^{(2)})\\
			&= \left(1-\alpha_1\right)\left(u^n + \Delta t\left(\frac{a_{31}-\alpha_1a_{21}}{1-\alpha_1}\right)\cG(u^n)\right) + \alpha_1\left(u^{(2)} + \Delta t \frac{a_{32}}{\alpha_1}\cG(u^{(2)})\right).
\end{aligned}
\end{equation}
The decomposition of $u^{n+1}$ is written as 
\begin{equation}\label{eq:rk3-decomp-2}
	\begin{aligned}
	u^{n+1} &= u^n + \Delta t \left(b_1 \cF(u^n)+b_2 \cF(u^{(2)}) + b_3 \cF(u^{(3)})\right)\\
			&= u^n -\alpha_2 u^{(2)} -\alpha_3 u^{(3)} + \Delta t b_1 \cF(u^n) + \alpha_2 u^{(2)} + \Delta t b_2 \cF(u^{(2)}) + \alpha_3 u^{(3)} + \Delta t b_3 \cF(u^{(3)})\\
			&= \left(1-\alpha_2-\alpha_3\right) u^n - \Delta t (\alpha_2 a_{21} + \alpha_3a_{31})\cG(u^n) - \Delta t \alpha_3a_{32} \cG(u^{(2)})\\
			&+ \Delta t b_1 \cF(u^n) + \alpha_2 u^{(2)} + \Delta t b_2 \cF(u^{(2)}) + \alpha_3 u^{(3)} + \Delta t b_3 \cF(u^{(3)})\\
			&= \left(1-\alpha_2-\alpha_3\right)\left(u^n + \Delta t \left(-\frac{\alpha_2a_{21}+\alpha_3a_{31}}{1-\alpha_2-\alpha_3}\cG(u^n)+\frac{b_1}{1-\alpha_2-\alpha_3}\cF(u^n)\right)\right)\\
			&+\alpha_2\left(u^{(2)}+\Delta t\left(-\frac{\alpha_3a_{32}}{\alpha_2}\cG(u^{(2)}) + \frac{b_2}{\alpha_2}\cF(u^{(2)})\right)\right) + \alpha_3\left(u^{(3)}+\Delta t \frac{b_3}{\alpha_3}\cF(u^{(3)})\right).
	\end{aligned}
\end{equation}
To obtain a third-order RK scheme, we need to solve the optimization problem
\begin{equation}
\min_{\alpha_1,\alpha_2,\alpha_3,a_{21},a_{31},a_{32},b_1,b_2,b_3}\max \left\{a_{21},\frac{|a_{31}-\alpha_1a_{21}|}{1-\alpha_1},\frac{a_{32}}{\alpha_1},\frac{\alpha_2a_{21}+\alpha_3a_{31}+b_1}{1-\alpha_2-\alpha_3} ,\frac{\alpha_3a_{32}+b_2}{\alpha_2},\frac{b_3}{\alpha_3}\right\}
\end{equation}
together with the order conditions and the constraints on the domain of the parameters. The optimizer is in general not unique. One simple optimal solution we found in the region
\begin{equation}
(0,0,0,0,0,0,0,0,0)\leq (\alpha_1,\alpha_2,\alpha_3,a_{21},a_{31},a_{32},b_1,b_2,b_3)\leq (1,1,1,10,10,10,1,1,1)
\end{equation}
is given by
\begin{equation}
\begin{aligned}
(\alpha_1,\alpha_2,\alpha_3,a_{21},a_{31},a_{32},b_1,b_2,b_3)
=  (0.4764, 0.2442, 0.5242,1/3,0,2/3,1/4,0,3/4).
\end{aligned}
\end{equation}
The corresponding value for the objective function is $1.4308$. It gives the Heun's third-order method. 

\noindent
\textbf{The third-order method:}
\begin{subequations}\label{eq:cRKDG3}
\begin{align}
	u^{(2)} &= u^n +  \frac{\Delta t}{3} \cG(u^n),\label{eq:cRKDG3-1}\\
	u^{(3)} &= u^n + \Delta t \frac{2}{3} \cG(u^{(2)}),\label{eq:cRKDG3-2}\\
	u^{n+1} &= u^n + \Delta t \left(\frac{1}{4} \cF(u^n)+\frac{3}{4} \cF(u^{(3)}) \right).\label{eq:cRKDG3-3}
\end{align}
\end{subequations}
In the bound-preserving cRKDG method, a scaling limiter is applied three times --- after \eqref{eq:cRKDG3-1}, \eqref{eq:cRKDG3-2}, and \eqref{eq:cRKDG3-3}, respectively --- to enforce the pointwise bounds of the numerical solution. The method is typically coupled with $\IP^2$- or $\IQ^2$-DG method to achieve overall third-order accuracy.

\subsubsection{The fourth-order case}
As before, we repeat a similar procedure to acquire a fourth-order RK scheme. The generic fourth-order RK scheme takes the following form. 
\begin{subequations}
\begin{align}
u^{(2)} &= u^n + \Delta t a_{21} \cG(u^n),\\
u^{(3)} &= u^n + \Delta t \left(a_{31} \cG(u^n) + a_{32} \cG(u^{(2)})\right),\\
u^{(4)} &= u^n + \Delta t \left(a_{41} \cG(u^n) + a_{42} \cG(u^{(2)})+a_{43}\cG(u^{(3)}\right),\\
u^{n+1} &= u^n + \Delta t \left(b_1 \cF(u^n)+b_2 \cF(u^{(2)}) + b_3 \cF(u^{(3)}) + b_4 \cF(u^{(4)}\right).
\end{align}
\end{subequations}

$u^{(2)}$ has already been in the form of a forward-Euler step. The decomposition of $u^{(3)}$ is the same as that in \eqref{eq:rk2-decomp} and \eqref{eq:rk3-decomp-1}.
\begin{equation}
\begin{aligned}
u^{(3)} 
= \left(1-\alpha_1\right)\left(u^n + \Delta t\left(\frac{a_{31}-\alpha_1a_{21}}{1-\alpha_1}\right)\cG(u^n)\right) + \alpha_1\left(u^{(2)} + \Delta t \frac{a_{32}}{\alpha_1}\cG(u^{(2)})\right).
\end{aligned}
\end{equation}
The decomposition of $u^{(3)}$ is the same as that in \eqref{eq:rk3-decomp-2}.
\begin{equation}
\begin{aligned}
    u^{(4)} 
    &= (1-\alpha_2-\alpha_3)\left(u^n+\Delta t\frac{a_{41}-\alpha_2 a_{21}-\alpha_3 a_{31}}{1-\alpha_2-\alpha_3}\cG(u^n)\right) \\
    & + \alpha_2 \left(u^{(2)} + \Delta t \frac{a_{42} -\alpha_3 a_{32}}{\alpha_2}\cG(u^{(2)})\right) + \alpha_3 \left(u^{(3)} + \Delta t \frac{a_{43}}{\alpha_3} \cG(u^{(3)})\right).
\end{aligned}
\end{equation}
Finally, $u^{n+1}$ can be decomposed as the following. 
\begin{equation}
\begin{aligned}
u^{n+1} &= u^n -\alpha_4 u^{(2)} - \alpha_5 u^{(3)} - \alpha_6 u^{(4)} + \Delta t b_1 \cF(u^n) \\
&+ \alpha_4u^{(2)} +  \Delta t b_2 \cF(u^{(2)}) + \alpha_5 u^{(3)} + \Delta t b_3 \cF(u^{(3)}) + \alpha_6 u^{(4)} + \Delta t \cF(u^{(4)}) \\
&= (1-\alpha_4-\alpha_5-\alpha_6)u^n - \Delta t (\alpha_4a_{21} + \alpha_5a_{31} + \alpha_6a_{41}) \cG(u^n) + \Delta t b_1 \cF(u^n) \\
&+ \alpha_4 u^{(2)} - \Delta t (\alpha_5 a_{32} + \alpha_6 a_{42})\cG(u^{(2)}) + b_2 \cF(u^{(2)})\\
&+\alpha_5u^{(3)}- \Delta t\alpha_6a_{43}\cG(u^{(3)}) + \Delta t b_3 \cF(u^{(3)}) +\alpha_6 u^{(4)} + \Delta t b_4 \cF(u^{(4)}).
\end{aligned}
\end{equation}
To find a fourth-order RK method with less restrictive time step constraint, we need to solve the optimization problem 
\begin{equation}
\begin{aligned}
    \min_{\alpha_i,a_{ij},b_i} \max\left\{a_{21},\frac{|a_{31}-\alpha_1a_{21}|}{1-\alpha_1},\frac{a_{32}}{\alpha_1},\frac{|a_{41}-(\alpha_2 a_{21}+\alpha_3 a_{31})|}{1-\alpha_2-\alpha_3},\frac{|a_{42}-\alpha_3a_{32}|}{\alpha_2},\frac{a_{43}}{\alpha_3},\right.\\
    \left.\frac{\alpha_4a_{21}+\alpha_5a_{31}+\alpha_6a_{41}+b_1}{1-\alpha_4-\alpha_5-\alpha_6},
\frac{\alpha_5a_{32}+\alpha_6a_{42}+b_2}{\alpha_4},\frac{\alpha_6a_{43}+b_3}{\alpha_5},\frac{b_4}{\alpha_6}\right\}
    \end{aligned}
\end{equation}
subject to the order conditions and the constraints on the searching domain. Finding the global minimum is difficult in general. One feasible solution with small objective function is 
\begin{equation}
\begin{aligned}&(\alpha_1,\alpha_2,\alpha_3,\alpha_4,\alpha_5,\alpha_6,a_{21},a_{31},a_{32},a_{41},a_{42},a_{43},b_1,b_2,b_3,b_4)\\
=\,& (0.5000, 0.2346, 0.6850,    0.3334, 0.3066, 0.1142, 1/2, 0,1/2,0,0,1,1/6,1/3,1/3,1/6).
\end{aligned}
\end{equation}
The value of corresponding objective function is $1.4598$. It gives the classic fourth-order RK method. 

\noindent
\textbf{The fourth-order method:}
\begin{subequations}\label{eq:cRKDG4}
\begin{align}
u^{(2)} &= u^n + \frac{\Delta t}{2} \cG(u^n),\label{eq:cRKDG4-1}\\
u^{(3)} &= u^n + \frac{\Delta t}{2} \cG(u^{(2)}),\label{eq:cRKDG4-2}\\
u^{(4)} &= u^n + \Delta{t}\, \cG(u^{(3)}),\label{eq:cRKDG4-3}\\
u^{n+1} &= u^n + \Delta{t} \left(\frac{1}{6} \cF(u^n)+\frac{1}{3} \cF(u^{(2)}) +\frac{1}{3} \cF(u^{(3)}) +\frac{1}{6} \cF(u^{(4)}) \right).\label{eq:cRKDG4-4}
\end{align}
\end{subequations}
In the bound-preserving cRKDG method, a scaling limiter is applied four times --- after \eqref{eq:cRKDG4-1}, \eqref{eq:cRKDG4-2}, \eqref{eq:cRKDG4-3}, and \eqref{eq:cRKDG4-4}, respectively --- to enforce the pointwise bounds of the numerical solution. The method is typically coupled with $\IP^3$- or $\IQ^3$-DG method to achieve overall fourth-order accuracy.

\section{Numerical experiments}\label{sec:numerical_experiment}

In this section, we validate the accuracy of our numerical schemes by convergence rate tests. Representative physical benchmarks for the linear advection equation and compressible Euler equations indicate that the cRKDG schemes preserve bounds/positivity and are stable for demanding gas dynamic simulations.  Cartesian meshes are used for the 2D tests.  
\par
It is important to note that we only apply the bound-/positivity-preserving limiter without any special treatment of anti-oscillation. This is to specifically emphasize that this paper focuses on bound preservation, namely, our goal is to minimize the use of other limiters to showcase the bound-preserving property of the cRKDG methods.  
In addition, for all tests related to the compressible Euler equations, the ideal gas equation of state with $\gamma = 1.4$ is used. And $\epsilon = 10^{-8}$ is used for the numerical admissible set $B^{\epsilon}$.

\subsection{Implementation and time-marching strategy}\label{sec:timestepsize}
We provide details on implementing the simulator with an adaptive time step size strategy. 
Recall that the linear stability CFL of cRKDG methods are: $0.333$ for the second-order scheme; $0.178$ for the third-order scheme; and $0.103$ for the fourth-order scheme \cite{chen2024runge}.
At each time step, a tentative time step size is give by
\begin{align}\label{eq:numerical_experiment:CFL}
\Delta t = \frac{\mathrm{CFL}}{a_0} \Delta x.
\end{align}
If an out-of-bound value is produced in the solution, then we decrease the time step size by a factor of two and recompute. 
\par
We adopt the same adaptive time-stepping used in \cite{wang2012robust, zhang2017positivity}.
As an example, we list our adaptive time-stepping algorithm for the third-order cRKDG scheme of solving the compressible Euler system. A similar time-marching strategy can be constructed for scalar conservation laws.  
\begin{itemize}[leftmargin=0.5cm]
\item[] {\bf Algorithm}. At time $t^n$, use linear stability CFL to compute the trial step size $\Delta{t}$ by \eqref{eq:numerical_experiment:CFL}. The parameter $\epsilon > 0$ is a prescribed small number for numerical admissible state set $B^\epsilon$.
The input DG polynomial $u_h^n$ satisfies $u_h^n(x_q)\in B^\epsilon$, for all $x_q\in \cup_K S_K$. 
\item[] Step~1. Given DG polynomial $u_h^n$, compute the first stage to obtain $u_h^{(2)}$.
\begin{itemize}[topsep=0pt, leftmargin=1.0cm, label=\labelitemi]
\item If the cell averages $\overline{u}_K^{(2)} \in B^\epsilon$, for all $K\in\setE_h$, then apply limiter \eqref{eq:Euler-limiter} to obtain $\widetilde{u}_h^{(2)}$ and go to Step~2.
\item Otherwise, recompute the first stage with halved step size $\Delta{t} \leftarrow \frac{1}{2}\Delta{t}$. 
\end{itemize}
\item[] Step~2. Given DG polynomial $\widetilde{u}_h^{(2)}$, compute the second stage to obtain $u_h^{(3)}$.
\begin{itemize}[topsep=0pt, leftmargin=1.0cm, label=\labelitemi]
\item If the cell averages $\overline{u}_K^{(3)} \in B^\epsilon$, for all $K\in\setE_h$, then apply limiter \eqref{eq:Euler-limiter} to obtain $\widetilde{u}_h^{(3)}$ and go to Step~3.
\item Otherwise, return to Step~1 and restart the computation with halved step size $\Delta{t} \leftarrow \frac{1}{2}\Delta{t}$. 
\end{itemize}
\item[] Step~3. Given DG polynomial $\widetilde{u}_h^{(3)}$, compute the third stage to obtain $u_h^{(4)}$.
\begin{itemize}[topsep=0pt, leftmargin=1.0cm, label=\labelitemi]
\item If the cell averages $\overline{u}_K^{(4)} \in B^\epsilon$, for all $K\in\setE_h$, then apply limiter \eqref{eq:Euler-limiter} to obtain $u_h^{n+1}$. We finish the current cRKDG step.
\item Otherwise, return to Step~1 and restart the computation with halved step size $\Delta{t} \leftarrow \frac{1}{2}\Delta{t}$. 
\end{itemize}
\end{itemize}
A time-marching algorithm, as defined above, can easily lead to an endless loop if the scheme loses positivity. However, the results proved in Section~\ref{sec:bound_preserving} ensure that the loop terminates within a finite number of restarts.

\subsection{Convergence study}
In this part, we utilize manufactured solutions to verify the accuracy of the cRKDG schemes. 
We consider two representative examples: a linear scalar conservation law, specifically the linear advection equation in one dimension, and a nonlinear system of conservation laws, namely the compressible Euler equations in two dimensions. 
\par
For the tested cRKDG algorithms, we combine $\IP^k$- or $\IQ^k$-DG schemes for $k = 1, 2, 3$ with $(k+1)$th-order RK time discretization, as specified in \eqref{eq:cRKDG2}, \eqref{eq:cRKDG3}, and \eqref{eq:cRKDG4}, respectively. Let $\texttt{err}_{\Delta x}$ denote the error on a mesh with resolution $\Delta x$. The convergence rate is computed by $\ln(\texttt{err}_{\Delta x}/\texttt{err}_{\Delta x/2})/\ln(2)$.

\paragraph{Example~4.1 (1D linear advection).}
Let us consider the equation $\partial_t u + \partial_x u = 0$ in the computational domain $\Omega = [0,1]$ with a simulation end time $T = 0.1024$. The prescribed non-polynomial solution is $u = \sin{2\pi (x-t)} + 2$, and $u\in[1,3]$. The initial and boundary conditions are defined by the manufactured solution.
\par
We employ $k$th-order ($k=1, 2, 3$) Lagrange polynomials as basis functions for $\IP^k$ spaces and use $(k+1)$-point Gauss quadrature for numerical integration. 
Select time step sizes small enough such that the CFL condition is always satisfied.
We obtained the optimal convergence rates, e.g., $(k+1)$th-order in $\IP^k$ space, see Table~\ref{tab:convergence_lin_adv}.
The bound-preserving limiter is triggered.
\begin{table}[ht!]
\centering
{\small
\begin{tabularx}{0.95\linewidth}{@{~}c@{~}|c@{~}C@{~}c@{~}|c@{~}C@{~}c@{~}|c@{~}C@{~}c@{~}}
\toprule
~ & ~ & $k = 1$ & ~ & ~ & $k = 2$ & ~ & ~ & $k = 3$ & ~ \\
\toprule
$\Delta t$ & $\Delta x$ & {$\|u_h^{N_T}-u(T)\|_{L^2}$} & rate & $\Delta x$ & {$\|u_h^{N_T}-u(T)\|_{L^2}$} & rate & $\Delta x$ & {$\|u_h^{N_T}-u(T)\|_{L^2}$} & rate \\
\midrule
$2^{4}\cdot10^{-4}$ & $1/2^5$ &  $4.444\cdot10^{-3}$ & ---    & $1/2^4$ & $4.641\cdot10^{-4}$ & ---    & $1/2^3$ & $2.390\cdot10^{-4}$ & ---    \\
$2^{3}\cdot10^{-4}$ & $1/2^6$ & $1.167\cdot10^{-3}$ & 1.929  & $1/2^5$ & $5.822\cdot10^{-5}$ & 2.995  & $1/2^4$ & $1.213\cdot10^{-5}$ & 4.301  \\
$2^{2}\cdot10^{-4}$ & $1/2^7$ & $3.109\cdot10^{-4}$ & 1.908  & $1/2^6$ & $7.272\cdot10^{-6}$ & 3.001  & $1/2^5$ & $6.553\cdot10^{-7}$ & 4.211  \\
$2^{1}\cdot10^{-4}$ & $1/2^8$ & $8.166\cdot10^{-5}$ & 1.929  & $1/2^7$ & $9.090\cdot10^{-7}$ & 3.000  & $1/2^6$ & $4.013\cdot10^{-8}$ & 4.030  \\ 
\bottomrule
\end{tabularx}
}
\caption{Test of accuracy (linear advection). From left to right: the errors and convergence rates for $\IP^1$, $\IP^2$, and $\IP^3$ schemes.}
\label{tab:convergence_lin_adv}
\end{table}

\paragraph{Example~4.2 (2D compressible Euler).}
Let us consider the Euler equations in the computational domain $\Omega = [0,1]^2$ with simulation end time $T = 0.1024$. 
We use the tensor product of $k$th-order ($k=1, 2, 3$) Legendre polynomials as bases for $\IP^k$ spaces and the tensor product of $k$th-order Lagrange polynomials as bases for $\IQ^k$ spaces.  We use the tensor product of $(k+1)$-point Gauss quadrature for numerical integration. 
The discrete $L^2$ distance between the numerical solution and the exact solution is defined by
\begin{align}
\|u_h^{N_T} - u(T)\|_{L^2}^2 = \|\rho_h^{N_T}-\rho(T)\|_{L^2}^2 + \|m_h^{N_T}-m(T)\|_{L^2}^2 + \|E_h^{N_T}-E(T)\|_{L^2}^2.
\end{align}
\par
We consider two prescribed solutions as follows. In the first scenario, the solution is non-polynomial for all unknowns to better validate the accuracy, and it produces a nonzero right-hand side source. In the second scenario, we employ manufactured solutions with a density close to zero.
\begin{align}
\text{(scenario~I)}&\quad
\begin{cases}
\rho\, = \exp{(-t)} \sin{2\pi (x+y)} + 2,\\
w = \begin{pmatrix}
\exp{(-t)} \cos{(2\pi x)} \sin{(2\pi y)} + 2\\
\exp{(-t)} \sin{(2\pi x)} \cos{(2\pi y)} + 2
\end{pmatrix},\\
e\, = \frac{1}{2}\exp{(-t)} \cos{2\pi (x+y)} + 1.
\end{cases}\\
\text{(scenario~II)}&\quad
\begin{cases}
\rho\, = 0.99\sin{\pi(x + y - 2t)} + 1,\\
w = \begin{pmatrix}
1 \\ 1
\end{pmatrix},\\
e\, = \displaystyle \frac{1}{(\gamma - 1)}\frac{1}{0.99\sin{\pi(x + y - 2t)} + 1}.
\end{cases}
\end{align}
The right-hand side source and initial and boundary conditions are defined by above manufactured solutions.
Select time step sizes small enough such that the CFL condition is always satisfied.
We obtained the optimal convergence rates, e.g., $(k+1)$th-order in both $\IP^k$ and $\IQ^k$ spaces; see Table~\ref{tab:convergence_Euler_test1} and Table~\ref{tab:convergence_Euler_test2}.
\begin{table}[ht!]
\centering
\begin{tabularx}{0.9\linewidth}{@{~}c@{~}|c@{~}|c@{~}|C@{~}C@{~}|C@{~}C@{~}}
\toprule
\multicolumn{3}{c|}{~} & \multicolumn{2}{c|}{$\IP^k$ basis} & \multicolumn{2}{c}{$\IQ^k$ basis}  \\
\toprule
$k$ & $\Delta t$ & $\Delta x$ & $\|u_h^{N_T} - u(T)\|_{L^2}$ & rate & $\|u_h^{N_T} - u(T)\|_{L^2}$ & rate \\
\midrule
$1$ & $2^{5}\cdot10^{-5}$ & $1/2^4$ & $2.631\cdot10^{-2}$ & ---   & $2.573\cdot10^{-2}$ & ---   \\
~   & $2^{4}\cdot10^{-5}$ & $1/2^5$ & $6.293\cdot10^{-3}$ & 2.064 & $6.855\cdot10^{-3}$ & 1.908 \\
~   & $2^{3}\cdot10^{-5}$ & $1/2^6$ & $1.561\cdot10^{-3}$ & 2.011 & $1.781\cdot10^{-3}$ & 1.945 \\
~   & $2^{2}\cdot10^{-5}$ & $1/2^7$ & $3.912\cdot10^{-4}$ & 1.997 & $4.546\cdot10^{-4}$ & 1.970 \\
\midrule
$2$ & $2^{5}\cdot10^{-5}$ & $1/2^3$ & $1.103\cdot10^{-2}$ & ---   & $4.642\cdot10^{-3}$ & ---   \\
~   & $2^{4}\cdot10^{-5}$ & $1/2^4$ & $1.337\cdot10^{-3}$ & 3.045 & $5.587\cdot10^{-4}$ & 3.055 \\
~   & $2^{3}\cdot10^{-5}$ & $1/2^5$ & $1.647\cdot10^{-4}$ & 3.021 & $7.109\cdot10^{-5}$ & 2.975 \\
~   & $2^{2}\cdot10^{-5}$ & $1/2^6$ & $2.057\cdot10^{-5}$ & 3.001 & $9.040\cdot10^{-6}$ & 2.975 \\
\midrule
$3$ & $2^{5}\cdot10^{-5}$ & $1/2^2$ & $1.600\cdot10^{-2}$ & ---   & $3.373\cdot10^{-3}$ & ---   \\
~   & $2^{4}\cdot10^{-5}$ & $1/2^3$ & $8.637\cdot10^{-4}$ & 4.145 & $1.803\cdot10^{-4}$ & 4.226 \\
~   & $2^{3}\cdot10^{-5}$ & $1/2^4$ & $5.192\cdot10^{-5}$ & 4.030 & $1.086\cdot10^{-5}$ & 4.054 \\
~   & $2^{2}\cdot10^{-5}$ & $1/2^5$ & $3.232\cdot10^{-6}$ & 4.002 & $6.704\cdot10^{-7}$ & 4.018 \\
\bottomrule
\end{tabularx}
\caption{Test of accuracy (Euler, scenario~I). The errors and convergence rates for $\IP^k$ and $\IQ^k$ ($k=1,2,3$) spaces.}
\label{tab:convergence_Euler_test1}
\end{table}
\begin{table}[ht!]
\centering
\begin{tabularx}{0.9\linewidth}{@{~}c@{~}|c@{~}|c@{~}|C@{~}C@{~}|C@{~}C@{~}}
\toprule
\multicolumn{3}{c|}{~} & \multicolumn{2}{c|}{$\IP^k$ basis} & \multicolumn{2}{c}{$\IQ^k$ basis}  \\
\toprule
$k$ & $\Delta t$ & $\Delta x$ & $\|u_h^{N_T} - u(T)\|_{L^2}$ & rate & $\|u_h^{N_T} - u(T)\|_{L^2}$ & rate \\
\midrule
$1$ & $2^{5}\cdot10^{-5}$ & $1/2^4$ & $7.317\cdot10^{-3}$ & ---   & $1.358\cdot10^{-2}$ & ---   \\
~   & $2^{4}\cdot10^{-5}$ & $1/2^5$ & $1.680\cdot10^{-3}$ & 2.123 & $3.507\cdot10^{-3}$ & 1.953 \\
~   & $2^{3}\cdot10^{-5}$ & $1/2^6$ & $3.806\cdot10^{-4}$ & 2.142 & $8.770\cdot10^{-4}$ & 2.000 \\
~   & $2^{2}\cdot10^{-5}$ & $1/2^7$ & $8.484\cdot10^{-5}$ & 2.166 & $2.198\cdot10^{-4}$ & 1.997 \\
\midrule
$2$ & $2^{5}\cdot10^{-5}$ & $1/2^3$ & $1.805\cdot10^{-3}$ & ---   & $3.271\cdot10^{-3}$ & ---   \\
~   & $2^{4}\cdot10^{-5}$ & $1/2^4$ & $2.204\cdot10^{-4}$ & 3.034 & $5.056\cdot10^{-4}$ & 2.694 \\
~   & $2^{3}\cdot10^{-5}$ & $1/2^5$ & $2.750\cdot10^{-5}$ & 3.003 & $6.848\cdot10^{-5}$ & 2.884 \\
~   & $2^{2}\cdot10^{-5}$ & $1/2^6$ & $3.440\cdot10^{-6}$ & 2.999 & $8.842\cdot10^{-6}$ & 2.953 \\
\midrule
$3$ & $2^{5}\cdot10^{-5}$ & $1/2^2$ & $1.736\cdot10^{-3}$ & ---   & $8.852\cdot10^{-4}$ & ---   \\
~   & $2^{4}\cdot10^{-5}$ & $1/2^3$ & $9.500\cdot10^{-5}$ & 4.192 & $5.721\cdot10^{-5}$ & 3.952 \\
~   & $2^{3}\cdot10^{-5}$ & $1/2^4$ & $5.740\cdot10^{-6}$ & 4.049 & $3.314\cdot10^{-6}$ & 4.110 \\
~   & $2^{2}\cdot10^{-5}$ & $1/2^5$ & $3.997\cdot10^{-7}$ & 3.844 & $1.804\cdot10^{-7}$ & 4.200 \\
\bottomrule
\end{tabularx}
\caption{Test of accuracy (Euler, scenario~II). The errors and convergence rates for $\IP^k$ and $\IQ^k$ ($k=1,2,3$) spaces.}
\label{tab:convergence_Euler_test2}
\end{table}

\subsection{Physical benchmark tests in one dimension}
In this part, we validate the cRKDG schemes using three representative 1D benchmarks, including the traveling of Heaviside function governed by the linear advection equation for scalar conservation law, as well as the Lax shock tube and double rarefraction governed by compressible Euler equations for nonlinear system.
\par 
For all tests, we utilize $k$th-order ($k=2, 3$) Lagrange polynomials as basis functions for $\IP^k$ spaces and use $(k+1)$-point Gauss quadrature for numerical integration. 
The $(k+1)$th-order RK method is combined with the $\IP^k$ space discretization. 

\paragraph{Example~4.3 (Traveling Heaviside function).}
Let us consider the linear advection equation $\partial_t u + \partial_x u = 0$. We choose the computational domain $\Omega = [0,1]$ and set the simulation end time as $T = 1$. 
The initial condition is specified as $u^0(x) = 1$, and we use Dirichlet boundary condition $u = 2$ on left end of the domain. Hence $u\in [1,2]$.
We uniformly partition the domain $\Omega$ into $200$ cells.
Our schemes preserve bounds and conservation. The discontinuity is correctly captured. Figure~\ref{fig:lin_adv_1D} displays snapshots of the numerical solution $u_h$ produced by $\IP^2$ and $\IP^3$ schemes at $T = 0.5$. 
Compared to the case where no bound-preserving limiter is applied (see the top of Figure~\ref{fig:lin_adv_1D}), the solution stays in the bounds after applying the limiter (see the bottom of Figure~\ref{fig:lin_adv_1D}). 
The exact solution is plotted in red as a reference.
\begin{figure}[ht!]
\begin{center}
\begin{tabularx}{0.95\linewidth}{@{}c@{\quad}c@{\quad}c@{}}
\begin{sideways}{$\hspace{0.95cm} \text{without limiter} \quad$}\end{sideways} &
\includegraphics[width=0.45\textwidth]{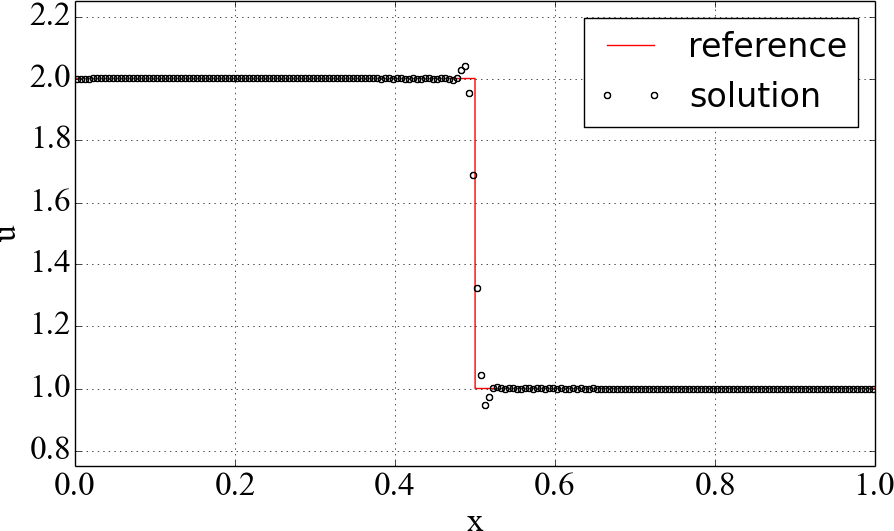} &
\includegraphics[width=0.45\textwidth]{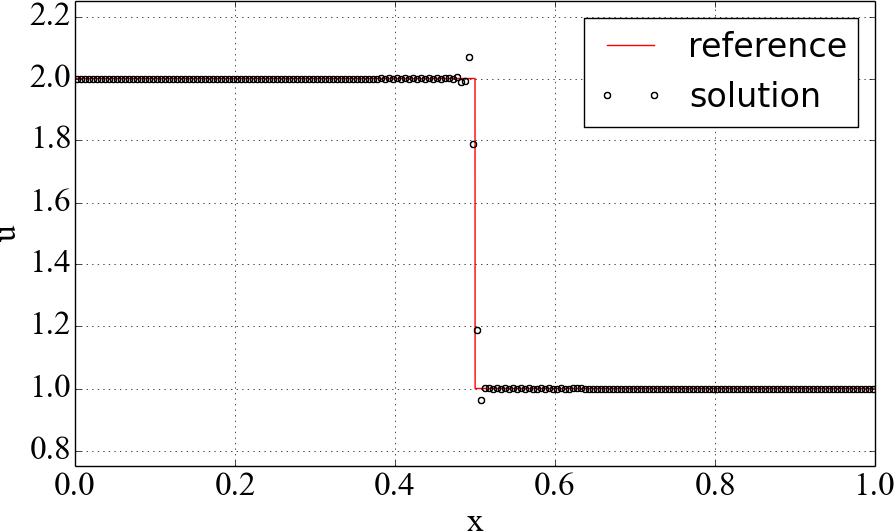} \\
\begin{sideways}{$\hspace{1.5cm} \text{with limiter} \quad$}\end{sideways} &
\includegraphics[width=0.45\textwidth]{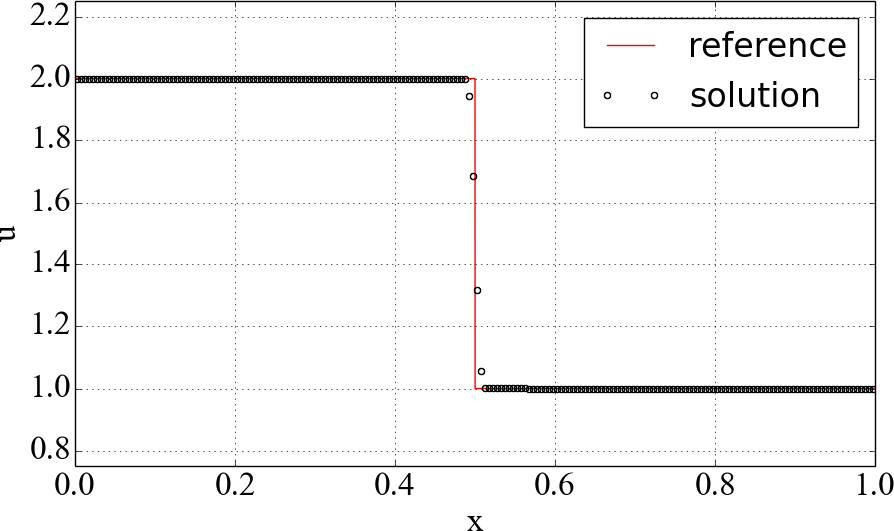} &
\includegraphics[width=0.45\textwidth]{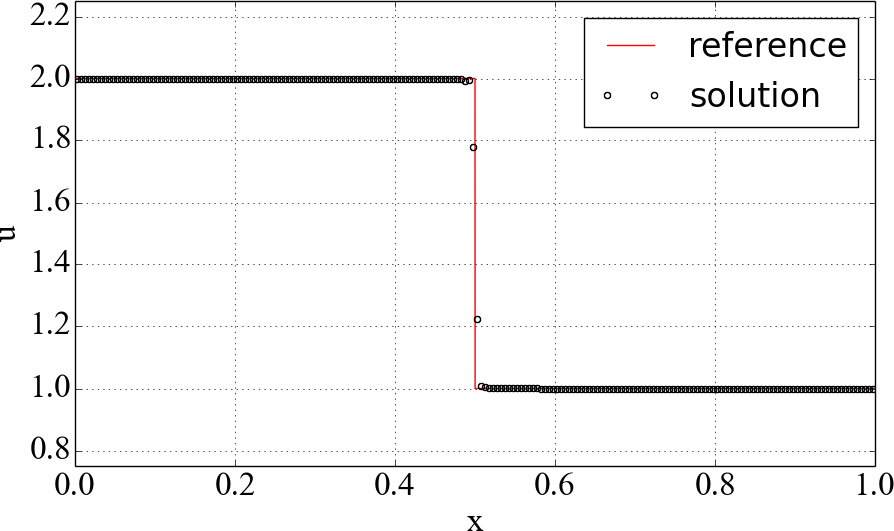} \\
~ &
~~$\IP^2$ scheme & 
~~$\IP^3$ scheme \\
\end{tabularx}
\caption{1D traveling Heaviside function. Snapshots are taken at $T = 0.5$ on $200$ uniform cells. Top: simulations without applying bound-preserving limiter, which produce solutions with out-of-bound cell averages. Bottom: simulations with bound-preserving limiter and solutions stay within the desired bounds. Only cell averages are plotted.}
\label{fig:lin_adv_1D}
\end{center}
\end{figure}

\paragraph{Example~4.4 (Lax shock tube).}
The Lax shock tube problem serves as a classical benchmark for testing numerical schemes of compressible Euler equations.
We choose the computational domain $\Omega = [-5,5]$ and set the simulation end time as $T = 1.3$. 
The initial condition is given by 
\begin{align}
\transpose{(\rho^0, u^0, p^0)} = \begin{cases}
\transpose{(0.445, 0.698, 3.528)}, &\text{if}~x < 0, \\
\transpose{(0.5, 0, 0.571)}, &\text{if}~x \geq 0.
\end{cases} 
\end{align}
We supplement Dirichlet boundary conditions $\transpose{(\rho, m, E)} = \transpose{(0.445, 0.698, 3.528)}$ on the left end of $\Omega$ and $\transpose{(\rho, m, E)} = \transpose{(0.5, 0, 0.571)}$ on the right end of $\Omega$.
\par
We uniformly partition the domain $\Omega$ into $200$ cells.
Our schemes preserve positivity and conservation. The shock location is correctly captured. Figure~\ref{fig:Euler_lax_1D} shows snapshots of the density profiles produced by $\IP^2$ and $\IP^3$ schemes at time $T=1.3$. 
\begin{figure}[ht!]
\begin{center}
\begin{tabularx}{0.95\linewidth}{@{}c@{\quad}c@{}}
\includegraphics[width=0.45\textwidth]{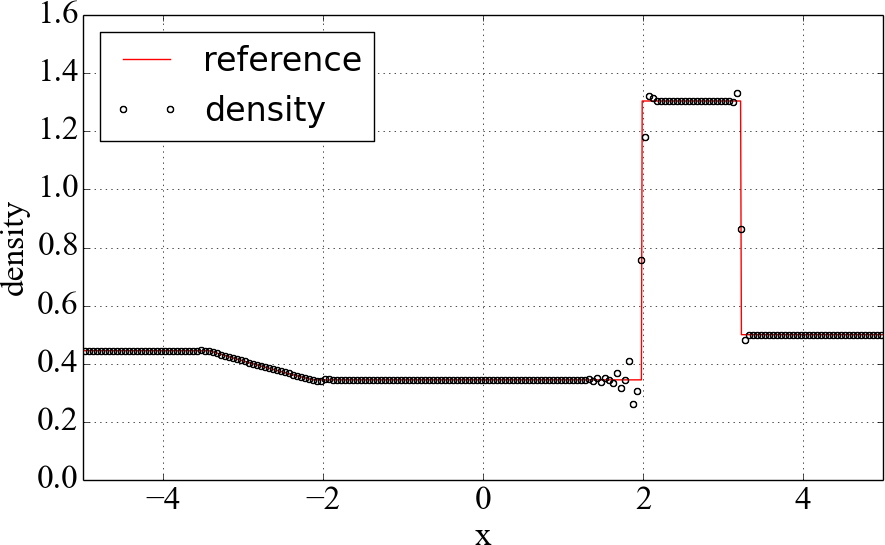} &
\includegraphics[width=0.45\textwidth]{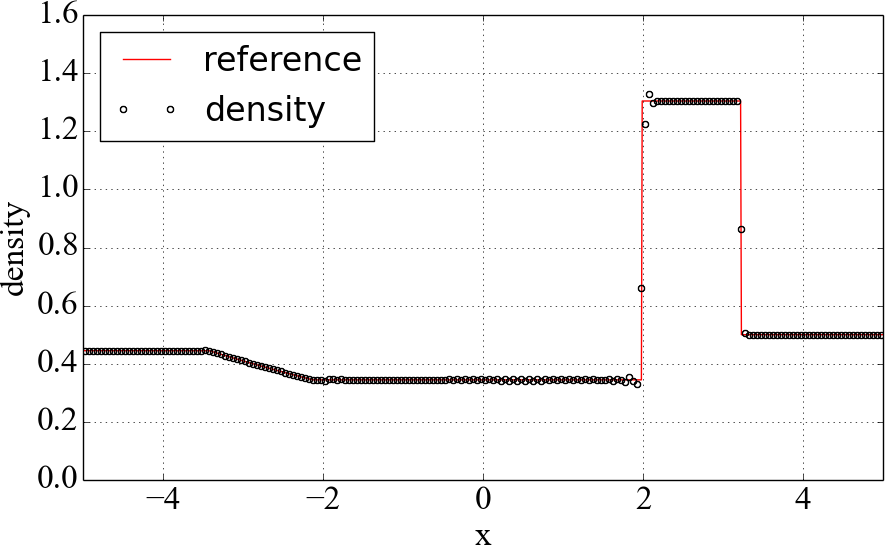} \\
$\qquad~ \IP^2$ scheme & 
$\qquad~ \IP^3$ scheme \\
\end{tabularx}
\caption{Lax shock tube. Simulations with only applying positivity-preserving limiter on $200$ uniform cells. Snapshots are taken at $T = 1.3$. Only cell averages are plotted.}
\label{fig:Euler_lax_1D}
\end{center}
\end{figure}

\paragraph{Example~4.5 (Double rarefaction).}
This Riemann problem serves as a widely used benchmark for testing positivity-preserving schemes of solving compressible Euler equations.
We choose the computational domain $\Omega = [-1,1]$ and set the simulation end time as $T = 0.6$. 
The initial condition is prescribed as follows
\begin{align}
\transpose{(\rho^0, u^0, p^0)} = \begin{cases}
\transpose{(7, -1, 0.2)}, &\text{if}~x < 0, \\
\transpose{(7, 1, 0.2)}, &\text{if}~x \geq 0.
\end{cases} 
\end{align}
We supplement Dirichlet boundary conditions $\transpose{(\rho, m, E)} = \transpose{(7, -1, 0.2)}$ on the left end of $\Omega$ and $\transpose{(\rho, m, E)} = \transpose{(7, 1, 0.2)}$ on the right end of $\Omega$.
\par
We uniformly partition the domain $\Omega$ into $200$ cells.
Our schemes preserve positivity and conservation. Figure~\ref{fig:Euler_doub_rare} shows snapshots of the density and pressure profiles produced by $\IP^2$ and $\IP^3$ schemes at time $T = 0.6$.
\begin{figure}[ht!]
\begin{center}
\begin{tabularx}{0.85\linewidth}{@{}c@{\quad}c@{\hspace{0.75cm}}c@{}}
\begin{sideways}{$\hspace{2.75cm} \IP^2 \text{scheme} \quad$}\end{sideways} &
\includegraphics[width=0.33\textwidth]{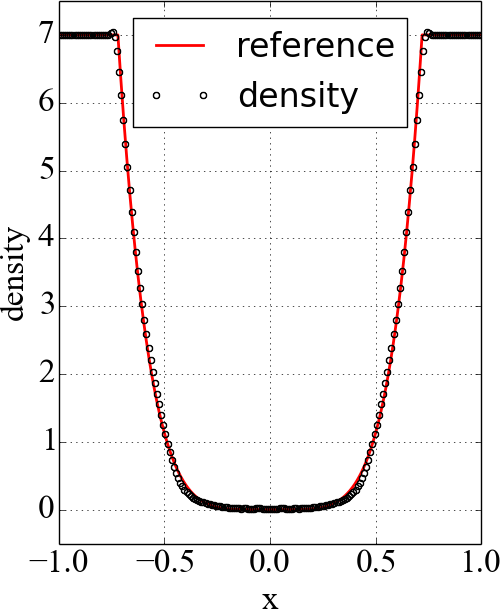} &
\includegraphics[width=0.355\textwidth]{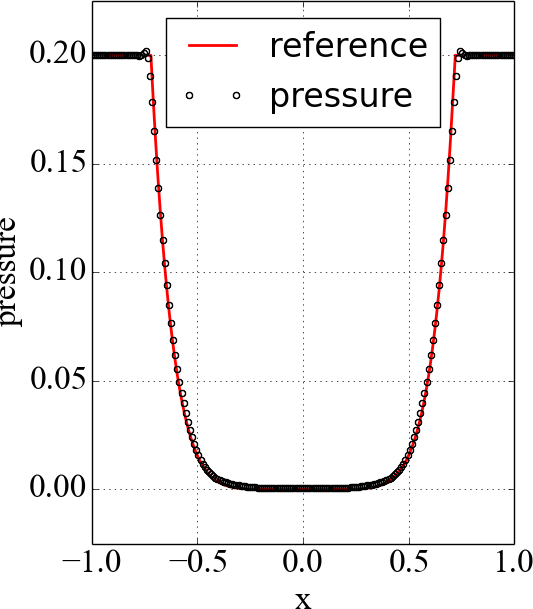} \\
\begin{sideways}{$\hspace{2.75cm} \IP^3 \text{scheme} \quad$}\end{sideways} &
\includegraphics[width=0.33\textwidth]{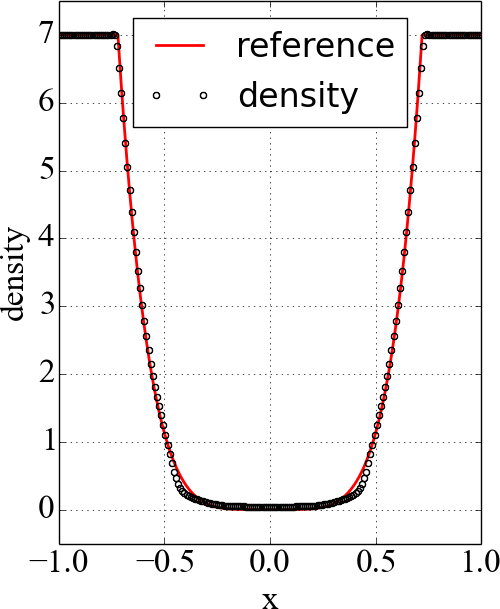} &
\includegraphics[width=0.355\textwidth]{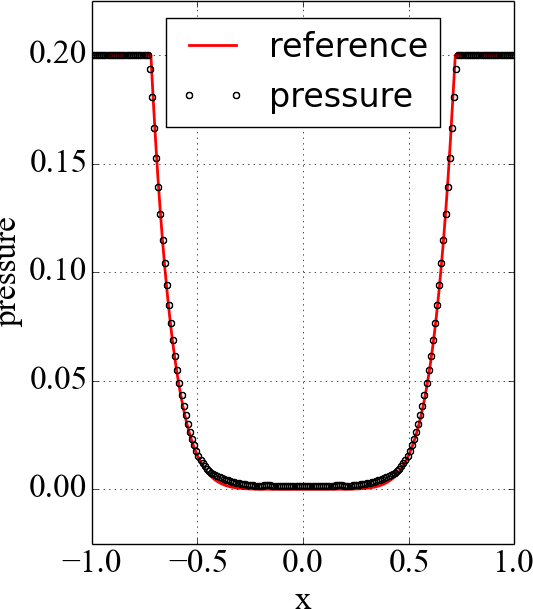} \\
\end{tabularx}
\caption{Double rarefaction. Simulations with only applying positivity-preserving limiter on $200$ uniform cells. Snapshots are taken at $T = 0.6$. Only cell averages are plotted.}
\label{fig:Euler_doub_rare}
\end{center}
\end{figure}

\subsection{Physical benchmark tests in two dimensions}
In this part, we validate the cRKDG schemes using several representative 2D benchmarks, including the traveling of Heaviside function governed by the linear advection equation for scalar conservation laws, as well as the Sedov blast wave, shock diffraction, and shock reflection diffraction governed by compressible Euler equations for nonlinear systems.
\par 
For all tests, we construct $\IP^k$ ($k = 1, 2, 3$) space basis functions using the tensor product of $k$th-order Legendre polynomials, and $\IQ^k$ space basis functions using the tensor product of $k$th-order Lagrange polynomials.
The tensor product of $(k+1)$-point Gauss quadrature is employed for numerical integration in $\IP^k$ and $\IQ^k$ schemes. 
The $(k+1)$th-order RK method is combined with the $\IP^k$ and $\IQ^k$ space discretization. 

\paragraph{Example~4.6 (Traveling Heaviside function).}
Let us consider the linear advection equation $\partial_t \rho + \div{(\rho w)} = 0$ in a squared computational domain $\Omega = [0,1]^2$. Set the simulation end time as $T = 1$. We choose velocity $w = \transpose{(0.5, 0.5)}$ in our linear advection model and solve for the unknown variable $\rho$.
The initial condition is defined as follows
\begin{align}
\rho^0(x,y) = \begin{cases}
1, &\text{if}~x+y > 0.5, \\
2, &\text{if}~x+y \leq 0.5.
\end{cases} 
\end{align}
We use Dirichlet boundary condition $\rho = 2$ on $\{|x|\leq 0.5+t, y = 0\}\cup\{|y|\leq 0.5+t, x = 0\}$.
We uniformly partition the computational domain with structured mesh of resolution $\Delta x = 1/100$.
Our schemes preserve bounds and conservation. Figure~\ref{fig:lin_adv_2D} shows snapshots of the numerical solution $\rho_h$ produced by $\IP^k$ and $\IQ^k$ ($k = 1, 2, 3$) schemes at time $T=1$.
\begin{figure}[ht!]
\begin{center}
\begin{tabularx}{\linewidth}{@{}c@{~}c@{~}c@{~}c@{}}
\includegraphics[width=0.31\textwidth]{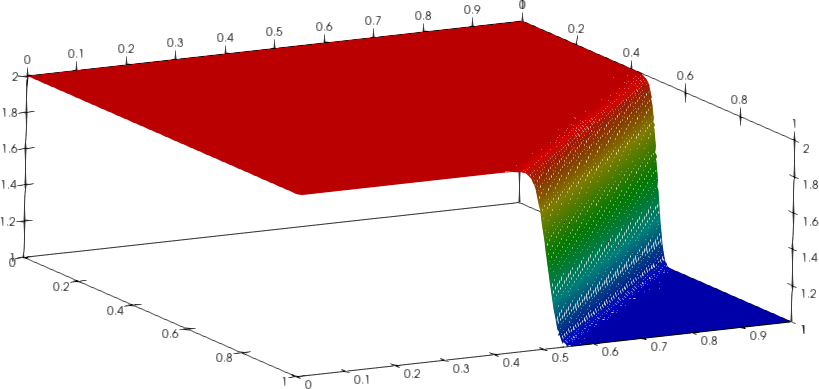} &
\includegraphics[width=0.31\textwidth]{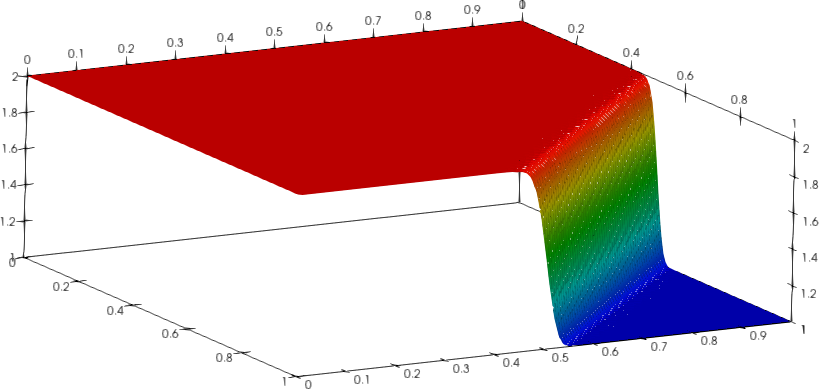} &
\includegraphics[width=0.31\textwidth]{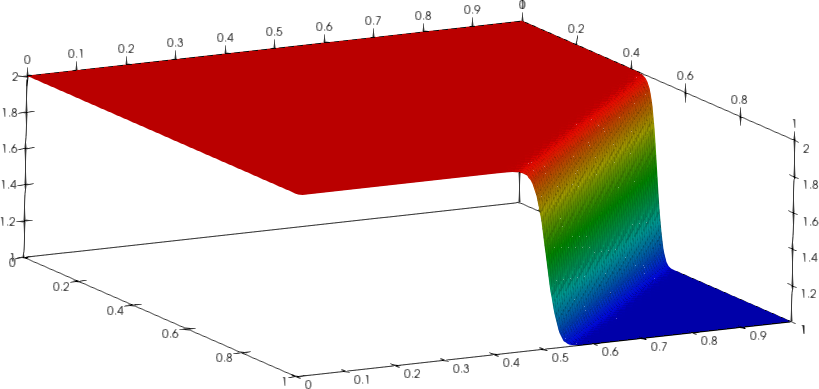} &
\includegraphics[width=0.0475\textwidth]{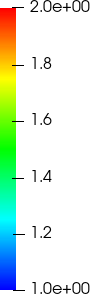} \\
$\IP^1$ scheme & 
$\IP^2$ scheme & 
$\IP^3$ scheme & ~ \\
\includegraphics[width=0.31\textwidth]{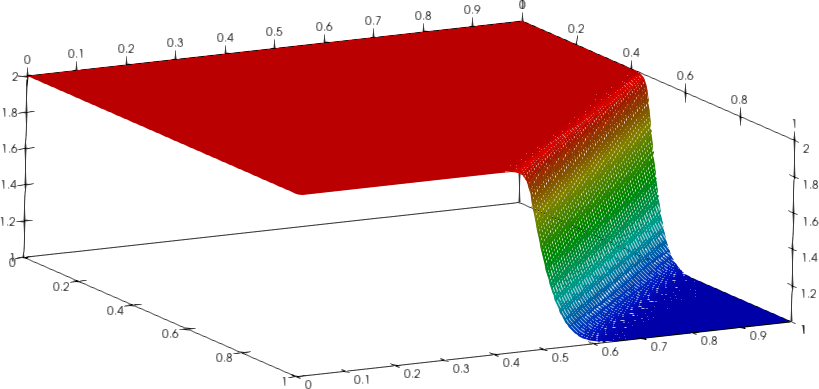} &
\includegraphics[width=0.31\textwidth]{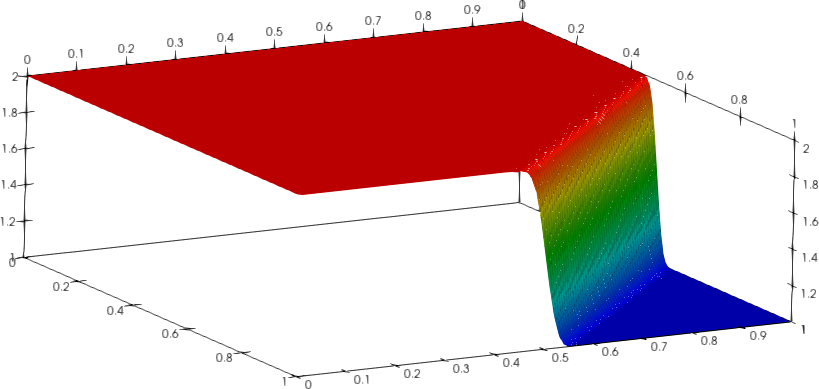} &
\includegraphics[width=0.31\textwidth]{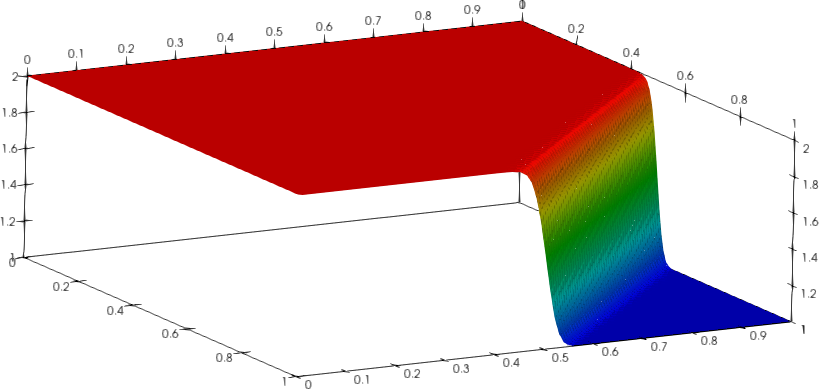} &
\includegraphics[width=0.0475\textwidth]{color_bar_adv.png} \\
$\IQ^1$ scheme & 
$\IQ^2$ scheme &
$\IQ^3$ scheme & ~ \\
\end{tabularx}
\caption{2D traveling Heaviside function. Simulations with only applying bound-preserving limiter. Snapshots are taken at $T = 1$.}
\label{fig:lin_adv_2D}
\end{center}
\end{figure}

\paragraph{Example~4.7 (Sedov blast wave).}
Let the computational domain $\Omega = [0, 1.1]^2$ and the simulation end time $T = 1$. 
We uniformly partition $\Omega$ into square cells. For $\IP^1$ and $\IQ^1$ schemes, the mesh resolution is $\Delta x = 1.1/320$. For $\IP^2$, $\IQ^2$, $\IP^3$, and $\IQ^3$ schemes, the mesh resolution is $\Delta x = 1.1/160$.
The initials are defined as piecewise constants: the density $\rho^0 = 1$, the velocity $w^0 = \transpose{(0,0)}$, and the total energy $E^0$ is set to $10^{-12}$ everywhere, except in the cell located at the lower left corner, where $0.244816/\Delta{x}^2$ is used.
Reflective boundary conditions are applied at the left and bottom boundaries, and outflow boundary conditions are applied at the right and top boundaries.
Our schemes preserve positivity and conservation. The shock location is correctly captured. Figure~\ref{fig:sedov_2D} shows snapshots of the density distribution at time $T=1$.
\begin{figure}[ht!]
\begin{center}
\begin{tabularx}{\linewidth}{@{}c@{~}c@{~}c@{~~}c@{}}
\includegraphics[width=0.3\textwidth]{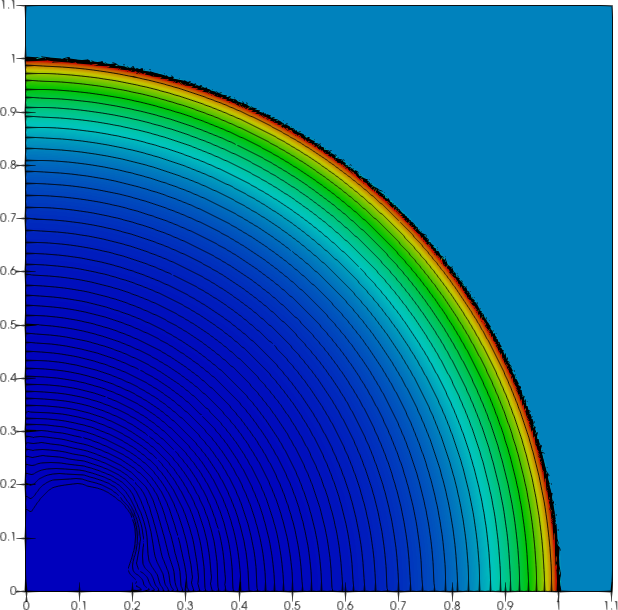} &
\includegraphics[width=0.3\textwidth]{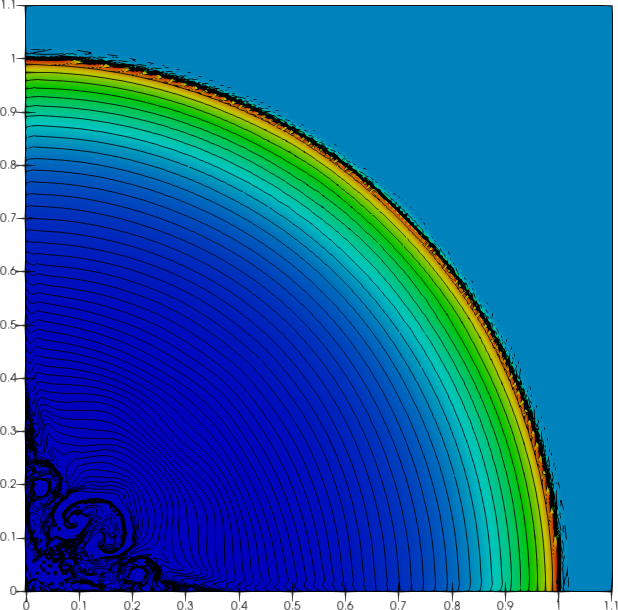} &
\includegraphics[width=0.3\textwidth]{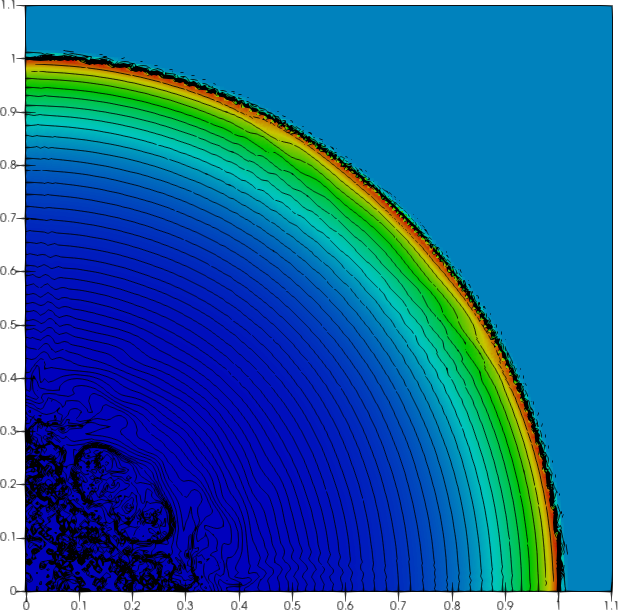} &
\includegraphics[width=0.1025\textwidth]{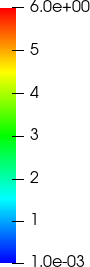}\\
$\IP^1$ scheme & 
$\IP^2$ scheme & 
$\IP^3$ scheme & ~ \\
\includegraphics[width=0.3\textwidth]{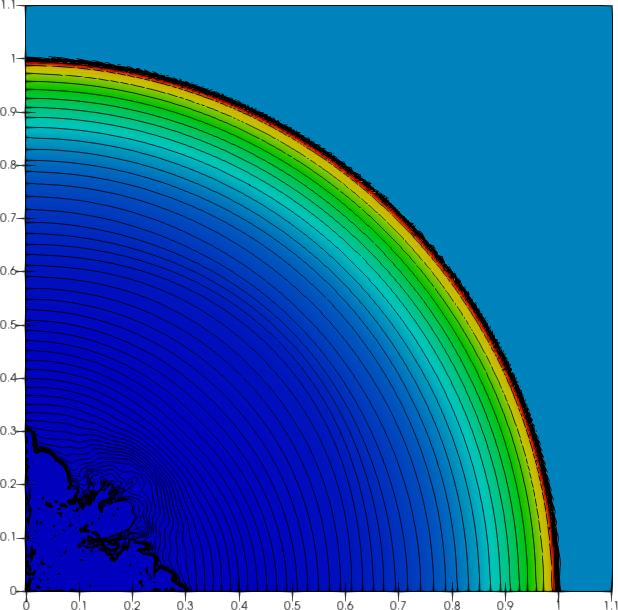} &
\includegraphics[width=0.3\textwidth]{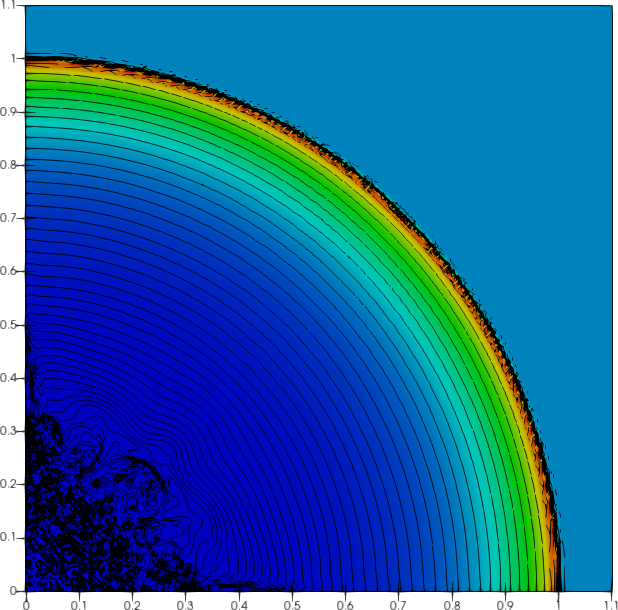} &
\includegraphics[width=0.3\textwidth]{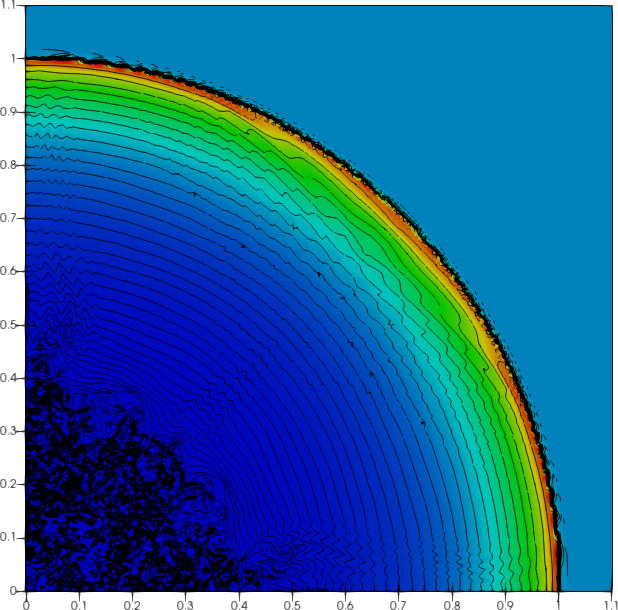} &
\includegraphics[width=0.1025\textwidth]{color_bar_sedov.png}\\
$\IQ^1$ scheme & 
$\IQ^2$ scheme & 
$\IQ^3$ scheme & ~ \\
\end{tabularx}
\caption{Sedov blast wave. Snapshots of density profile are taken at $T=1$. Plot of density: $50$ exponentially distributed contour lines of density from $0.001$ to $6$. Only the positivity-preserving limiter is applied, without any special treatment for anti-oscillation.}
\label{fig:sedov_2D}
\end{center}
\end{figure}

\paragraph{Example~4.8 (Shock diffraction).}
Let the computational domain $\Omega$ be the union of $[0, 1]\times [6, 12]$ and $[1, 13]\times [0, 12]$. We choose the simulation end time $T = 2.3$. A pure right-moving shock of Mach number $5.09$ is initially located at $x = 0.5$ and moves into undisturbed air, which has a density of $1.4$ and a pressure of $1$.
The left boundary is inflow. The top, right, and bottom boundaries are outflow. The fluid-solid boundaries $\{y = 6, 0\leq x\leq 1\}$ and $\{x = 1, 0\leq y\leq 6\}$ are reflective.
\par
We uniformly partition $\Omega$ into square cells. For $\IP^1$ and $\IQ^1$ schemes, the mesh resolution is $\Delta x = 1/96$. For $\IP^2$, $\IQ^2$, $\IP^3$, and $\IQ^3$ schemes, the mesh resolution is $\Delta x = 1/64$.
The diffraction of high-speed shocks at a sharp corner results in low density and low pressure. Our schemes preserve positivity and conservation. The shock location is correctly captured.
Figure~\ref{fig:shock_diffraction} shows snapshots of the density distribution at time $t=2.3$.
We only apply the positivity-preserving limiter without anti-oscillation. No special treatment is taken on the sharp corner.
\begin{figure}[ht!]
\begin{center}
\begin{tabularx}{\linewidth}{@{}c@{~}c@{~}c@{~}c@{}}
\includegraphics[width=0.31\textwidth]{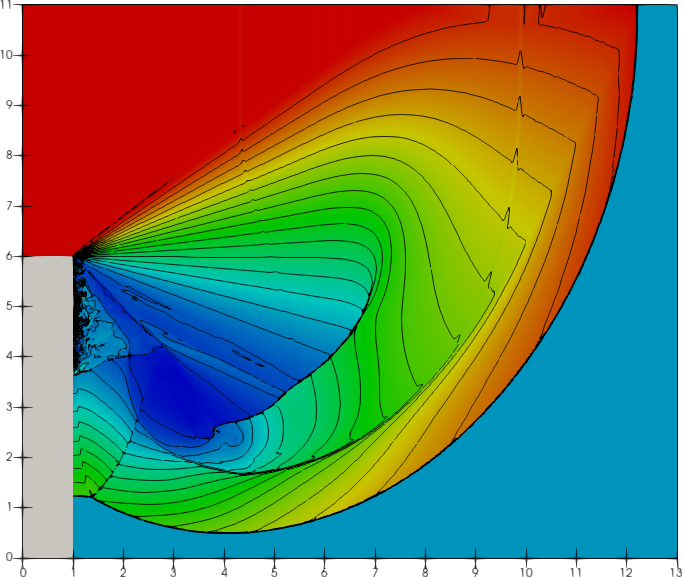} &
\includegraphics[width=0.31\textwidth]{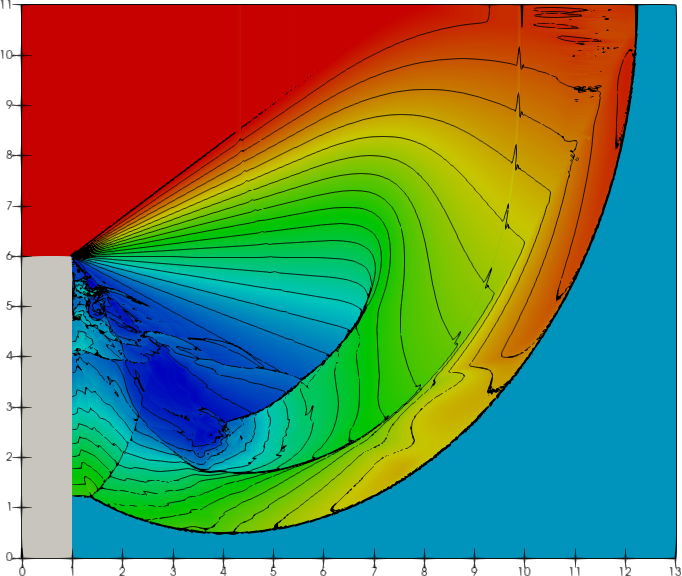} &
\includegraphics[width=0.31\textwidth]{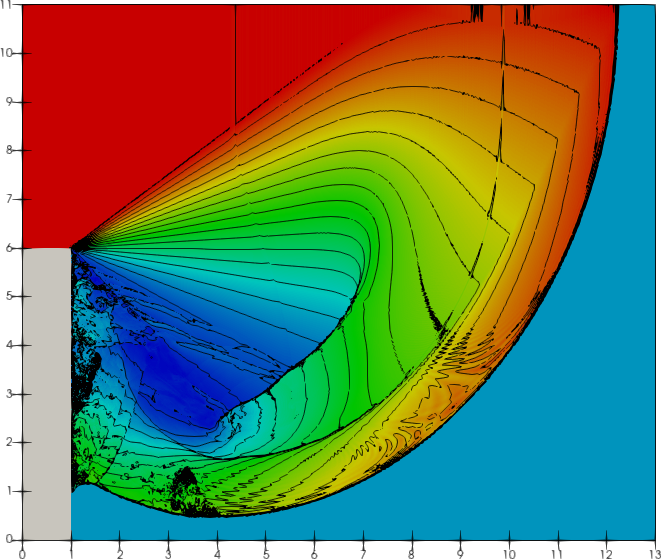} &
\includegraphics[width=0.0825\textwidth]{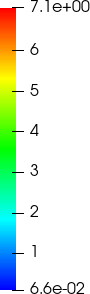} \\
$\IP^1$ scheme & 
$\IP^2$ scheme & 
$\IP^3$ scheme & ~ \\
\includegraphics[width=0.31\textwidth]{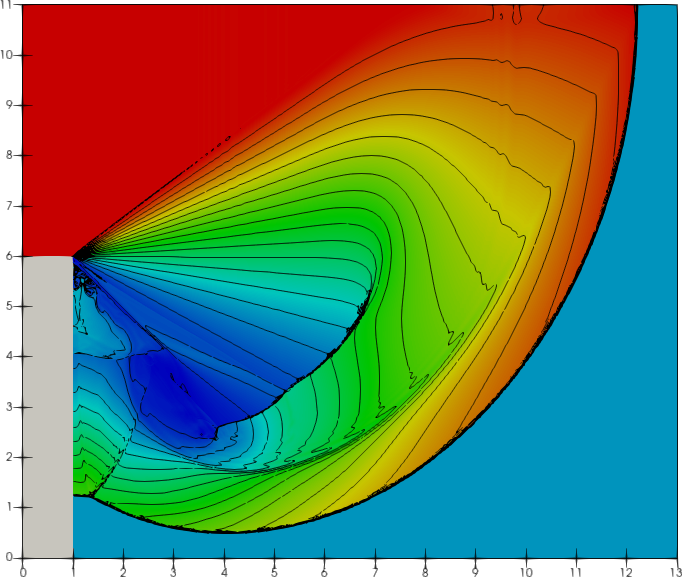} &
\includegraphics[width=0.31\textwidth]{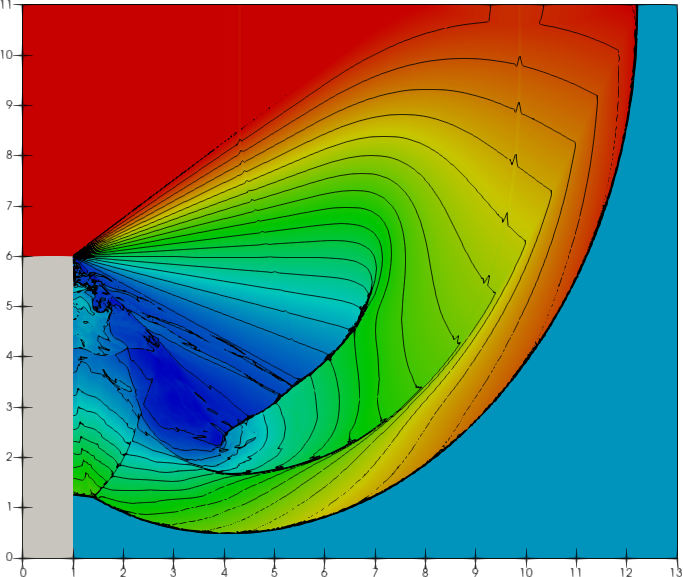} &
\includegraphics[width=0.31\textwidth]{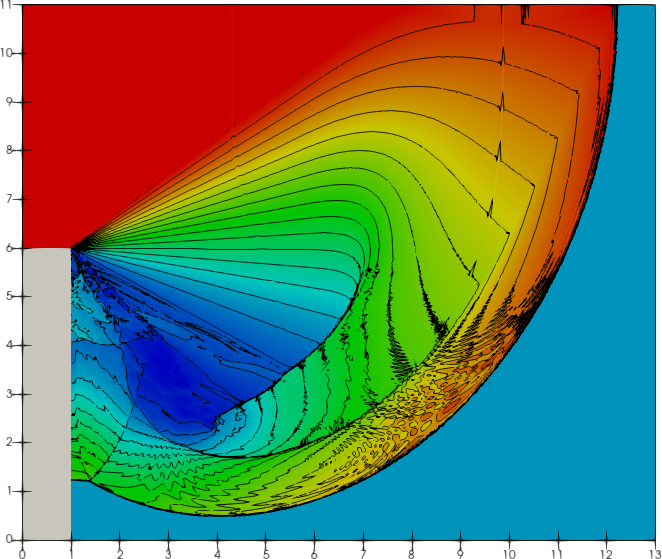} &
\includegraphics[width=0.0825\textwidth]{color_bar_sd.png} \\
$\IQ^1$ scheme & 
$\IQ^2$ scheme &
$\IQ^3$ scheme & ~ \\
\end{tabularx}
\caption{Shock diffraction. The snapshots of density profile are taken at $T = 2.3$. The gray colored region on lower-left denotes solid. Plot of density: $20$ equally spaced contour lines from $0.066227$ to $7.0668$.}
\label{fig:shock_diffraction}
\end{center}
\end{figure}

\paragraph{Example~4.9 (Shock reflection diffraction).}
The double Mach reflection of a Mach $10$ shock is a widely used benchmark for testing positivity-preserving solvers \cite{liu2023positivity,fan2022positivity}.
We consider a Mach $10$ shock moving to the right with a sixty-degree incident angle to the solid surface. Ahead of the shock is undisturbed air, characterized by a density of $1.4$ and a pressure of $1$.
Low density and low pressure appear when the shock crosses the sharp corner. In the region of shock reflection, vortices form due to Kelvin--Helmholtz instabilities.
\par
Let the computational domain $\Omega$ be the union of $[1, 4]\times [-1, 0]$ and $[0, 4]\times [0, 1]$. We choose the simulation end time $T = 0.2$. 
At initial, a Mach $10$ shock is positioned at $({1}/{6}, 0)$ and makes a $60^\circ$ angle with $x$-axis. In the post-shock region, the density is $8$, the velocity is $\transpose{(4.125\sqrt{3}, -4.125)}$, and the pressure is $116.5$.
The left boundary is inflow. The right and bottom boundaries are outflow. Part of the fluid-solid boundaries $\{y = 0, {1}/{6}\leq x\leq 1\}$ and $\{x = 1, -1\leq y\leq 1\}$ are reflective. The post-shock condition is imposed at $\{y = 0, 0\leq x < {1}/{6}\}$, where the density, velocity, and pressure are fixed in time with the initial values to make the reflected shock stick to the solid wall. On the top boundary, the flow values are set to describe the exact motion of the Mach $10$ shock.
\par
We uniformly partition $\Omega$ into square cells. For $\IP^1$ and $\IQ^1$ schemes, the mesh resolution is $\Delta x = 1/600$; for $\IP^2$ and $\IQ^2$ schemes, the mesh resolution is $\Delta x = 1/480$; and for $\IP^3$ and $\IQ^3$ schemes, the mesh resolution is $\Delta x = 1/360$.
Our schemes preserve positivity and conservation with correct shock location and well-captured rollups, see Figure~\ref{fig:shock_reflection_diffraction}.
Again, we emphasize that only the positivity-preserving limiter is applied without any anti-oscillation. No special treatment is taken on the sharp corner.
\begin{figure}[ht!]
\begin{center}
\begin{tabularx}{\linewidth}{@{}c@{}c@{~\quad~}c@{}c@{}}
\includegraphics[width=0.31\textwidth]{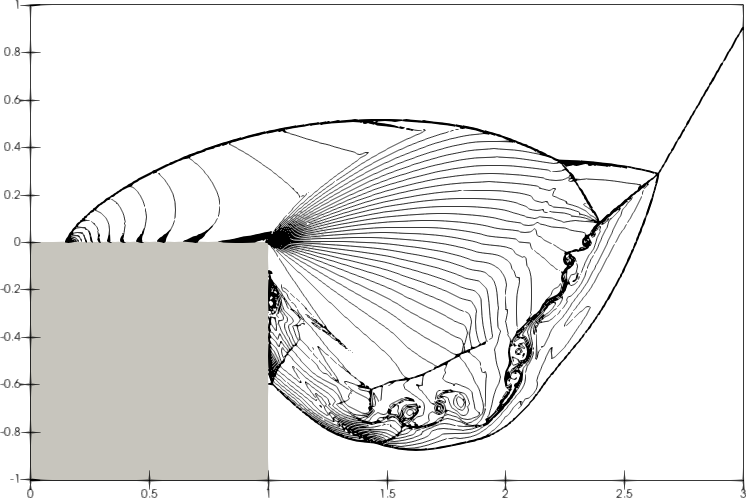} &
\includegraphics[width=0.1625\textwidth]{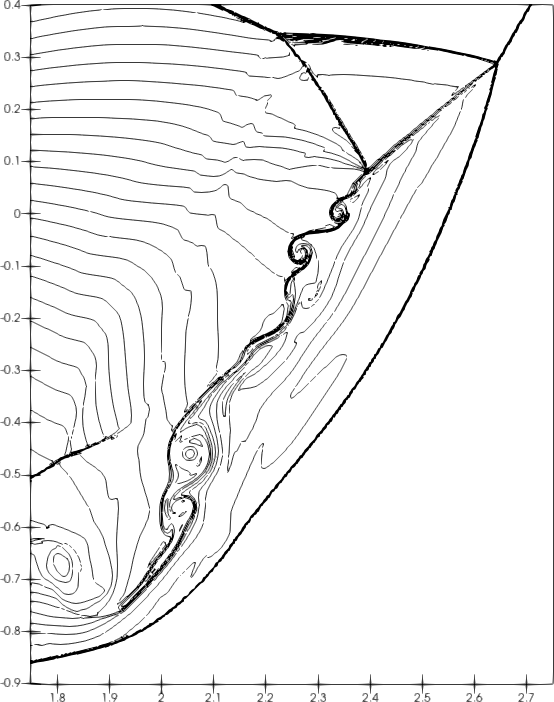} &
\includegraphics[width=0.31\textwidth]{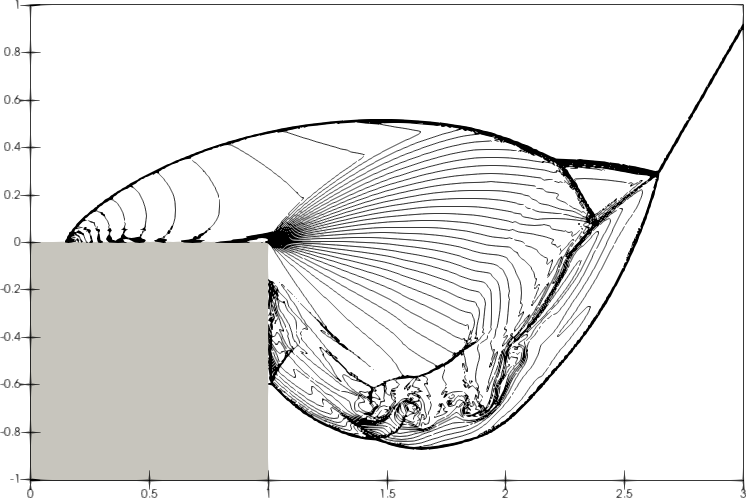} &
\includegraphics[width=0.1625\textwidth]{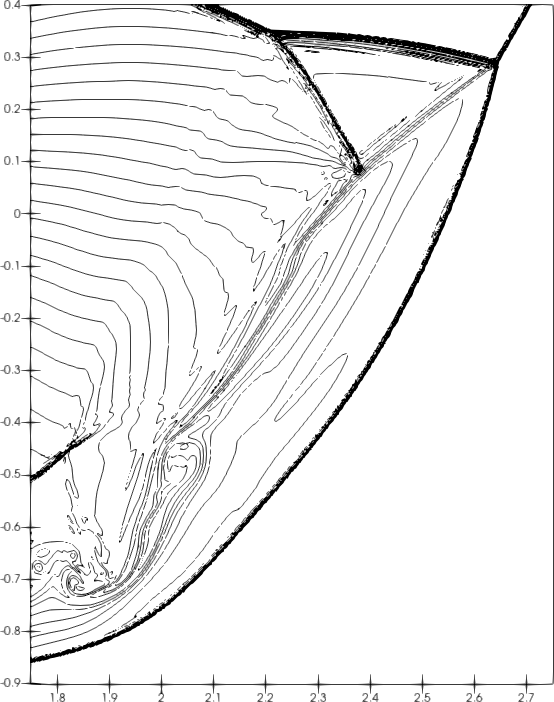} \\
\hspace{2.8cm} $\IP^1$ scheme & ~ &
\hspace{2.8cm} $\IQ^1$ scheme & ~ \\
\includegraphics[width=0.31\textwidth]{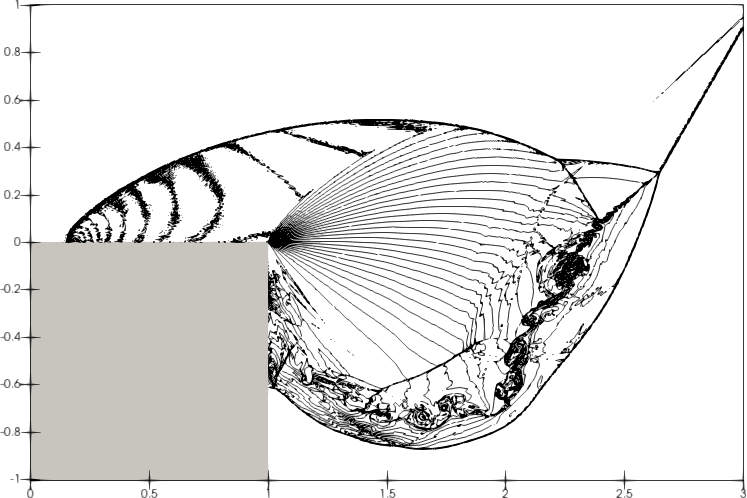} &
\includegraphics[width=0.1625\textwidth]{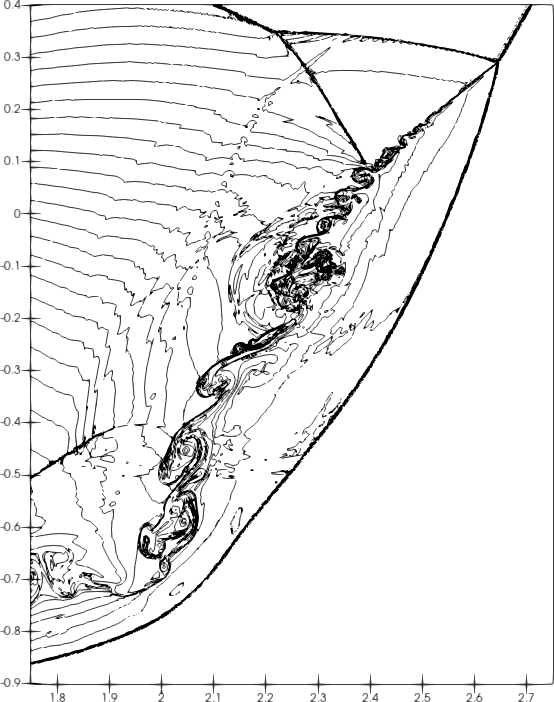} &
\includegraphics[width=0.31\textwidth]{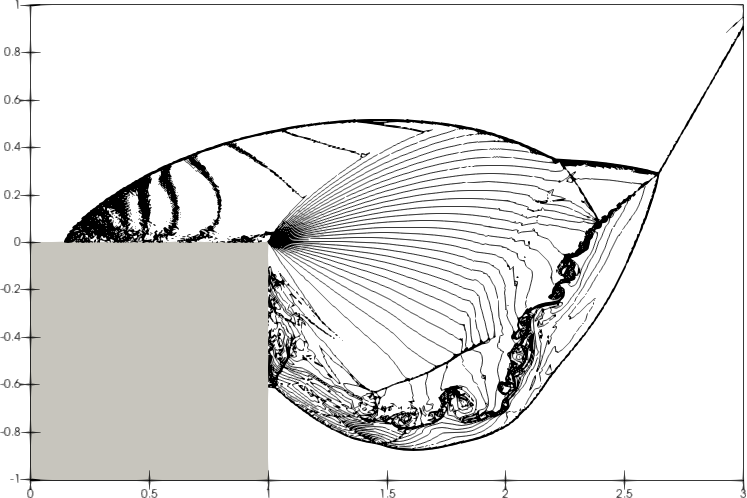} &
\includegraphics[width=0.1625\textwidth]{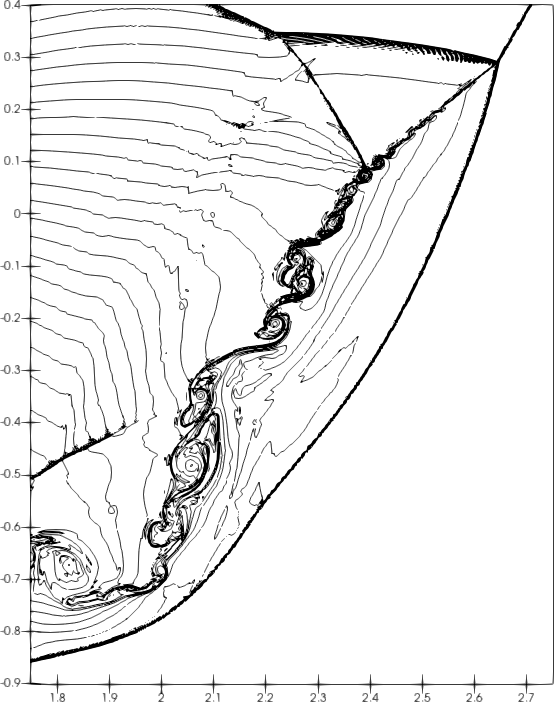} \\
\hspace{2.8cm} $\IP^2$ scheme & ~ &
\hspace{2.8cm} $\IQ^2$ scheme & ~ \\
\includegraphics[width=0.31\textwidth]{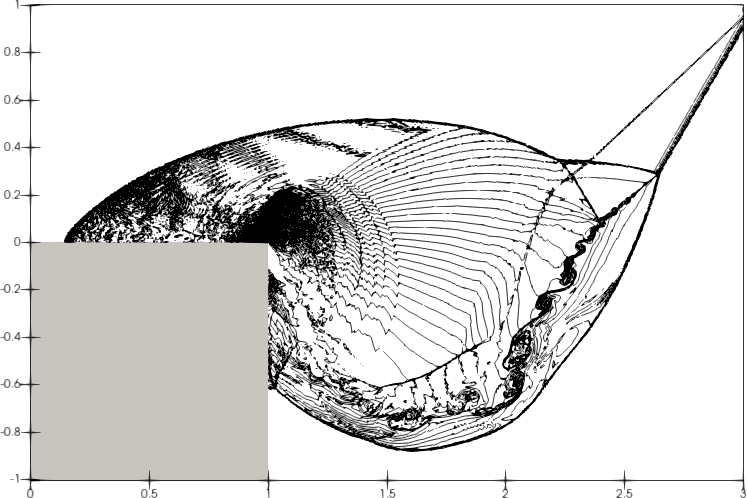} &
\includegraphics[width=0.1625\textwidth]{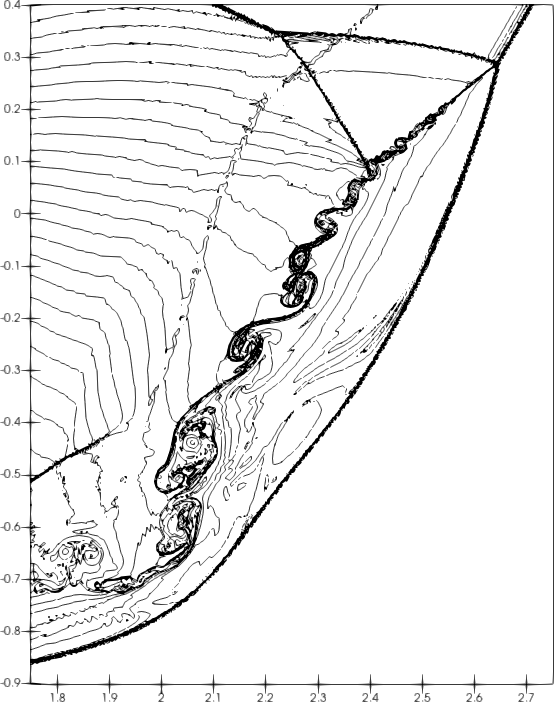} &
\includegraphics[width=0.31\textwidth]{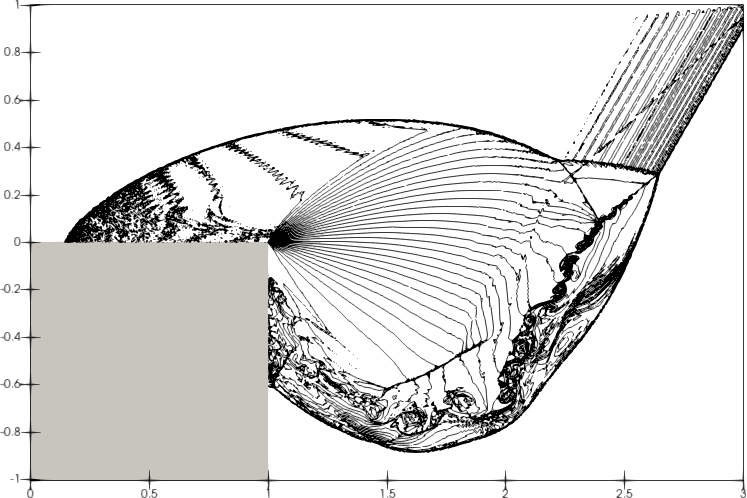} &
\includegraphics[width=0.1625\textwidth]{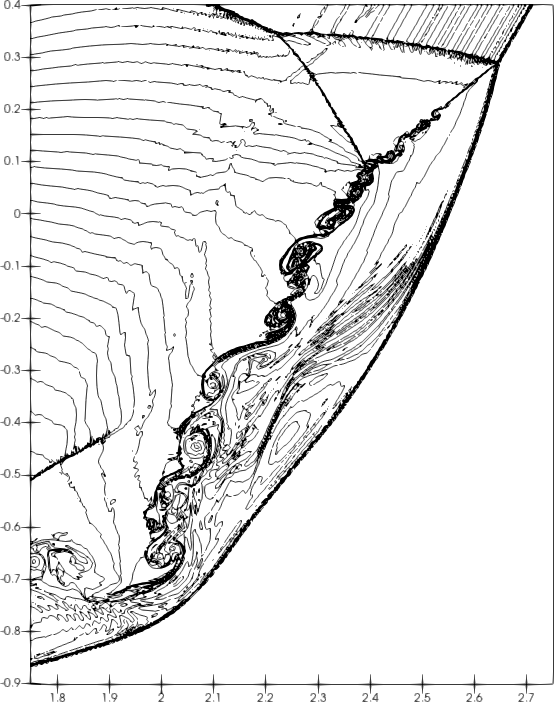} \\
\hspace{2.8cm} $\IP^3$ scheme & ~ &
\hspace{2.8cm} $\IQ^3$ scheme & ~ \\
\end{tabularx}
\caption{Mach $10$ shock reflection and diffraction. The snapshots of density profile are taken at $T = 0.2$. Only contour lines are plotted. The gray colored region on lower-left denotes solid. Plot of density: $50$ equally space contour lines from $0$ to $25$. Zoom in view of the region $[1.75, 2.75] \times [-0.9,0.4]$.}
\label{fig:shock_reflection_diffraction}
\end{center}
\end{figure}

\section{Conclusions}\label{sec:conclusion}
In this paper, we construct a bound-preserving cRKDG method for scalar conservation laws and compressible Euler equations. The main idea is to prove the weak bound-preserving property of each RK stage by representing it as convex combinations of three different forward-Euler solvers. Then a scaling limiter is applied after each stage to enforce the pointwise bounds. 
The method enforces the physical bounds for the cRKDG method, while being able to maintain its compactness, local conservation, and high-order accuracy. Our method circumvents the order barriers of SSP-RK time discretization, enabling us to construct a bound-preserving four-stage fourth-order RKDG method with compact stencils.
Numerical tests, including challenging benchmarks for 2D compressible Euler equations, are presented to demonstrate the performance of the proposed method. 

\revthr{\section*{Acknowledgment} We would like to thank the two reviewers for their valuable comments, which have helped improve the paper.}
\revone{
\appendix
\section{An alernative proof of Propsoition \ref{prop:system-pointwise}}
\label{app:GQL}
In \cite[Theorem 5.1]{wu2023geometric}, Wu and Shu proved that the invariant region for the Euler equations \eqref{eq:Euler} and \eqref{eq:Euler-2} can be equivalently formulated as
\begin{equation}\label{eq:B-GQL}
    B = \{u = (\rho, m^\top, E)^\top: u\cdot n_1 >0, u\cdot n(w_*)>0 \ \forall w_* \in \mathbb{R}^d\}. 
\end{equation}
Here $n_1 = (1,0^\top,0)^\top$ and $n(w_*) = (\frac{w_*^2}{2}, -w_*, 1)^\top$. Note for any $u\in B$, we have
\begin{equation}\label{eq:n1}
    (\pm a_0^{-1} f(u)\cdot v)\cdot n_1 = \pm \frac{w\cdot v}{a_0} u\cdot n_1<u\cdot n_1.
\end{equation}
Moreover, with the arithmetic-geometric inequality and the fact $|v|=1$, we have
\begin{equation}\label{eq:nw}
    \begin{aligned}
        (\pm a_0^{-1}f(u)\cdot v)\cdot n(w_*) =\;& \pm \frac{w\cdot v}{a_0}u\cdot n(w_*) \pm \frac{pv\cdot (w-w_*)}{a_0}\\
        \leq\;& \frac{|w\cdot v|}{a_0} u\cdot n(w_*) + \frac{\sqrt{\frac{\gamma-1}{2}\frac{p}{\rho}}}{a_0}\left(\frac{p}{\gamma-1}+\frac{1}{2}\rho |w-w_*|^2\right)\\
        =\;&\frac{|w\cdot v|+\sqrt{\frac{\gamma-1}{2}\frac{p}{\rho}}}{a_0}u\cdot n(w_*) <u\cdot n(w_*).
    \end{aligned}
\end{equation}
As a result, \eqref{eq:n1} implies that $(u\pm a_0^{-1}f(u)\cdot v)\cdot n_1>0$, and \eqref{eq:nw} implies that $(u\pm a_0^{-1}f(u)\cdot v)\cdot n(w_*)>0$. Therefore, the two conditions in \eqref{eq:B-GQL} are satisfied, which gives $u\pm a_0^{-1}f(u)\cdot v\in B$. }

\begin{revthrblock}
\section{Proof of Theorem \ref{thm:bpcrkdg}}\label{app:proofmain}
In this section, we omit the subscripts $h$ and $K$ for simplicity. 
\subsection{Main technicality: the second-order case}
The main technical challenge in the proof arises from applying the limiter after each RK stage. Before presenting the rigorous proof, we provide a brief explanation using the second-order bound-preserving cRKDG method as an example.

If no limiter is applied, Scheme 1 in \eqref{eq:scheme1} and Scheme 2 in \eqref{eq:scheme2} are equivalent.\\
\textbf{Scheme 1:}
\begin{subequations}\label{eq:scheme1}
\begin{align}
    u^{(2)} &= u^n + \frac{\Delta t}{2} \cG(u^n), \\
    u^{n+1} &= u^n + \Delta t \cF(u^{(2)}). 
    \end{align}
\end{subequations}
\textbf{Scheme 2:}
\begin{subequations}\label{eq:scheme2}
\begin{align}
    u^{(2)} &= u^n + \frac{\Delta t}{2} \cG(u^n), \\ u^{n+1} &= (2-\sqrt{3})u^n -\frac{\sqrt{3}-1}{2}\Delta t \cG(u^n) + (\sqrt{3}-1)u^{(2)} +  \Delta t \cF(u^{(2)}). 
    \end{align}
\end{subequations}
However, with the scaling limiters, denoted by $\cL$ or indicated by a tilde symbol, Scheme 1L in \eqref{eq:scheme1l} and Scheme 2L in \eqref{eq:scheme2l} are no longer equivalent.\\
\textbf{Scheme 1L:}
\begin{subequations}\label{eq:scheme1l}
\begin{alignat}{2}
    u^{(2)} &= \tilde{u}^n + \frac{\Delta t}{2} \cG(\tilde{u}^n), \quad &\tilde{u}^{(2)} &= \cL({u}^{(2)}),\\
    u^{n+1} &= \tilde{u}^n + \Delta t \cF(\tilde{u}^{(2)}),\quad &\tilde{u}^{n+1} &= \cL({u}^{n+1}). \label{eq:scheme1lb}
    \end{alignat}
\end{subequations}
\textbf{Scheme 2L:}
\begin{subequations}\label{eq:scheme2l}
\begin{align}
    u^{(2)} &= \tilde{u}^n + \frac{\Delta t}{2} \cG(\tilde{u}^n), \quad \tilde{u}^{(2)} = \cL({u}^{(2)}),\\ u^{n+1} &= (2-\sqrt{3})\tilde{u}^n -\frac{\sqrt{3}-1}{2}\Delta t \cG(\tilde{u}^n) + (\sqrt{3}-1)\tilde{u}^{(2)} +  \Delta t \cF(\tilde{u}^{(2)}), \quad \tilde{u}^{n+1} = \cL({u}^{n+1}). \label{eq:bp-limit-2}
    \end{align}
\end{subequations}
This is because, in Scheme 1L, \eqref{eq:scheme1lb} can be rewritten as 
\begin{equation}\label{eq:bp-limit}
u^{n+1} = \tilde{u}^n + \Delta t \cF(\tilde{u}^{(2)}) = (2-\sqrt{3})\tilde{u}^n -\frac{\sqrt{3}-1}{2}\Delta t \cG(\tilde{u}^n) + (\sqrt{3}-1){u}^{(2)} +  \Delta t \cF(\tilde{u}^{(2)}). 
\end{equation}
Comparing \eqref{eq:bp-limit-2} and \eqref{eq:bp-limit}, we note that \eqref{eq:bp-limit} contains $u^{(2)}$ instead of $\tilde{u}^{(2)}$ in its third-term. 

Note that Scheme 1L is the suggested implementation; however, it is the bound-preserving property of Scheme 2L that follows directly from Lemma \ref{lem:bp-stage}. 

Despite the difference between \eqref{eq:bp-limit} and \eqref{eq:bp-limit-2}, we can still prove that Scheme 1L is bound-preserving. 
Indeed, since ${u}^{(2)}$ and ${\tilde{u}}^{(2)}$  admit the same cell average, the cell averages of $u^{n+1}$ defined in \eqref{eq:bp-limit} and \eqref{eq:bp-limit-2} are the same. To be more specific, for Scheme 1L, we have
\begin{equation}
\begin{aligned}
\overline{u}^{n+1} &= \overline{\tilde{u}^n + \Delta t \cF(\tilde{u}^{(2)})} \\&=\overline{(2-\sqrt{3})\tilde{u}^n -\frac{\sqrt{3}-1}{2}\Delta t \cG(\tilde{u}^n)} +\overline{(\sqrt{3}-1){u}^{(2)} +  \Delta t \cF(\tilde{u}^{(2)})}\\
&=\overline{(2-\sqrt{3})\tilde{u}^n -\frac{\sqrt{3}-1}{2}\Delta t \cG(\tilde{u}^n)} +\overline{(\sqrt{3}-1)\tilde{u}^{(2)} +  \Delta t \cF(\tilde{u}^{(2)})}\\
&=(2-\sqrt{3})\left(\overline{\tilde{u}^n- \frac{\sqrt{3}+1}{2}\Delta t \cG(\tilde{u}^n)}\right)+(\sqrt{3}-1)\left(\overline{\tilde{u}^{(2)}+ \frac{\sqrt{3}+1}{2}\Delta t \cF(\tilde{u}^{(2)})}\right)
. 
\end{aligned}
\end{equation}
By Lemma~\ref{lem:bp-stage}, we can conclude that Scheme 1L is still bound-preserving. 

\subsection{Proof of the general case in Theorem \ref{thm:bpcrkdg}}
Now we consider the general case in Theorem \ref{thm:bpcrkdg}. With limiters, the cRKDG method can be written as 
\begin{subequations}\label{eq:rkdg-lim}
\begin{align}
u^{(i)} &= \tilde{u}^n +  \Delta t\sum_{j = 1}^{i-1}  a_{ij} \cG\left(\tilde{u}^{(j)}\right), \quad \tilde{u}^{(i)} = \cL(u^{(i)}),  \quad i = 1, 2, \cdots, s,\label{eq-rkdg1-limit}\\
u^{n+1} &= \tilde{u}^n + \Delta t \sum_{i = 1}^s b_i \cF\left(\tilde{u}^{(i)} \right), \quad \tilde{u}^{n+1} = \cL(u^{n+1}).\label{eq:rkdg-2-limit}
\end{align}
\end{subequations}

We first show that all inner stages $\tilde{u}^{(i)}$ preserve the intended bound. 

\begin{PROP}\label{prop:inner-stage}
Let $0\leq \beta_{ij}\leq 1$ and $\sum_{j=1}^{i-1} \beta_{ij} = 1$. Suppose that for the scheme without limiters, or equivalently, when $\cL = \mathcal{I}$ is the identity operator in \eqref{eq:rkdg-lim}, we have 
\begin{equation}\label{eq:convexdecomp-inner}
    u^{(i)} = u^n + \Delta t \sum_{j=1}^{i-1} a_{ij} \cG(u^{(j)}) = \sum_{j = 1}^{i-1} {\beta_{ij}}\left( u^{(j)} \pm \Delta t \mu_{ij}^{\pm\cG} \cG(u^{(j)})\right). 
\end{equation}
Then for the scheme with limiters \eqref{eq:rkdg-lim}, we have
\begin{equation}\label{eq:convexdecomp-inner-lim}
    u^{(i)} = \tilde{u}^n + \Delta t \sum_{j=1}^{i-1} a_{ij} \cG(\tilde{u}^{(j)}) = \sum_{j = 1}^{i-1} {\beta_{ij}}\left( u^{(j)} \pm \Delta t \mu_{ij}^{\pm\cG} \cG(\tilde{u}^{(j)})\right).
\end{equation}
\end{PROP}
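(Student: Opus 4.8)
The plan is to establish Proposition~\ref{prop:inner-stage} by recognizing that the convex decomposition \eqref{eq:convexdecomp-inner} is at bottom an \emph{algebraic} identity among the Runge--Kutta coefficients $a_{ij}$, $\beta_{ij}$ and $\mu_{ij}^{\pm\cG}$, and that this identity is insensitive to which functions are fed into the operator $\cG$. Concretely, the first step is to unpack the hypothesis: substituting the unlimited recursion $u^{(j)}=u^n+\Delta t\sum_{l=1}^{j-1}a_{jl}\cG(u^{(l)})$ (the case $\cL=\mathcal{I}$ of \eqref{eq-rkdg1-limit}) into the bare terms $u^{(j)}$ on the right-hand side of \eqref{eq:convexdecomp-inner}, using $\sum_{j=1}^{i-1}\beta_{ij}=1$, and collecting coefficients, one sees that \eqref{eq:convexdecomp-inner} amounts to the relations
\[
a_{il} \;=\; \sum_{j=l+1}^{i-1}\beta_{ij}\,a_{jl}\;\pm\;\beta_{il}\,\mu_{il}^{\pm\cG},\qquad l=1,\dots,i-1,
\]
(with the evident modification when a stage is used with both signs, and an empty sum for $l=i-1$). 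These are precisely the matching conditions that the ``add and subtract $\alpha u^{(j)}$'' construction of Section~\ref{sec:bound_preserving} imposes --- compare \eqref{eq:rk2-decomp}, \eqref{eq:rk3-decomp-1}, \eqref{eq:rk3-decomp-2} --- and, crucially, they involve only the numbers $a,\beta,\mu^{\pm\cG}$, not the stage functions.

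The second step is to run the same computation in reverse on the limited scheme. Starting from $\sum_{j=1}^{i-1}\beta_{ij}\bigl(u^{(j)}\pm\Delta t\,\mu_{ij}^{\pm\cG}\cG(\tilde u^{(j)})\bigr)$, the right-hand side of \eqref{eq:convexdecomp-inner-lim}, where now $u^{(j)}=\tilde u^n+\Delta t\sum_{l=1}^{j-1}a_{jl}\cG(\tilde u^{(l)})$ denotes the $j$-th \emph{limited-scheme} stage before the limiter is applied, I substitute this expression for the bare terms $u^{(j)}$ only, leaving every $\cG(\tilde u^{(j)})$ untouched. The coefficient of $\tilde u^n$ that results is $\sum_{j}\beta_{ij}=1$, and the coefficient of each $\cG(\tilde u^{(l)})$ is $\sum_{j=l+1}^{i-1}\beta_{ij}a_{jl}\pm\beta_{il}\mu_{il}^{\pm\cG}$, which equals $a_{il}$ by the relations just derived. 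Hence the sum collapses to $\tilde u^n+\Delta t\sum_{l=1}^{i-1}a_{il}\cG(\tilde u^{(l)})=u^{(i)}$, which is \eqref{eq:convexdecomp-inner-lim}.

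The step I expect to require the most care is keeping straight which occurrences of the stage value get limited: in \eqref{eq:convexdecomp-inner-lim} the arguments of $\cG$ are the limited values $\tilde u^{(j)}$, while the bare convex-combination terms must remain the \emph{unlimited} $u^{(j)}$ (equivalently, the next stage value before $\cL$ acts). Replacing those bare $u^{(j)}$ by $\tilde u^{(j)}$ would be incorrect, precisely because $\cL$ is nonlinear and $\tilde u^{(j)}$ no longer satisfies the linear recursion, so the telescoping that forces the $\tilde u^n$-coefficient down to $1$ would fail; the contrast between \eqref{eq:bp-limit} and \eqref{eq:bp-limit-2} is exactly this effect in the second-order case, and I would use it as the guiding example. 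Finally, I would indicate how Proposition~\ref{prop:inner-stage} is used downstream: once \eqref{eq:convexdecomp-inner-lim} holds, one takes cell averages, uses that $\cL$ preserves cell averages to rewrite each summand as $\overline{\tilde u^{(j)}\pm\Delta t\,\mu_{ij}^{\pm\cG}\cG(\tilde u^{(j)})}$, and then applies Lemmas~\ref{prop:weakp-scalar-G} and \ref{prop:weakp-euler-G} together with the inductive hypothesis $\tilde u^{(j)}\in B$ on $S_K$ to conclude $\overline{u^{(i)}}\in B$, whence $\tilde u^{(i)}=\cL(u^{(i)})\in B$ on $S_K$. This is the contribution of the proposition to the proof of Theorem~\ref{thm:bpcrkdg} via Lemma~\ref{lem:bp-stage}, and the final ($\cF$-)stage is handled by the same token.
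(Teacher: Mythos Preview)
Your proposal is correct and follows essentially the same approach as the paper's proof: extract the purely algebraic relations $a_{il}=\sum_{j=l+1}^{i-1}\beta_{ij}a_{jl}\pm\beta_{il}\mu_{il}^{\pm\cG}$ by substituting the unlimited recursion into \eqref{eq:convexdecomp-inner}, then rerun the computation in reverse with the limited recursion \eqref{eq:uj-lim} to obtain \eqref{eq:convexdecomp-inner-lim}. Your emphasis on keeping the bare convex-combination terms as the unlimited $u^{(j)}$ while only the arguments of $\cG$ are limited is exactly the key point the paper highlights, and your downstream explanation matches how the proposition feeds into Theorem~\ref{thm:bpcrkdg}.
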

Let us admit Proposition~\ref{prop:inner-stage} and defer its proof to the next subsection. 

By taking the cell average on both sides of \eqref{eq:convexdecomp-inner-lim} and noting the fact that the limiter doesn't alter cell averages, namely, $\overline{u}^{(j)} = \overline{\tilde{u}}^{(j)}$, we have 
\begin{equation}
        \overline{u}^{(i)} = \sum_{j = 1}^{i-1} {\beta_{ij}}\left( \overline{u}^{(j)} \pm \Delta t \mu_{ij}^{\pm\cG} \overline{\cG(\tilde{u}^{(j)})}\right)  = \sum_{j = 1}^{i-1} {\beta_{ij}}\left( \overline{\tilde{u}^{(j)} \pm \Delta t \mu_{ij}^{\pm\cG} \cG(\tilde{u}^{(j)})}\right).
\end{equation}
Therefore, by Lemma \ref{lem:bp-stage}, we have $\tilde{u}^{(i)}\in B$ under the prescribed conditions. 

Now we prove that $\tilde{u}^{n+1}$ preserves the intended bound. For the final stage, one can similarly show that 
\begin{equation}
\begin{aligned}
    u^{n+1} &= u^n + \Delta t \sum_{i = 1}^s b_i \cF\left(\tilde{u}^{(i)} \right) \\
    &= \sum_{i = 1}^s \beta_i^\cF\left(u^{(i)} + \Delta t \mu_{i}^\cF\cF(\tilde{u}^{(i)})\right) + \sum_{i = 1}^s \beta_i^{\pm\cG}\left(u^{(i)} \pm \Delta t \mu_{i}^{\pm\cG}\cG(\tilde{u}^{(i)})\right),
    \end{aligned}
\end{equation}
if the scheme without limiters admits a similar convex decomposition. Here $0\leq \beta_i^\cF,\beta_i^{\pm\cG} \leq 1$ and $\sum_{i=1}^s \beta_i^\cF + \sum_{i = 1}^s \beta_i^{\pm \cG} = 1$. After taking the cell average on both hand sides,  we have
\begin{equation}
\begin{aligned}
    \overline{u}^{n+1} &=  \sum_{i = 1}^s \beta_i^\cF\left(\overline{u^{(i)} + \Delta t \mu_{i}^\cF\cF(\tilde{u}^{(i)})}\right) + \sum_{i = 1}^s \beta_i^{\pm\cG}\left(\overline{u^{(i)} \pm \Delta t \mu_{i}^{\pm\cG}\cG(\tilde{u}^{(i)})}\right)\\
    &=  \sum_{i = 1}^s \beta_i^\cF\left(\overline{\tilde{u}^{(i)} + \Delta t \mu_{i}^\cF\cF(\tilde{u}^{(i)})}\right) + \sum_{i = 1}^s \beta_i^{\pm\cG}\left(\overline{\tilde{u}^{(i)} \pm \Delta t \mu_{i}^{\pm\cG}\cG(\tilde{u}^{(i)})}\right).
    \end{aligned}
\end{equation}
Once again, by Lemma \ref{lem:bp-stage}, we have $\tilde{u}^{n+1}\in B$ under the prescribed conditions. 

\subsection{Proof of Proposition \ref{prop:inner-stage}}
The key to the proof is the following. To derived \eqref{eq:convexdecomp-inner}, we have to use the identity
\begin{equation}\label{eq:uj}
    u^{(j)} = u^n + \Delta t \sum_{\ell = 1}^{j-1} a_{j\ell} \cG(u^{(\ell)}). 
\end{equation}
But in the scheme with limiters, this relation is replaced by 
\begin{equation}\label{eq:uj-lim}
    u^{(j)} = \tilde{u}^n + \Delta t \sum_{\ell = 1}^{j-1} a_{j\ell} \cG(\tilde{u}^{(\ell)}). 
\end{equation}
Thus we will have $u^{(j)}$ instead of $\tilde{u}^{(j)}$ for the first term inside the parenthesis on the right hand side of \eqref{eq:convexdecomp-inner-lim}. 

The detailed verification is given as follows. For the scheme without limiters, with \eqref{eq:uj} and the fact $\sum_{j=1}^{i-1} \beta_{ij} = 1$, we have 
\begin{equation}\label{eq:derive-aij}
\begin{aligned}
    \sum_{j = 1}^{i-1} {\beta_{ij}}\left( u^{(j)} \pm \Delta t \mu_{ij}^{\pm\cG} \cG(u^{(j)})\right) &= \sum_{j = 1}^{i-1}  {\beta_{ij}}\left(u^n + \Delta t\sum_{\ell = 1}^{j-1}a_{j\ell} \cG(u^{(\ell)})     \right) \pm \Delta t \sum_{j = 1}^{i-1} {\beta_{ij}}  \mu_{ij}^{\pm\cG}  \cG(u^{(j)})\\
    &= u^n + \Delta t\sum_{\ell = 1}^{i-1} \sum_{j = 1}^{\ell-1} {\beta_{i\ell}}a_{\ell j}  \cG(u^{(j)})     \pm \Delta t \sum_{j = 1}^{i-1} {\beta_{ij}}  \mu_{ij}^{\pm\cG}  \cG(u^{(j)})\\
    &= u^n + \Delta t\sum_{j = 1}^{i-2} \sum_{\ell = j+1}^{i-1} {\beta_{i\ell}}a_{\ell j}  \cG(u^{(j)})     \pm \Delta t \sum_{j = 1}^{i-1} {\beta_{ij}}  \mu_{ij}^{\pm\cG}  \cG(u^{(j)})\\
    &= u^n + \Delta t\sum_{j = 1}^{i-1} \left(\pm \beta_{ij}\mu_{ij}^{\pm\cG} + \sum_{\ell = j+1}^{i-1} {\beta_{i\ell}}a_{\ell j}\right)  \cG(u^{(j)}).
    \end{aligned}
\end{equation}
Here we have used the fact $a_{\ell,i-1} = 0$ for $j+1\leq \ell\leq i-1$ in the last equality. Therefore, comparing both hand sides of the second equality in \eqref{eq:convexdecomp-inner}, we have 
\begin{equation}
    a_{ij} = \pm \beta_{ij}\mu_{ij}^{\pm\cG} + \sum_{\ell = 1}^{i-1} {\beta_{i\ell}}a_{\ell j}. 
\end{equation}
As a result, for the scheme with limiters, by reversing the derivation in \eqref{eq:derive-aij} and applying \eqref{eq:uj-lim} in the end, we have
\begin{equation}
\begin{aligned}
    u^{(i)} &= \tilde{u}^n + \Delta t \sum_{j=1}^{i-1} a_{ij} \cG(\tilde{u}^{(j)})\\
    &= \tilde{u}^n + \Delta t\sum_{j = 1}^{i-1} \left(\pm \beta_{ij}\mu_{ij}^{\pm\cG} + \sum_{\ell = j+1}^{i-1} {\beta_{i\ell}}a_{\ell j}\right)  \cG(\tilde{u}^{(j)})\\
    &= \sum_{j = 1}^{i-1} {\beta_{ij}}\left( \left( \tilde{u}^n + \Delta t \sum_{\ell = 1}^{j-1} a_{j\ell} \cG(\tilde{u}^{(\ell)})\right) \pm \Delta t \mu_{ij}^{\pm\cG} \cG(\tilde{u}^{(j)})\right)\\
    &= \sum_{j = 1}^{i-1} {\beta_{ij}}\left( u^{(j)} \pm \Delta t \mu_{ij}^{\pm\cG} \cG(\tilde{u}^{(j)})\right).
\end{aligned}
\end{equation}
This completes the proof of Proposition \ref{prop:inner-stage}.
\end{revthrblock}
\bibliographystyle{abbrv}

\end{document}